\documentclass[a4paper,11pt]{amsart}

\usepackage{mathtools}
\usepackage{color}
\usepackage{comment}
\usepackage{amssymb}
\usepackage{amsfonts}
\usepackage{amscd}

\usepackage{slashed}
\usepackage{pdflscape}
\usepackage{tikz}
\usepackage{enumerate}
\usepackage{multirow}
\usepackage[backref,pagebackref,linkcolor=blue]{hyperref}
\usepackage{mathrsfs}
\usepackage{pifont}

\DeclareRobustCommand{\SkipTocEntry}[5]{}

\setlength{\textwidth}{418pt}
\setlength{\oddsidemargin}{17.5pt}
\setlength{\evensidemargin}{17.5pt}

\newcommand{\bm}[1]{\mbox{\boldmath~$ #1~$}}

\newtheorem{theorem}{Theorem}[section]
\newtheorem{lemma}[theorem]{Lemma}
  
\newtheorem{proposition}[theorem]{Proposition}
\newtheorem{corollary}[theorem]{Corollary}

\theoremstyle{definition}
\newtheorem{definition}[theorem]{Definition}

\theoremstyle{remark}
\newtheorem{remark}[theorem]{Remark}

\newtheorem{problem}[theorem]{Problem}

\usepackage{pdfsync}

\usepackage{graphicx} 
\usepackage{amsmath} 
\usepackage{amsfonts}
\usepackage{amssymb}

\renewcommand{\O}{{\mathcal O}}

\newcommand{\Nint}{{\mathbb N}}

\newcommand{\be}{\begin{equation}}

\newcommand{\ee}{\end{equation}}

\newcommand{\sib}{\bar{\si}}

\newcommand{\cB}{{\mathcal B}}

\newcommand{\Z}{{\mathcal Z}}

\newcommand{\nablab}{\bar\nabla}

\newcommand{\cc}{\boldsymbol{c}}
\newcommand{\dd}{\boldsymbol{d}}

\newcommand{\II}{{\bf\rm I\hspace{-.2mm}I}}
\newcommand{\IIo}{\mathring{{\bf\rm I\hspace{-.2mm} I}}{\hspace{.2mm}}}

\newcommand{\aNd}{\mbox{\boldmath$ \nabla$}\hspace{-.6mm}}

\newcommand{\si}{\sigma}

\newcommand{\ba}{\begin{array}}

\newcommand{\ea}{\end{array}}

\newcommand{\beq}{\begin{eqnarray}}

\newcommand{\eeq}{\end{eqnarray}}

\newtheorem{lm}{lemma}

\newtheorem{thee}{theorem}

\newtheorem{proo}{proposition}

\newtheorem{co}{corollary}

\newtheorem{rem}{remark}

\newtheorem{deff}{definition}

\newcommand{\bd}{\begin{deff}}

\newcommand{\ed}{\end{deff}}

\newcommand{\bl}{\begin{lm}}

\newcommand{\el}{\end{lm}}

\newcommand{\bp}{\begin{proo}}

\newcommand{\ep}{\end{proo}}

\newcommand{\bt}{\begin{thee}}

\newcommand{\et}{\end{thee}}

\newcommand{\bc}{\begin{co}}

\newcommand{\ec}{\end{co}}

\newcommand{\brm}{\begin{rem}}

\newcommand{\erm}{\end{rem}}

\hyphenation{Pa-wel}

\hyphenation{Nu-row-ski}

\hyphenation{And-rzej}

\hyphenation{Traut-man}

\hyphenation{Je-rzy}

\hyphenation{Le-wan-dow-ski}

\hyphenation{Car-tan}

\hyphenation{Car-tan-Pet-rov-Pen-rose}

\hyphenation{Pen-rose}

\hyphenation{or-tho-go-nal}

\hyphenation{comp-lex}

\hyphenation{Pet-rov}

\hyphenation{Euc-lid-ean}

\hyphenation{ge-om-etry}

\hyphenation{Rie-man-nian}

\hyphenation{Ein-stein}

\hyphenation{Ka-te-dra} 

\hyphenation{Me-tod} 

\hyphenation{Ma-te-ma-tycz-nych}

\hyphenation{Fi-zy-ki}

\hyphenation{Uni-wer-sy-tet} 

\hyphenation{War-szaw-ski} 

\hyphenation{War-sza-wa}


\newcommand{\F}{\overline{F}}

\usepackage{t1enc}
\def\frak{\mathfrak}

\def\Cal{\mathcal}

\newcommand{\newc}{\newcommand}

\let\ccdot\cdot
\def\cdot{\hbox to 2.5pt{\hss$\ccdot$\hss}}

\newcommand{\Si}{\Sigma}

\newc{\aR}{\mbox{\boldmath{$ R$}}}
\newc{\aS}{\mbox{\boldmath{$ S$}}}
\newc{\aT}{\mbox{\boldmath{$ T$}}}
\newc{\aW}{\mbox{\boldmath{$ W$}}}
\newcommand{\aX}{\mbox{\boldmath{$ X$}}\hspace{-.2mm}}

\newc{\aD}{\mbox{\boldmath{$ D$}}\hspace{-.2mm}}

\newc{\aK}{\mbox{\boldmath{$ K$}}}
\newc{\aL}{\mbox{\boldmath{$ L$}}}


\newcommand{\ce}{{\Cal E}}

\newcommand{\cO}{{\Cal O}}

\newcommand{\ct}{{\Cal T}}

\newcommand{\hD}{\widehat{D}}

\usepackage{amssymb}
\usepackage{amscd}

\newcommand{\nd}{\nabla}

\newcommand{\Rho}{{\rm P}}
\newcommand{\Up}{\Upsilon}

\newcommand{\Ric}{\operatorname{Ric}}
\newcommand{\Sc}{\operatorname{Sc}}



\newcommand{\bs}[1]{\boldsymbol{#1}}

\newcommand{\cT}{{\mathcal T}}




\let\i=\iota


\newcommand{\nn}[1]{(\ref{#1})}


\newcommand{\sX}{{\mbox{\scriptsize\boldmath{$X$}}}} 

\newcommand{\bg}{\mbox{\boldmath{$ g$}}}



\newcommand{\J}{{\rm J}}

\newc{\obstrn}[2]{B^{#1}_{#2}}



\newcommand{\rpl}                         
{\mbox{$
\begin{picture}(12.7,8)(-.5,-1)
\put(0,0.2){$+$}
\put(4.2,2.8){\oval(8,8)[r]}
\end{picture}$}}

\newcommand{\lpl}                         
{\mbox{$
\begin{picture}(12.7,8)(-.5,-1)
\put(2,0.2){$+$}
\put(6.2,2.8){\oval(8,8)[l]}
\end{picture}$}}

\usepackage{ifthen}

\newc{\tensor}[1]{#1}
\newc{\Mvariable}[1]{\mbox{#1}}
\newc{\down}[1]{{}_{#1}}
\newc{\up}[1]{{}^{#1}}


%
%
%
\newc{\JulyStrut}{\rule{0mm}{6mm}}
\newc{\midtenPan}{\mbox{\sf S}}
\newc{\midten}{\mbox{\sf T}}
\newc{\midtenEi}{\mbox{\sf U}}
\newc{\ATen}{\mbox{\sf E}}
\newc{\BTen}{\mbox{\sf F}}
\newc{\CTen}{\mbox{\sf G}}


\newcommand{\w}{\mbox{\bf w}} 


\def\sideremark#1{\ifvmode\leavevmode\fi\vadjust{\vbox to0pt{\vss
 \hbox to 0pt{\hskip\hsize\hskip1em
 \vbox{\hsize2cm\tiny\raggedright\pretolerance10000
  \noindent #1\hfill}\hss}\vbox to8pt{\vfil}\vss}}}

 %


\numberwithin{equation}{section}



\renewcommand\F{{\mathcal F}}



\newcommand{\B}{\mathcal B}
\newcommand{\Zint}{{\mathbb Z}}

\newcommand{\Pop}{{\sf P}}

\begin{document}

\renewcommand{\today}{}
\title{Conformal hypersurface geometry via a  boundary Loewner--Nirenberg--Yamabe problem}
\author{A. Rod Gover${}^{\mathfrak G}$ \& Andrew Waldron${}^{\mathfrak W}$}

\address{${}^{\mathfrak G}$Department of Mathematics\\
  The University of Auckland\\
  Private Bag 92019\\
  Auckland 1142\\
  New Zealand} \email{r.gover@auckland.ac.nz}
  
  \address{${}^{\mathfrak W}$Department of Mathematics\\
  University of California\\
  Davis, CA95616, USA} \email{wally@math.ucdavis.edu}

\vspace{10pt}

\renewcommand{\arraystretch}{1}

\begin{abstract}

We develop a new approach to the conformal geometry of embedded
hypersurfaces by treating them as conformal infinities of conformally
compact manifolds. This involves the Loewner--Nirenberg-type problem
of finding on the interior a metric that is both conformally compact
and of constant scalar curvature. Our first result is an
asymptotic solution to all orders. This involves log terms. We show
that the coefficient of the first of these is a new hypersurface
conformal invariant which
generalises to higher dimensions the important Willmore
invariant of embedded surfaces. We call this the obstruction density.
For even dimensional hypersurfaces it is a fundamental curvature
invariant.  We make the latter notion precise and show that the
obstruction density and the trace-free second fundamental form are, in
a suitable sense, the only such invariants.  We also show that this
obstruction to smoothness is a scalar density analog of the
Fefferman--Graham obstruction tensor for Poincar\'e--Einstein metrics;
in part this is achieved by exploiting Bernstein--Gel'fand--Gel'fand
machinery.  The solution to the constant scalar curvature problem
provides a smooth hypersurface defining density determined canonically
by the embedding up to the order of the obstruction. We give two key
applications: the construction of conformal hypersurface invariants
and the construction of conformal differential operators.  In
particular we present an infinite family of conformal powers of the
Laplacian determined canonically by the conformal embedding.  In
general these depend non-trivially on the embedding and, in contrast
to Graham--Jennes--Mason--Sparling operators intrinsic to even
dimensional hypersurfaces, exist to all orders. These extrinsic
conformal Laplacian powers determine an explicit holographic formula
for the obstruction density.
\vspace{2cm}

\noindent
{\sf \tiny Keywords:  Bernstein--Gel'fand--Gel'fand  complexes,
conformally compact, conformal geometry, holography, hypersurfaces, Loewner--Nirenberg equation, Willmore equation,  Yamabe problem.}

\end{abstract}
\subjclass[2010]{Primary 53A30, 53A55, 53C21 ; Secondary 53B15}

\maketitle

\pagestyle{myheadings} \markboth{Gover \& Waldron}{Conformal Hypersurface Geometry}

\newpage

\tableofcontents
\newpage

\section{Introduction}


A smoothly embedded codimension-1 submanifold~$\Sigma$ of a smooth,
$d$ dimensional mani\-fold~$M$, is termed a {\em hypersurface}. These
are critically important in geometry and analysis, not least because
manifold and domain boundaries are examples; almost any boundary
problem calls on some aspect of hypersurface geometry. Despite this, a
general theory of natural invariants, differential operators and
functionals for conformal hypersurface geometries is lacking.  We
 show here that a certain boundary problem provides a unified approach
to such problems and a number of new results.

For any continuous geometry, understanding the existence and
construction of local invariants is a critical first step.  Given a
hypersurface~$\Sigma$ in a Riemannian manifold $(M,g)$, local
conformal invariants are the natural scalar or tensor-valued fields
determined by the data~$(M,g,\Sigma)$, which have the additional
property that they are (as densities) unchanged when the metric~$g$ is
replaced by a conformally related metric, that is~$\widehat{g}$ where
$\widehat{g}=\Omega^2g$ for some positive function~$\Omega$. The
precise definitions are given in Sections~\ref{definv-sec} and
\ref{chi}.

For surfaces in Riemannian 3-manifolds an extremely interesting
conformal invariant appears as the left-hand-side of the Willmore
equation,
\begin{equation}\label{Wore}
\bar{\Delta} H +2 H(H^2-{\rm K})=0\, ,
\end{equation}
for an embedded surface~$\Sigma$ in
Euclidean~3-space~$\mathbb{E}^3$~\cite{Willmore}. Here $H$ and~${\rm
  K}$ are, respectively, the mean and Gau\ss\ curvatures,
while~$\bar\Delta$ is the Laplacian induced on~$\Sigma$.  We shall
call this quantity the {\it Willmore invariant}; it is invariant under
M\"obius transformations of the ambient~$\mathbb{E}^3$. A~key feature
is the linearity of the highest order term,~$\bar{\Delta} H$.  This is
important for PDE problems, but also means that the Willmore invariant
should be viewed as a fundamental conformal curvature quantity.  The
Willmore equation and its corresponding Willmore energy functional
play an important {\it r\^ole} in both mathematics and physics (see
{\it e.g.}~\cite{Riviere,Polyakov,Alexakis}). Recently the celebrated
Willmore conjecture~\cite{Willmore} concerning absolute minimizers of
this energy was settled in~\cite{Marques}.  The Willmore energy is
also linked to a holographic notion of physical
observables~\cite{GrahamWitten} and in particular entanglement entropy
~\cite{RyuT1,RyuT2,Gibbons,Us}.  It is clearly important to understand
whether the existence of the Willmore invariant is peculiar to
surfaces, or if there also exist higher dimensional analogs.  We
provide the answer to this question: via a natural PDE problem, we
show the Willmore invariant is the first member of a family of
fundamental curvatures existing in each even hypersurface
dimension. The nature of these, and the way that they arise, strongly
suggests that they will also play an important {\it r\^ole} in both
mathematics and physics.  In addition, we shall find related objects for odd
dimensional hypersurfaces, and there is evidence that these should
also be of interest~\cite{GGHW}.

A powerful approach to the study of conformal geometry is provided by
the Poincar\'e metric of Fefferman--Graham~\cite{FGast}, (which in
part follows the approach to CR geometry developed in \cite{F1} and
\cite{ChengYau}).  This is a construction to study the conformal
geometry of a~$(d-1)$-manifold~$\Sigma$ by recovering it as the
boundary of a conformally compact Einstein~$d$-manifold.  The
Poincar\'e metric of $d$-dimensions is not directly suitable for
capturing general extrinsic geometry of $(d-1)$-dimensional
submanifolds, because the Einstein equation significantly constrains
the allowable conformal geometry of the manifold hosting the
hypersurface. For the boundary, only minimal regularity forces its
embedding to be {\em totally umbilic}~\cite{LeB-Heaven,Goal}, {\it
  i.e.}, everywhere vanishing trace-free second fundamental form. On
the other hand, certain interesting conformal submanifold geometry is
revealed by embedding suitable submanifolds in the boundary of a
Poincar\'e manifold~\cite{GrahamWitten}.

Our approach is to replace the Poincar\'e metric problem with one
directly adapted to the situation of a conformally embedded
hypersurface.  We show that the Loewner--Nirenberg-type problem of
finding, conformally, a negative scalar curvature, conformally compact
metric provides this natural replacement. 
This is a non-compact analog of the well-known Yamabe problem. Our use of this problem is in part inspired by
the~1992 article~\cite{ACF} of Andersson, Chru\'sciel and Friedrich
which (building on the works~\cite{AMO1,AMO2,AMO}) identified a
conformal surface invariant that obstructs smooth boundary asymptotics
for the Loewner-Nirenberg type problem (and also gave some information on
the corresponding obstruction in higher dimensions).
In fact, the
 invariant found in~\cite{ACF} is the same as that arising from the
variation of the Willmore energy; in particular its specialisation to
surfaces in~$\mathbb{E}^3$ agrees with the Willmore invariant appearing in~(\ref{Wore}). For later reference, let us now set up this  problem.

\subsection{A scalar curvature boundary problem} \label{prob} 
Given a $d$-dimensional Riemannian manifold~$(M,g)$ with boundary~$\Sigma:=\partial M$, one may ask whether there is a smooth real-valued
function~$u$ on~$M$ satisfying the following  conditions: 
\begin{enumerate}
\item~$u~$ is a defining function for~$\Sigma$ ({\it {\it i.e.}},~$\Sigma$
  is the zero set of~$u$, and~$\boldsymbol{d} u_x\neq 0$~$\forall x\in \Sigma$);
\item\label{(2)}~$\bar{g}:=u^{-2}g$ has scalar curvature
${\rm Sc}^{\bar{g}}=-d(d-1)$. 
\end{enumerate}
Here~$\boldsymbol{d}$ is the exterior derivative (on functions). We
assume dimension~$d\geq 3$ and all structures are~$C^\infty$.

Assuming~$u>0$ and setting~$u=\rho^{-2/(d-2)}$, part (2) of this
problem is governed by the Yamabe equation
\begin{equation}\label{Yamabe}\Big[\!-4\, \frac{d-1}{d-2}\, \Delta+\Sc\Big]\rho +  d\, (d-1)\,  \rho^{\frac{d+2}{d-2}}=0\, .\end{equation}
The above problem fits nicely into the
framework of conformal geometry as follows: Recall that a conformal structure~$\boldsymbol{c}$
on a manifold is an equivalence class of metrics where the equivalence
relation~$\widehat{g}\sim g$ means that~$\widehat{g}= \Omega^2 g$ for
some positive function~$\Omega$. The line bundle~$(\Lambda^d TM)^2$ is
oriented and, for~$w\in \mathbb{R}$, the bundle of {\it conformal
  densities} of weight~$w$ is denoted~$\ce M[w]$ (or simply $\ce[w]$ if the underlying manifold is clear from context), and is defined to be the
oriented~$\frac{w}{2d}$-root of this. Locally each~$g \in \cc$
determines a volume form and, squaring this, globally a section of
$(\Lambda^d T^*M)^2$. So, on a conformal manifold~$(M,\boldsymbol{c})$ there is a
canonical section~$\bg$ of~$\odot^2T^* M\otimes \ce[2]$ called the
conformal metric. Thus each metric~$g \in \cc$ is naturally in~$1:1$
correspondence with a (strictly) positive section~$\tau$ of~$\ce[1]$
via~$g=\tau^{-2} \bg$. Also, the Levi-Civita connection~$\nabla$ of
$g$ preserves~$\tau$, and hence~$\bg$.  Thus we are led from Equation~\nn{Yamabe} to the
conformally invariant equation on a weight~1 density~$\sigma\in
\Gamma(\ce[1])$
\begin{equation}\label{Ytwo}
S(\sigma):= 
\big(\nabla \si \big)^2 - \frac{2}{d} \si\, \Big(\Delta +\frac{\rm Sc}{2(d-1)}\Big) \si = 1 , 
\end{equation}
where~$\bg$ and its inverse are used to raise and lower indices, 
$\Delta = \bg^{ab}\nabla_a\nabla_b$
and ~${\rm Sc}$ means~$\bg^{bd}R_{ab}{}^a{}_d$, with~$R$ the
Riemann tensor. Now, choosing~$\boldsymbol{c}\ni g=\tau^{-2} \bg$, 
Equation~(\ref{Ytwo}) is the
 PDE governing the function~$u=\si/\tau$ solving part~(\ref{(2)}) of the problem. Setting aside boundary aspects, this is exactly the Yamabe equation~\eqref{Yamabe} above. 

The critical point is that, in contrast to~\eqref{Yamabe},
Equation~\eqref{Ytwo} is well-adapted to the boundary problem since $u$ (or equivalently $\sigma$) is the variable defining the boundary.
Let us write $\Sigma:=\Z(\sigma)$ for the zero locus of $\sigma$.
Since~$u$ is a defining function, this implies that~$\si$ is a {\em defining density} for~$\Sigma$, meaning that it is a
section of~$\ce[1]$ with zero locus~$\Sigma$  and~$\nabla \si_x\neq 0$~$\forall x\in \Sigma$.  For our purposes, we only
need to treat the problem formally (so it applies to any
hypersurface) and it may thus be stated as follows:

\begin{problem}\label{conffirststep} Let~$\Sigma$ be an embedded
hypersurface in a conformal manifold~$(M,\boldsymbol{c})$ with dimension~$d\geq 3$.
Find a smooth defining density~$\bar \sigma$ such that
\begin{equation*}
S(\bar{\si})=1 + \bar{\sigma}^{\ell} A_\ell\, ,
\end{equation*}
for some~$A_\ell\in \Gamma(\ce[-\ell])$,
where~$\ell \in\mathbb{N}\cup\infty$ is as high as possible. 
\end{problem}
\noindent Here and elsewhere $\Gamma({\mathcal V})$ indicates the space
of smooth, meaning $C^\infty$, sections of a given bundle
$\mathcal{V}$.

\subsection{The main results}\label{results}
Our first main result is an asymptotic solution to the singular Yamabe
Problem~\ref{conffirststep}. This is given in Theorem~\ref{obstr},
which states that in solving the equivalent Equation\nn{Ytwo} we can smoothly achieve
\begin{equation}\label{Bdef}
S(\si)=1+\si^d B \quad \mbox{equivalently}\quad \Sc^{\si^{-2}\boldsymbol{g}}=-d(d-1)+ \si^d C,
\end{equation}
for a certainly smooth conformal density $B$ (and $C=-d(d-1)B$).
Proposition~\ref{prod} and Proposition~\ref{log}  provide the
explict recursive formulae that solve the problem, including log terms.
These simple formul\ae\ follow after recasting Problem~\ref{conffirststep}  in terms of tractor calculus on $(M,\boldsymbol{c})$; in
particular as the almost scalar constant (ASC) problem stated
in~\cite{Goal}.  The equivalent tractor problem is given in
Problem~\ref{I2-prob}, and provides an effective simplification.
The
approach demonstrates that surprisngly, although the problem is non-linear,
 a fundamental $\frak{sl}(2)$ structure inherent in the
geometry \cite{GW} (see Section \ref{sl2structure}) may be applied.

The next main result concerns the obstruction to smoothly solving
Problem~\ref{conffirststep} beyond order $\ell=d$:
\begin{theorem}\label{main-Th-var} There is an obstruction to  solving
Problem~\ref{conffirststep} smoothly to all orders and this 
 is a conformal density $\cB\in \Gamma(\ce\Sigma[-d])$ determined
entirely by the data $(M,\cc,\Sigma)$, of the conformal embedding of
the boundary.
That is a smooth
formal solution to the singular Yamabe Problem~\ref{conffirststep}
exists if and only if the hypersurface invariant $\cB$ is zero.
\end{theorem}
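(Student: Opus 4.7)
My plan is to set $\cB := B|_\Sigma$, where $B$ is the smooth density produced by Theorem~\ref{obstr} in $S(\bar\sigma)=1+\bar\sigma^d B$, and then to prove both that this definition is independent of all choices (so $\cB$ is intrinsic to $(M,\cc,\Sigma)$) and that $\cB=0$ characterizes the existence of a smooth formal solution to all orders. The entire argument is driven by a single linearization formula.

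The central tool is obtained by a direct variational calculation: for any smooth density $\phi$ of appropriate weight and any integer $k\ge 1$,
\begin{equation*}
S\bigl(\bar\sigma+\bar\sigma^k\phi\bigr) \;=\; S(\bar\sigma) \;+\; \frac{2\,k\,(d-k+1)}{d}\,|\nabla\bar\sigma|^2\,\phi\,\bar\sigma^{k-1} \;+\; O(\bar\sigma^k)\, .
\end{equation*}
The key numerical fact is that the coefficient $k(d-k+1)$ is nonzero for every positive integer $k\neq d+1$. Since $|\nabla\bar\sigma|^2=1$ along $\Sigma$ (an immediate consequence of $S(\bar\sigma)=1+O(\bar\sigma^d)$, as $\bar\sigma$ vanishes there), this coefficient being nonzero at a given $k$ means that an arbitrary residual at order $\bar\sigma^{k-1}$ can be cancelled by a suitable choice of $\phi$, while its vanishing at $k=d+1$ means shifts of the form $\bar\sigma\to\bar\sigma+\bar\sigma^{d+1}\phi$ alter $S$ only at order $\bar\sigma^{d+1}$ or higher. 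I would carry out this calculation via the tractor reformulation of Problem~\ref{I2-prob} using the $\frak{sl}(2)$ structure of Section~\ref{sl2structure}, which keeps conformal invariance manifest and makes the coefficient $k(d-k+1)$ emerge algebraically.

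The theorem then follows in three steps. First, well-definedness of $\cB$: if $\bar\sigma_1,\bar\sigma_2$ are two smooth defining densities solving Problem~\ref{conffirststep} to order $d$, the instances of the formula for $1\le k\le d$ force $\bar\sigma_2=\bar\sigma_1+O(\bar\sigma_1^{d+1})$, and the $k=d+1$ instance then yields $B_1|_\Sigma=B_2|_\Sigma$; conformal invariance of $\cB$ is automatic, because $S$, $\bg$, and the defining-density notion are all objects of the conformal category. Second, necessity: if any smooth $\bar\sigma'$ solves the problem to order $d+1$, comparing it against the $\bar\sigma$ of Theorem~\ref{obstr} by the same argument forces $B|_\Sigma=0$. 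Third, sufficiency: if $\cB=0$ then $B=\bar\sigma\tilde B$ for some smooth $\tilde B$, so $S(\bar\sigma)=1+\bar\sigma^{d+1}\tilde B$, and the formula with $k=d+2,d+3,\ldots$ allows iterative solution for smooth corrections at all higher orders, with Borel summation then delivering a smooth defining density solving Problem~\ref{conffirststep} to all orders. This last recursion is precisely the one established in Propositions~\ref{prod} and \ref{log} operating past the obstruction order, with the log-term branch never triggered.

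The most delicate part is the rigidity step: verifying that two order-$d$ solutions really do agree to order $d+1$, so that no stray degrees of freedom can perturb $B|_\Sigma$. This rests on the non-vanishing of $k(d-k+1)$ for $1\le k\le d$, which provides unique solvability of the cancellation problem at each intermediate order. Once this rigidity is in place, both the intrinsicality of $\cB$ and the ``if and only if'' claim follow without further work from the recursion already used to prove Theorem~\ref{obstr}.
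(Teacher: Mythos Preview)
Your proposal is correct and follows essentially the same route as the paper: the paper's Lemma~\ref{Isquare} is precisely your linearization formula (in the paper's indexing the perturbation is $\sigma^{k+1}f_k$ and the coefficient is $(k+1)(d-k)$, matching your $k(d-k+1)$ after the shift $k\mapsto k+1$), Theorem~\ref{main-Th} is your rigidity/well-definedness step, and the remark following Proposition~\ref{log} is your sufficiency step. One cosmetic point: the factor that emerges from the tractor/$\frak{sl}(2)$ computation is $I_{\bar\sigma}^2$ rather than $|\nabla\bar\sigma|^2$, though both equal $1$ to the order that matters so the argument is unaffected.
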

\noindent This theorem is proved as the equivalent statement
Theorem~\ref{main-Th}.  From that we see that the {\it (ASC)
  obstruction density} $\cB$ arises as $B|_\Sigma$, where $B$ is as in
\nn{Bdef}.  We later prove that the density~$\cB$ is natural in the
sense that, in a scale, it may be given by an expression polynomial in
the hypersurface conormal, ambient Riemann curvature and Levi-Civita
covariant derivatives thereof, see Theorem \ref{Bnatthm}. 
So it is conformal invariant of the hypersurface; hypersurface invariants are defined in Definitions \ref{R-invtdef} and \ref{chi-def} of Sections \ref{definv-sec} and \ref{cyp}. If~$\cB$ is zero for a given
hypersurface, then any smooth formal solution to all orders
depends on the choice of a smooth conformal density
in~$\Gamma(\ce\Sigma[-d])$.

Theorem~\ref{leading} establishes that for 
hypersurfaces of even dimension $d-1$ the obstruction density takes the form 
$$ 
\bar{\Delta}^{\frac{d-1}{2}} H + \mbox{\rm lower order terms},
$$ 
generalising the Willmore invariant $\bar{\Delta}H +\mbox{\rm lower
  order terms} $, {\it cf.} Equation \nn{Wore}. Then we prove the following (see page \pageref{ui}): 
\begin{theorem} \label{uniqui} 
The obstruction density is a fundamental curvature invariant of 
even dimensional conformal hypersurfaces.
For hypersurfaces of dimension 3 or greater, the trace-free second
fundamental form $\IIo$ and the obstruction density $\cB$ are the only
fundamental conformal invariants of a hypersurface (up to the addition
of lower order terms) taking values in an irreducible bundle.
\end{theorem}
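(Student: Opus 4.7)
The plan is to establish both parts of the theorem by using the canonical defining density of Theorem~\ref{obstr} to place the conformal embedding into a normal form, and then to classify the fundamental pieces via Bernstein--Gel'fand--Gel'fand techniques combined with the naturality of $\cB$ asserted in Theorem~\ref{Bnatthm}.

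For the first claim --- that $\cB$ is a fundamental curvature invariant on even-dimensional hypersurfaces --- the key ingredient is the explicit form established in Theorem~\ref{leading},
\begin{equation*}
\cB = \bar{\Delta}^{(d-1)/2} H + \text{lower order terms},
\end{equation*}
together with the naturality statement. An invariant taking values in an irreducible bundle qualifies as fundamental precisely when it is not a differential-polynomial consequence, modulo lower order terms, of invariants of strictly lower order built from $\IIo$, the ambient Weyl curvature, and their Levi-Civita derivatives and contractions. Any such consequence with target $\ce\Sigma[-d]$ would necessarily be at least quadratic in the top-order derivative contribution, since $\IIo$ itself only contributes linearly in $\nabla^{\,k} H$ after taking a trace; but the leading term of $\cB$ is linear in the top derivative of the mean curvature, which rules out any such expression.

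For the classification, fix the canonical defining density $\bar{\sigma}$ provided by Theorem~\ref{obstr}, which determines the entire embedding up to the $\sigma^{d}$-coefficient, that is precisely up to $\cB$. In this scale, the transverse Taylor expansion of the ambient metric records all hypersurface-intrinsic data, and each coefficient up to order $d-1$ is a polynomial differential expression in $\IIo$, the ambient Weyl curvature restricted to $\Sigma$, and their tangential and transverse derivatives. Decomposing this jet data into irreducible representations of the conformal isotropy at a hypersurface point, and appealing to the BGG description of first operators on the tractor bundles adapted to the conformal hypersurface geometry developed earlier in the paper, one finds that the only jet levels admitting a genuinely new irreducible component, not expressible as a differential consequence of lower-order data, are order $1$, where $\IIo$ appears, and order $d-1$, where a single irreducible scalar density of weight $-d$ emerges. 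By naturality this density must agree with $\cB$ up to a universal nonzero constant and lower-order corrections, giving the uniqueness.

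The main obstacle is the exhaustiveness of this enumeration: one must rule out any other irreducible component --- in particular, tensor-valued pieces such as higher trace-free symmetric tensors constructed from covariant derivatives of $\IIo$ crossed with powers of the Weyl curvature --- arising genuinely new at some jet order. The technical heart is a Kostant-type Lie-algebra cohomology computation for the relevant parabolic subalgebras governing the conformal hypersurface geometry, confirming that below order $d$ only the class generating $\IIo$ appears, and at order $d$ only the scalar density class generating $\cB$ appears. Once this cohomological enumeration is in place, combining it with the normal form argument and naturality yields the stated uniqueness up to lower order terms.
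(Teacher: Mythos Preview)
Your proposal rests on a different notion of ``fundamental'' from the one the paper actually uses, and this is the main gap. In Section~\ref{BGG} a hypersurface invariant is defined to be fundamental precisely when its \emph{linearisation}, with respect to variation of the embedding about the round sphere in Euclidean space, is non-trivial. With this definition the proof is almost immediate: the linearisation of any hypersurface invariant taking values in an irreducible bundle is a conformally invariant linear differential operator on the sphere with domain $\ce[1]$ (the bundle carrying the variation $\dot\sigma$). The classification of such operators on the conformal sphere is known from the BGG picture: the only non-trivial ones with domain $\ce[1]$ and irreducible target are the almost-Einstein operator ${\sf aE}:\ce[1]\to\ce^{1,1}[1]$ (the linearisation of $\IIo$) and, in even hypersurface dimension only, the long operator $\Pop_{n+2}:\ce[1]\to\ce[-n-1]$ (which Proposition~\ref{Bnature} identifies as the linearisation of $\cB$). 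That is the whole argument.

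Your first paragraph, arguing that $\cB$ cannot be a polynomial in lower-order invariants because its top term is linear in $\bar\Delta^{(d-1)/2}H$, is aimed at a different (and stronger) statement than what is being claimed; with the paper's definition, fundamentality of $\cB$ is exactly the $d-1$ even case of Proposition~\ref{Bnature}. Your second and third paragraphs propose a normal-form plus Kostant-cohomology enumeration of jet components. This is both unnecessary and not really well-posed here: the conformal hypersurface embedding is not itself a parabolic geometry in the usual sense, so there is no off-the-shelf Kostant computation that classifies its invariants, and you have not indicated which parabolic or which cohomology you would compute. The actual leverage comes from the much softer observation that linearising on the flat model already reduces the question to the standard classification of invariant operators from $\ce[1]$.
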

\noindent The notion of {\em fundamental curvature quantity} used in
the Theorem here is defined in Section \ref{BGG}.  Informally it means a
conformal invariant that has a non-trivial linearisation with respect
to embedding variation of flat structures, and in general cannot be
constructed in a standard way from simpler invariants, see
Remark~\ref{properties}.  This property and the leading order
behaviour show that on even dimensional hypersurfaces,~$\cB$
generalises the Willmore invariant.  We also show in Section~\ref{BGG}
that the obstruction density is a scalar density analog of the
Fefferman--Graham obstruction tensor~\cite{FGrnew}.  This uses the
classification of linear conformally invariant operators and the
appropriate Bernstein--Gel'fand--Gel'fand (BGG) complexes.
Proposition~\ref{Willmore}  gives
explicit formul\ae\ for the obstruction density for hypersurfaces of
dimensions two and three (with~$(M,\cc)$ conformally flat for the
latter case). In two dimensions this gives exactly the Willmore
invariant.

The uniqueness  of the
solution~$\bar\si$ to Problem~\ref{conffirststep} means that, in a
conformal manifold~$(M,\cc)$, a hypersurface~$\Sigma$ determines a
corresponding distinguished {\em conformal unit defining density}, as
defined in Definition~\ref{cunit}; this is uniquely determined up to
the order asserted in Theorem \ref{obstr}.  This gives an interesting
way to construct conformal hypersurface invariants: Consider any
conformal invariant~$U$ on~$M$, that couples the data of the jets of
the conformal structure~$(M,\cc)$ to the jets of the
section~$\bar\si$.  Along~$\Sigma$, this determines an invariant
of~$(M,\cc,\Sigma)$ whenever~$U$ involves there no more than
the~$d$-jet of~$\bar\si$. This is treated in Section~\ref{invts}.

 Importantly, the stated uniqueness of~$\bar \sigma$ also means that
 we can construct the basic invariant differential operators
 determined naturally by the data $(M,\cc,\Sigma)$.  In particular we
 give a family of conformally invariant, hypersurface Laplacian-power
 type operators depending on the extrinsic geometry and termed {\it
   extrinsic conformal Laplacians}:
\begin{theorem}\label{ecL-var} Consider a hypersurface $\Sigma$ embedded in a conformal manifold $(M,\cc)$. The data of this embedding determines canonically, 
for each $k\in 2\mathbb{Z}_{>0}$,  a natural
 conformally invariant differential  operator
$$\Pop_k:\Gamma\big(\ct^\Phi M\big[\frac{k-d+1}{2}\big]\big)\Big|_\Sigma\rightarrow \Gamma\big(\ct^\Phi M\big[\frac{-k-d+1}{2}\big]\big)\Big|_\Sigma\, ,$$
with leading term $\bar{\Delta}^{\frac{k}{2}}$.  
\end{theorem}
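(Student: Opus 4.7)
The strategy is to leverage Theorem~\ref{obstr} to use the canonical conformal unit defining density $\bar\sigma$, determined to order $\bar\sigma^{d+1}$ purely by $(M,\cc,\Sigma)$, as an invariant scale adapted to the embedding. From $\bar\sigma$ I would form the scale tractor $I^A := \tfrac{1}{d}D^A \bar\sigma$, where $D^A$ is the Thomas-D operator of conformal tractor calculus; by construction $I^A I_A = S(\bar\sigma) \equiv 1 \bmod \bar\sigma^d$, so $I^A$ is an almost-unit tractor. The triple $(I\cdot D, \bar\sigma, \text{weight operator})$ then satisfies the $\frak{sl}(2)$-commutation relations of Section~\ref{sl2structure}, which were already exploited in the proofs summarised above.

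I would then define $\Pop_k$ by a $k/2$-fold iterated application of the Laplace-Robin operator $\widehat{\Delta} := I\cdot D = I^A D_A$, with weight-dependent scalar normalising factors inserted to secure the correct source and target weights stated in the theorem, followed by pullback along $\Sigma \hookrightarrow M$:
\begin{equation*}
\Pop_k f \;:=\; \bigl[\,c_{k/2}\,\widehat{\Delta}\;\cdots\; c_1\,\widehat{\Delta}\, f\bigr]\Big|_\Sigma\, .
\end{equation*}
Each factor $\widehat{\Delta}$ is conformally invariant (both $\bar\sigma$ and $D^A$ are), hence so is the composition. The principal symbol collapses to the intrinsic hypersurface Laplacian power $\bar\Delta^{k/2}$ because $\widehat{\Delta}$ has leading term proportional to the ambient conformal Laplacian on appropriate weights, $I \cdot I = 1$ along $\Sigma$ forces the normal component of $I$ to restrict to the unit conormal, and the ambient Laplacian applied tangentially on $\Sigma$ agrees with $\bar\Delta$ up to lower-order second-fundamental-form corrections.

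Two properties remain: \emph{(i)} $\Pop_k f$ should depend only on $f$ along $\Sigma$, i.e.\ be independent of the choice of smooth extension of $f$ off $\Sigma$; and \emph{(ii)} $\Pop_k$ should not depend on the ambiguity in $\bar\sigma$ at order $\bar\sigma^{d+1}$ governed by the obstruction density $\cB$ of Theorem~\ref{main-Th-var}. Both follow from the same mechanism: by the $\frak{sl}(2)$-commutation relations, any replacement $\bar\sigma \mapsto \bar\sigma + \bar\sigma^{d+1} T$, or any alteration of the extension of $f$, changes the iterated composition only by expressions proportional to positive powers of $\bar\sigma$, and these vanish upon restriction to $\Sigma = \Z(\bar\sigma)$.

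The main obstacle is the uniformity in $k$ of the last step. One must verify, at every order of iteration, that the combinatorial coefficients and curvature corrections produced by commuting $\widehat{\Delta}$ past $\bar\sigma$ and past earlier factors of $\widehat{\Delta}$ assemble into a bona fide differential operator of the claimed order with leading symbol $\bar\Delta^{k/2}$, and moreover that the higher-order indeterminacy of $\bar\sigma$ does not propagate through the iteration. This amounts to a careful induction on $k/2$ using the algebraic identities of the $\frak{sl}(2)$-structure developed in Section~\ref{sl2structure}; crucially, the absence of a critical weight in this extrinsic construction (in contrast with the intrinsic GJMS situation) is what permits the construction to persist for \emph{all} even $k$ rather than being cut off.
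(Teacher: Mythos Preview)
Your construction via iterated $I\!\cdot\! D$ is exactly the paper's Theorem~\ref{ecL}, and your argument for tangentiality (point (i)) is correct for all $k$: the $\frak{sl}(2)$ identity $[x,y^k]=ky^{k-1}(h-k+1)$ vanishes on the relevant weight, so $y^k$ is tangential regardless of $k$.

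The gap is in point (ii). Replacing $\bar\sigma$ by $\bar\sigma+\bar\sigma^{d+1}A$ alters the scale tractor---and hence each factor $y$---by a term of the form $\bar\sigma^{d-1}E$. In the $k$-fold composition, the worst error term is $y^{k-1}\circ(\bar\sigma^{d-1}E)$; commuting the $k-1$ copies of $y$ past $\bar\sigma^{d-1}$ via $[x^m,y]=mx^{m-1}(h+m-1)$ can strip off up to $k-1$ powers of $\bar\sigma$, leaving a residue of order $\bar\sigma^{d-k}$. This is a positive power (hence vanishes on $\Sigma$) only when $k\leq d-1$. For $k\geq d$ your iterated operator genuinely depends on the $\O(\bar\sigma^{d+1})$ ambiguity in $\bar\sigma$ and is therefore \emph{not} determined by $(M,\cc,\Sigma)$ alone. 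Your final remark about ``absence of a critical weight'' is a misdiagnosis: the cutoff at $k=d-1$ comes from the finite order to which $\bar\sigma$ is canonical, not from a weight obstruction.

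The paper handles $k>d-1$ by an entirely different construction (Theorem~\ref{alllaps}): one takes the top canonical operator $\Pop_{d-1}$ and sandwiches it between tangential Thomas-D operators,
\[
\Pop_k \;=\; D^{\rm T}_{A_1}\cdots D^{\rm T}_{A_\ell}\,\Pop_{d-1}\,D^{{\rm T}\,A_\ell}\cdots D^{{\rm T}\,A_1},\qquad \ell=\tfrac{k-d+1}{2}.
\]
Since each $D^{\rm T}$ is already tangential (depending only on boundary data), no further ambiguity from $\bar\sigma$ is introduced, and one checks the leading symbol separately using the flat-model classification of invariant operators. This second step is what your induction sketch is missing.
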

\noindent This is proved in Theorem \ref{ecL} and Theorem
\ref{alllaps}. In fact, as we indicate in the first of these, certain
linear operators also arise for $k$ odd.  In one sense the $ \Pop_k$
are analogs of the Graham--Jennes--Mason--Sparling (GJMS)
operators~\cite{GJMS}, but in contrast exist to all orders in both
dimension parities and depend non-trivially on the conformal
embedding.  These operators are based on the tangential operators
found in~\cite{GW}, and admit simple holographic formul\ae--see
Theorem~\ref{ecL}.  
 An important application of the extrinsic conformal Laplacians is to the
 provision of a (holographic) formula for the obstruction
 density. This is given in Theorem~\ref{holoB}.

The first draft of this article also studied energy
functions for the obstruction density and, based on evidence gathered at linear order,  posed the question of whether
such action functionals exist in all dimensions 
(a detailed discussion of integrated invariants may now be found in~\cite{GW-willII,volume},
explicit formul\ae\ in ambient dimensions 3 and 4 and a novel variational calculus 
are given in~\cite{GGHW}).
Since then,
Graham  has answered this question by showing that in all
dimensions the obstruction density we define arises as the gradient,
with respect to variations of the conformal embedding, of an integral that
gives the anomaly term in a renormalised volume expansion for singular
Yamabe metrics solving Problem~\ref{conffirststep} below, see
\cite{Gra}. This conclusion was subsequently also recovered as
a special case of a broad framework for renormalised volume problems developed in \cite{volume}, where
it is moreover shown that the given anomaly/energy is an integral of
an extrinsically coupled $Q$-curvature.

\subsection{Structure of the article}\label{str}

In Section~\ref{Riemann} below, we show that the classical problem of
constructing Riemannian hypersurface invariants can be treated
holographically. This provides the idea to be generalised in the more
difficult conformal case, and also a tool for calculating in the
treatment of the latter.  A review of basic conformal geometry and
tractor calculus is given in Section~\ref{ASC-sec}, with key
identities derived in Section~\ref{idents}.  Conformal hypersurfaces
are treated in Section 4.  Section~\ref{lin} deals with the linearised
obstruction density.

Theorem \ref{obstr} allows us proliferate natural invariants of
conformal hypersurfaces; this is discussed in Section~\ref{invts}
where various explicit examples are also given.  A recursive procedure
for computing conformal hypersurface invariants is given in
Theorem~\ref{main-calc}.  


\subsection{Notation}
We will primarily employ an abstract index notation in the spirit of~\cite{ot}. Occasionally a mixed notation such as $R(\hat n,b,c,\hat n)=R_{abcd} \hat n^a \hat n^d$, where $\hat n$ is a vector field and $b,c$ are indices, is propitious.

\addtocontents{toc}{\SkipTocEntry}
\subsection*{Acknowledgements}
Prior to this work, A.R.G. had discussions related to this problem
with F.\ Marques and then P.\ Albin and R.\ Mazzeo. We are indebted
for the insights so gained. Both authors would also like to thank
C.R. Graham and Y. Vyatkin for helpful comments.  A.W. thanks
R. Bonezzi, M. Halbasch, M. Glaros for discussions.  The authors
gratefully acknowledge support from the Royal Society of New Zealand
via Marsden Grant 13-UOA-018 and the UCMEXUS-CONACYT grant CN-12-564.
A.W. thanks for their warm hospitality the University of Auckland and
the Harvard University Center for the Fundamental Laws of
Nature. A.W. was also supported by a Simons Foundation Collaboration
Grant for Mathematicians ID 317562.

\section{Riemannian hypersurfaces}\label{Riemann}

Throughout this section we shall consider a smooth ($C^\infty$) {\it hypersurface}
$\Sigma$ in a Riemannian manifold~$(M^d,g)$, meaning a smoothly embedded codimension~1 submanifold. Treating Riemannian hypersurfaces  has several purposes.
It enables us to set up the basic structures for later use. We also use this  opportunity to 
 show that
such a hypersurface determines a canonical defining function. This provides  a method for constructing Riemannian hypersurface  invariants and 
 thus  gives  a  simple analog of the conformal
ideas studied in the subsequent sections.

\subsection{Riemannian notation and conventions}\label{R-conv}

We work on manifolds~$M$ of dimension~$d\geq 3$, unless stated
otherwise. For simplicity we assume that this is connected and
orientable. Also for simplicity we will consider only metrics of
Riemannian signature, however nearly all results can be trivially
extended to metrics of any signature~$(p,q)$.  Given a metric $g$ we
write $\nabla_a$ to denote the corresponding Levi-Civita connection.
Then the Riemann curvature tensor~$R$ is given~by
\[
R(u,v)w=\nd_{u}\nd_{v}w-\nd_{v}\nd_{u}w-\nd_{[u,v]}w,
\]
where~$u$,~$v$, and~$w$ are arbitrary vector fields. In an abstract
index notation ({\it cf.}~\cite{ot})~$R$ is denoted by~$R_{ab}{}^{c}{}_d$,
and~$R(u,v)w$ is~$u^av^bw^d R_{ab}{}^{c}{}_d$.
This can be decomposed into the totally trace-free {\em Weyl curvature}
$W_{abcd}$ and the symmetric {\em
Schouten tensor}~$\Rho_{ab}$ according~to
\begin{equation}\label{Rsplit}
R_{abcd}=W_{abcd}+2g_{c[a}\Rho_{b]d}+2g_{d[b}\Rho_{a]c},
\end{equation}
where~$[\cdots]$ indicates  antisymmetrisation over the enclosed
indices. 
Thus~$\Rho_{ab}$ is a trace modification of the Ricci tensor 
${\rm Ric}_{ab}=R_{ca}{}^c{}_b$:
$$
\Ric_{ab}=(d-2)\Rho_{ab}+ \J g_{ab}\, , \quad \quad \J:=\Rho^a_{a}\, .
$$ 
The scalar curvature is~$\Sc=g^{ab}\Ric_{ab}$ so~$\J=\Sc/(2(d-1))$;
in two dimensions we define ~$\J:=\frac12\Sc$. To simplify notation, we will often write~$u.v$ to  denote $g(u,v)$, for vectors~$u$ and $v$. 

\subsection{Hypersurfaces in Riemannian manifolds}\label{R-hyp}
Given a hypersurface~$\Sigma$, a function $s\in C^\infty M$ will be called a {\it defining function} for $\Sigma$ if $\Sigma=\Z(s)$
(the zero locus of $s$) and the exterior derivative $\dd s$ 
is nowhere vanishing along~$\Sigma$.
Defining functions always exist, at least locally.
We shall use the notation~$n_{s}$ (or simply~$n$ if~$s$ is understood)
for~$\dd s$.
\begin{definition}
A defining function~$s$ for~$\Sigma$ is said to be {\em normal}\, if 
\begin{equation}\label{def-norm}
|n_s|^2=1+s A\, ,
\end{equation} 
where~$A\in C^\infty(M)$.
\end{definition}
To see that normal defining functions always exist locally, let~$s$ be
a defining function  for a
smooth hypersurface~$\Sigma$ in a Riemannian manifold~$(M^d,g)$. Then
there is a neighbourhood~$U$ of~$\Sigma$ on which the function~${\bar
  s}:=s/|\dd s|_g$ is a defining function, and this is normal.

Now we consider further ``improving'' the defining function:
\begin{problem}\label{Riemannsfirststep}
Given~$\Sigma$, a smooth hypersurface in a Riemannian manifold~$(M,g)$
find a defining function~$\bar s$ such that~$n_{\bar s}$ obeys
\begin{equation}\label{ellp1}
|n_{\bar s}|^2=1+ s^{\ell+1} A\, ,
\end{equation}
for some~$A\in C^\infty(M)$ and~$\ell\in\Nint\cup\infty$ as high as possible.
\end{problem}

\begin{remark} In fact the equation~$|n_{\bar s}|=1$ may be solved exactly in a 
neighbourhood of~$\Sigma$ (this is  part of the construction of Gaussian normal coordinates~\cite{Wald} in which $\bar s$ is the distance to the hypersurface) but we wish to
illustrate an inductive approach to the problem as stated. The point
being that an adaptation of this idea then treats  Problem~\ref{conffirststep}.  Furthermore, the canonical defining function
obtained by this process is very useful for calculations,
{\it cf.}~\cite{GW-willII}. 
\end{remark}

Next some points of notation. Equalities such as~$E=F+s^k G$ for~$G$
smooth, will be denoted~$E=F+\O({s^k})$, and we may write $E\stackrel\Sigma = F$ if $k\in {\mathbb Z}_{\geq 1}$. 
We will also use this notation when either $E$ or $F$ is defined only along~$\Sigma$ and agrees with the restriction of the other side to~$\Sigma$.
Norms squared of vector or 1-form fields will often be denoted simply by squares, {\it i.e.},  
$v^2:=|v|^2$. Hence our problem is given~$s$ such that~$(\dd
s)^2=1+\O(s)$, find~$\bar s(s)$ such that~$(\dd \bar
s)^2=1+\O(s^{\ell+1})$.

Now the key point here is that Problem~\ref{Riemannsfirststep} can 
be solved, to arbitrarily high order by an explicit recursive formula. 
This is based on the following Lemma.
\begin{lemma}\label{recur}
Suppose the  defining function~$s\in C^\infty (M)$ satisfies
$$
n^2=1+\O(s^{\ell})\, , \quad\ell\in\Zint_{\geq 1},
$$
where~$n:=\dd s$.
Then 
\begin{equation}\label{sbar}
\bar s=s\,  \Big[1-\frac 1{2}\frac{n^2-1}{\ell+1}\Big]
\end{equation}
solves~\nn{ellp1}.
\end{lemma}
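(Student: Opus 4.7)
The plan is a direct computation. Since $n^2 = 1 + \O(s^\ell)$, I would write $n^2 - 1 = s^\ell B$ for some smooth function $B$, which gives
\[
\bar s = s - \frac{s^{\ell+1} B}{2(\ell+1)}\,.
\]
First I would verify that $\bar s$ is still a defining function for $\Sigma$: it vanishes on $\Z(s)=\Sigma$, and along $\Sigma$ the second summand and its first derivatives in the direction of $\dd s$ all vanish (since $\ell\geq 1$ gives a factor of $s^{\ell+1}$), so $\dd\bar s|_\Sigma = \dd s|_\Sigma \neq 0$.

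Next I would compute the differential. Differentiating gives
\[
\dd\bar s = n\,\Bigl(1 - \tfrac{1}{2}s^\ell B\Bigr) + s^{\ell+1}\, \eta\,,
\]
where $\eta := -\tfrac{1}{2(\ell+1)}\dd B$ is smooth. Squaring and using that cross terms carry at least one factor $s^{\ell+1}$ while $(\dd h)^2$ terms carry $s^{2\ell+2}$, one obtains modulo $\O(s^{\ell+1})$
\[
|\dd\bar s|^2 \;=\; n^2\Bigl(1 - \tfrac{1}{2}s^\ell B\Bigr)^2 + \O(s^{\ell+1}).
\]

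The final step is the algebraic cancellation: expand, using $n^2 = 1 + s^\ell B$,
\[
n^2\Bigl(1 - \tfrac{1}{2}s^\ell B\Bigr)^2 = (1+s^\ell B)\bigl(1 - s^\ell B + \tfrac{1}{4}s^{2\ell}B^2\bigr) = 1 + \O(s^{2\ell}).
\]
Since $\ell\geq 1$ we have $2\ell \geq \ell+1$, so $|\dd\bar s|^2 = 1 + \O(s^{\ell+1})$, as required.

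There is no real obstacle; the only thing to notice is the neat cancellation that happens precisely because of the factor $\tfrac{1}{2(\ell+1)}$ in the definition of $\bar s$: the $s^\ell$ contribution from expanding $n^2$ is exactly cancelled by the cross term from $(1 - \tfrac{1}{2}s^\ell B)^2$. The factor $\ell+1$, rather than a different constant, does not enter this leading cancellation, but it is forced by consistency if one instead tracks how the correction is being built iteratively (one is effectively Taylor expanding in $s$ with a normalization fixed by $\bar s = s + \O(s^{\ell+1})$). This is the inductive engine that will be iterated to produce the normal defining function to any desired order.
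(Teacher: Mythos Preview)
Your proof is correct and follows essentially the same direct computation as the paper: write $n^2-1=s^\ell B$, differentiate $\bar s$ to get $\dd\bar s=(1-\tfrac12 s^\ell B)\,n+\O(s^{\ell+1})$, square, and observe the $s^\ell$ terms cancel. Your closing commentary about the factor $\ell+1$ ``not entering the leading cancellation'' is muddled, though---the $\tfrac{1}{\ell+1}$ in the formula is precisely what cancels the $(\ell+1)$ coming from $\dd(s^{\ell+1})=(\ell+1)s^\ell\,n+\cdots$ to leave the crucial $\tfrac12$; with any other constant in place of $\tfrac{1}{2(\ell+1)}$ the $s^\ell$ term would survive.
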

\begin{proof}
Since~$n^2=1+s^{\ell} A$, with~$\ell\geq 1$, for some~$A\in C^\infty(M)$, then 
$$
\nabla \bar s = n - \frac{1}{2(\ell+1)}\big((\ell+1) s^{\ell}  A\,  n+ s^{\ell+1} \nabla A\big)=\big(1-\frac1{2} s^\ell A \big)\, n+\O(s^{\ell+1})\, .
$$
Hence
$$
(\nabla \bar s)^2= \big(1 - s^\ell  A)\, n^2 + \O(s^{\ell+1}) = 1+\O(s^{\ell+1})\, ,
$$
where the last equality used the fact that~$n^2 = 1 + s^\ell A$. Note that, at the order treated, Equation~\nn{sbar} uniquely specifies the adjustment of $\bar s$ required  to solve~\eqref{ellp1}.
\end{proof}

An immediate consequence of this Lemma is a recursive formula solving
Problem~\ref{Riemannsfirststep}.

\begin{proposition}\label{R-main}
Problem~\ref{Riemannsfirststep} can be solved uniquely to order~$\ell=\infty$. 
The solution is given recursively by
$$
\bar s:= \bar s_\ell
$$
where
$$
\bar s_\ell= s \, \prod_{k=0}^{\ell-1}  \Big[1-\frac 1{2}\frac{n_{\bar s_k}^{\, 2}-1}{k+2}\Big]\, ,
$$
and~$\bar s_0:=s$ is a normal defining function.
\end{proposition}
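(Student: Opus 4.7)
The plan is to proceed by induction on $\ell$, at each stage invoking Lemma~\ref{recur} with the previously constructed defining function playing the role of $s$. The base case $\ell=0$ is immediate since $\bar s_0 := s$ is normal by hypothesis, so $n_{\bar s_0}^2 = 1 + \mathcal{O}(s)$.

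For the inductive step, I would assume $\bar s_k$ satisfies $n_{\bar s_k}^2 = 1 + \mathcal{O}(s^{k+1})$ and then apply Lemma~\ref{recur} with input defining function $\bar s_k$ and integer $k+1$. This produces exactly the recursive factor appearing in the statement,
$$
\bar s_{k+1} = \bar s_k \left[1 - \frac{1}{2}\, \frac{n_{\bar s_k}^2 - 1}{k+2}\right],
$$
together with the improved estimate $n_{\bar s_{k+1}}^2 = 1 + \mathcal{O}(\bar s_k^{k+2})$. Telescoping the successive multiplicative corrections from $k=0$ up to $k=\ell-1$ then gives the displayed product formula.

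The one point meriting attention, and the closest thing to an obstacle, is that Lemma~\ref{recur} controls the remainder in powers of its input defining function $\bar s_k$ rather than in powers of $s$. This is harmless: because the correction factor at each step equals $1$ on $\Sigma$, one has $\bar s_k = s\, u_k$ with $u_k$ smooth and $u_k|_\Sigma = 1$, so $u_k$ is nonvanishing near $\Sigma$, $\bar s_k$ is itself a defining function equivalent to $s$, and $\mathcal{O}(\bar s_k^m) = \mathcal{O}(s^m)$ for every $m$. With this bookkeeping installed, the induction closes cleanly and the whole argument reduces to iterated application of Lemma~\ref{recur}.

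For the uniqueness assertion I would appeal to the final remark in the proof of Lemma~\ref{recur}, which shows that at each order the correction required to improve $\eqref{ellp1}$ is forced modulo higher-order terms. Consequently $\bar s_\ell$ is determined up to an additive term in $\mathcal{O}(s^{\ell+2})$, and letting $\ell \to \infty$ yields a formal asymptotic solution unique modulo functions vanishing to infinite order at $\Sigma$.
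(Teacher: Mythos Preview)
Your proposal is correct and matches the paper's approach: the paper simply states that Proposition~\ref{R-main} is an immediate consequence of Lemma~\ref{recur}, and your induction is exactly the unpacking of that claim. Your attention to the bookkeeping point $\mathcal{O}(\bar s_k^m)=\mathcal{O}(s^m)$ and your appeal to the final sentence of the proof of Lemma~\ref{recur} for uniqueness are both appropriate and slightly more explicit than the paper itself.
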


We will term a defining function~$s$ obeying~$(\nabla s)^2 = 1$
everywhere a {\it unit defining function}.  In the setting of formal
asymptotics, we will slightly abuse this language by using it also to
refer to solutions to Problem~\ref{Riemannsfirststep} (whose existence
is guaranteed by the previous proposition).

\subsection{The standard Riemannian hypersurface objects and identities}\label{standard}
Let~$\Sigma$ be a smooth hypersurface in a Riemannian manifold~$(M^d,g)$, and write~$\nabla~$ for the Levi-Civita connection of~$g$.
Suppose that~$\hat n^a\in \Gamma (TM)$ is such that its restriction to~$\Sigma$ is a unit normal vector field. The tangent bundle~$T\Sigma$
and the subbundle of~$TM|_\Sigma$ orthogonal to~$\hat n^a$ along~$\Sigma$ (denoted~$TM^\top$) may be identified; this will be assumed
in the following discussion. Thus we will use the same abstract
indices for~$T\Sigma=TM^\top$ as for~$TM$.  

We will also denote the projection of tensors to submanifold tensors
by a superscript~$\top$. In particular, for a vector~$v\in \Gamma (TM)$,
we have~$v^\top:=v-\hat n\, \hat n.v$.  We will generally use a bar to distinguish intrinsic hypersurface quantities from the corresponding ambient objects.
 In particular, for a vector field~$v^a\in \Gamma (T\Sigma)$,
the intrinsic Levi--Civita connection~$\bar \nabla$ is given in terms
of the ambient connection restricted to~$\Sigma$ by the Gau\ss\ formula, 
\begin{equation}\label{hypgrad}
\nablab_a v^b = \nabla_a^\top  v^b+\hat n^b\II_{ac} v^c\, ,
\end{equation}
where~$\nabla_a^\top$ means~$(\delta^b_a-\hat{n}^b\hat{n}_a)\nabla_b$ and
the second fundamental form
$\II_{ab}\in \Gamma \big(\!\odot^2T^{*}\Sigma\big)$ is given~by
\begin{equation}\label{two}
\II_{ab}=\nabla_a^\top \hat n_b\big|_\Sigma\, .
\end{equation}
Its trace yields the mean curvature
$$
H:=\frac1{d-1}\,  \II^a_a\, ,
$$
so that the trace-free second fundamental form~$\IIo_{ab}$ can be written
$$
\IIo_{ab}=\II_{ab}-H \bar g_{ab}\, ,
$$
where (along~$\Sigma$) $g_{ab}-\hat n_a \hat n_b$ is called  first fundamental form or  the induced metric $\bar g_{ab}$.
The intrinsic curvature is related to the (projected) ambient curvature along~$\Sigma$ by the Gau\ss\  equation 
$$
\bar R_{abcd}=R^\top_{abcd}+\II_{ac}\II_{bd}-\II_{ad}\II_{bc}\, .
$$
When~$d\geq4$, writing the trace of this expression  in terms of the trace-free second fundamental form, as well as Schouten and Weyl tensors, leads to
what we shall call the Fialkow--Gau\ss\ equation:
\begin{equation}\label{Gausstrace}
\IIo^2_{ab}\!-\frac12\, \bar g_{ab}\scalebox{1.2}{$\frac{\IIo_{cd_{\phantom{a}\!\!}}\!\IIo^{cd}}{d-2}$}-W(\hat n,a,b,\hat n)\!=\!(d-3)\!\Big(\!\Rho_{ab}^\top-\bar \Rho_{ab}+H\IIo_{ab}+\frac 12\,  \bar g_{ab}H^2\!\Big)\!=:\!(d-3)\F_{ab}\, .
\end{equation}
Since~$W(\hat n,a,b,\hat n)$ ($=:W_{cabd}\hat n^c\hat n^d$) and~$\IIo_{ab}$ are both known to be conformally invariant,  the same applies to the expression on the right hand side of the above display. This was coined
the Fialkow tensor in~\cite{YuriThesis}.

Note that in Equation~\nn{Gausstrace}, some terms such as $W(\hat n,a,b,\hat n)$ may be defined off~$\Sigma$, but it is implicitly clear that this expression as whole only makes sense along~$\Sigma$. In such situations, we  avoid cumbersome notations such as $W(\hat n,a,b,
\hat n)\big|_\Sigma$ wherever clarity allows. 

The covariant curl of the second fundamental form is governed by the Codazzi--Mainardi equation
\begin{equation}\label{Mainardi}
\nablab_a\II_{bc} - \nablab_b \II_{ac}= \big(R_{abcd}\hat n^d\big)^{\!\top}\, ;
\end{equation}
tracing this yields (in~$d\geq 3$)
\begin{equation}\label{Mainarditrace}
\nablab.\IIo_b-(d-2)\nablab_b H=(d-2)\Rho(b,n)^\top\, .
\end{equation}
Conversely, the trace-free part of the Codazzi--Mainardi
equation gives
\begin{equation}\label{CODIIO}
\nablab_a \IIo_{bc}-\nablab_b\IIo_{ac}+\frac1{d-2}\big(\nablab.\IIo_a\,  \bar g_{bc}
-\nablab.\IIo_b\,  \bar g_{ac}\big)
=\big(W_{abcd}\hat n^d)^\top\, .
\end{equation}
Since the Weyl tensor is conformally invariant, this shows that the trace-free curl of the trace-free second fundamental form is also invariant. Indeed the 
mapping of weight~1, trace-free, rank two symmetric tensors 
\begin{equation}
\label{CodazziOperator}
\mathring K_{bc}\xmapsto{\sf Cod} 
\nablab_a \mathring K_{bc}-\nablab_b \mathring  K_{ac}+\frac1{d-2}\big(\nablab.\mathring K_a\,  \bar g_{bc}
-\nablab.\mathring K_b\,  \bar g_{ac}\big)\, ,
\end{equation} is itself a conformally invariant operator. The operator ${\sf Cod}$ will be called the {\it conformal Codazzi operator} in the following.

The link between the intrinsic and ambient scalar curvatures is given by the formula
$$
\II_{ab} \II^{ab} - (d-1)^2 H^2 =  \Sc -2\Ric(\hat n,\hat n)-\overline{\Sc}\, .
$$
This recovers  Gau\ss' {\it Theorema Egregium} for  a three dimensional Euclidean ambient space.
In dimension~$d>2$, the above display may be written in terms of the Schouten tensor and trace-free second
fundamental form and gives
\begin{equation}\label{JJbar}
\J- \Rho(\hat n,\hat n)=\bar\J -\frac{d-1}{2}\, H^2 + \frac{\IIo_{ab}\IIo^{ab}}{2(d-2)}\, .
\end{equation}
This result shows that~$\J-\Rho(\hat n,\hat n)-\bar \J +\frac{d-1}{2}\, H^2$ is conformally invariant.

Finally, we record the relation between the Laplacian of the mean curvature and 
divergence of the second fundamental form; this is a simple consequence of the Codazzi--Mainardi equation:
\begin{equation}\label{boxH}
\begin{split}
\bar\Delta H&=
\frac1{d-1}\nablab^a\big(\nablab.\II_a-\Ric(\hat n,a)^\top\big) =\frac{1}{d-2}\nablab^a\big(\nablab.\IIo_a-\Ric(\hat n,a)^\top\big)\, .
\end{split}
\end{equation}

\subsection{Riemannian hypersurface invariants}\label{definv-sec}
Since locally any hypersurface is the zero set of some defining function, there is no loss of generality in restricting to
those hypersurfaces~$\Sigma$ which are the zero locus~$\mathcal{Z}(s)$
of some defining function~$s$. To further simplify our discussion  we
also assume that~$M$ is oriented with volume form
$\omega$. 
Given a hypersurface in~$M$, it has an orientation
determined by~$s$ and~$\omega$, as~$\boldsymbol{d} s$ is a conormal
field. Different defining functions are {\em compatibly oriented} if
they determine the same orientation on~$\Sigma$.

\begin{definition}\label{R-invtdef}
For hypersurfaces, a {\em scalar Riemannian pre-invariant}  is a function
$P$ which assigns to each pair consisting of a Riemannian~$d$-manifold
$(M,g)$ and hypersurface defining function~$s$, a function
$P(s ; g)$ such that: \\ 
\begin{enumerate}
\item[(i)] $P(s; g)$ is natural, in the sense that
for any diffeomorphism~$\phi:M\to M$ we have~$P(\phi^* s;\phi^* g ) =
\phi^* P(s; g)$.\\ 
\item[(ii)] The restriction of~$P(s; g)$ is independent of the choice of
oriented defining functions, meaning that if~$s$ and~$s'$ are two
compatibly oriented defining functions such that
$\mathcal{Z}(s)=\mathcal{Z}(s')=:\Sigma$ then,~$P(s; g)|_\Sigma= P(s';
g)|_\Sigma$.\\
\item[(iii)] $P$ is given by a universal polynomial expression such that, 
given a local coordinate system~$(x^a)$ on~$(M,g)$,~$P(s; g)$ is given by
a polynomial in the variables
$$g_{ab},~\partial_{a_1}g_{bc}, ~\cdots, ~\partial_{a_1}\partial_{a_2}\cdots \partial_{a_k}
g_{bc},~(\det g)^{-1},$$
$$s,~\partial_{b_1} s,~\cdots~,\partial_{b_1}\partial_{b_2}\cdots \partial_{b_\ell} s, 
~|| \boldsymbol{d} s ||_g^{-1}, \omega_{a_1\ldots a_d}\, ,$$ for some positive integers~$k,\ell$. 
\end{enumerate}
A {\em scalar Riemannian invariant} of a hypersurface~$\Sigma$ is the
restriction~$P(\Sigma;g):=P(s;g)|_\Sigma$ of a 
pre-invariant~$P(s;g)$ to~$\Sigma:= \mathcal{Z}(s)$.
\end{definition}
In (iii)~$\partial_a$ means~$\partial/\partial x^a$,
$g_{ab}=g(\partial_a,\partial_b)$,~$\det g= \det(g_{ab})$ and $\omega_{a_1\ldots a_d}=\omega(\partial_{a_1},\ldots,\partial_{a_d})$.  
Also, in the context of treating  hypersurface invariants, there is no loss of generality studying defining functions such that  $|| \boldsymbol{d} s ||_g^{-1}\neq 0$ 
everywhere in $M$, since we may, if necessary replace $M$ by a local neighborhood of $\Sigma$.
For
(i) note that if~$\Sigma=\mathcal{Z}(s)$, then~$\phi^{-1} (\Sigma)$
is a hypersurface with defining function~$\phi^* s$. The conditions (i),(ii) and
(iii) mean that any Riemannian invariant~$P(s;g)|_\Sigma$ of~$\Sigma$, is
entirely determined by the data~$(M,g,\Sigma)$, and this justifies the notation~$P(\Sigma;g)$. Then in this notation the naturality condition of~(i) implies
~$\phi^*\!( P(\Sigma,g))= P(\phi^{-1}(\Sigma),\phi^* g)$.

While scalar invariants are our main focus, the above
definition is easily extended to define {\em tensor valued
  hypersurface pre-invariants} and {\em invariants}. In that case one
considers instead tensor valued functions~$P$ and requires that the
coordinate components of the image satisfy the conditions (ii), (iii), and the obvious adjustment of (i).  We shall use the term {\em invariant} to mean either
tensor or scalar valued hypersurface invariants.

A simple example of such a tensor valued invariant is the unit
conormal defined by a defining function
$s$, that is ~$\hat n:= \boldsymbol{d} s/|| \boldsymbol{d} s ||_g$. Then the second
fundamental form~$\II$ and its intrinsic covariant derivatives
$\nablab \II$,~$\nablab \nablab \II$ and so forth are easily seen to
arise from the restriction to~$\Sigma$ of tensor-valued
pre-invariants. So, for example,~$\II^{ab}\II_{ab}$ is a scalar hypersurface 
invariant.

It follows that, in the spirit of Weyl's clasical invariant theory, one may generate Riemannian hypersurface invariants by expressions of the form
\begin{equation}\label{parcon}
\mbox{Partial-contraction}\big(n\!\cdots\! n\,  (\nablab \!\cdots\! \nablab \II )
\ldots (\nablab \!\cdots\! \nablab \II ) (\nabla \!\cdots\! \nabla R ) \ldots
(\nabla \!\cdots\! \nabla R ) \big) \big|_\Sigma\, ,\end{equation} where
``{Partial-contr\-action'' is some use of the inverse metric (and the
  volume form if~$M$ is oriented) to contract some number of indices
  (and as usual we view $T\Sigma$ and $T^*\Sigma$ as subbundles of,
  respectively, $TM|_\Sigma$ and $T^*M|_\Sigma$).

\subsection{Invariants via the unit defining function}\label{R-viaunit}

Proposition~\ref{R-main} asserts that a Riemiannian hypersurface
embedding $\Sigma \hookrightarrow M$, (formally) determines a unit
defining function~$\bar{s}$ that is unique to $\O({\bar s}^\infty)$
along~$\Sigma$. In fact is straightforward to re-express the
derivatives of $\bar{s}$, along $\Sigma$, in terms of the objects
introduced above:
\begin{proposition}\label{ind4R}
If~$\bar{s}$ is a unit defining function for a Riemannian hypersurface
$\Sigma$ then, for any integer~$k\geq 1$, the quantity 
$$
\nabla^k \bar{s}|_\Sigma
$$ may be expressed as~$\nablab^{k-2}\II$ plus a
linear combination of partial contractions involving $\nablab^{\ell}\II$ for~$ 0\leq \ell\leq k-3$, and the
Riemannian curvature~$R$ and its covariant derivatives (to order at
most~$k-3$) and  the undifferentiated
conormal~$n$,.
\end{proposition}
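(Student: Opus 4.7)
The plan is to proceed by induction on $k$. The base cases are immediate: $\nabla_a \bar s = n_a$ handles $k=1$, and for $k=2$ the unit-length identity $|\nabla \bar s|^2 = 1$ gives $n^b \nabla_a n_b = \tfrac{1}{2}\nabla_a(1)=0$, while the symmetry $\nabla_a n_b = \nabla_b n_a$ (since $n=\dd \bar s$ is exact) forces $\nabla^2 \bar s$ to have no $\hat n$-legs along $\Sigma$. Hence $\nabla^2 \bar s|_\Sigma = \II$, in accord with the claimed leading term $\nablab^0 \II$.

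For the inductive step, assume the statement for all orders $\le k$, and decompose $\nabla^{k+1} \bar s|_\Sigma$ according to whether the outermost covariant derivative is tangential or normal along $\Sigma$. The tangential projection $v^c \nabla_c \nabla^k \bar s|_\Sigma$, for $v$ a tangent vector on $\Sigma$, may be rewritten via the Gau\ss\ formula \eqref{hypgrad} as an intrinsic $\nablab$-derivative of $\nabla^k \bar s|_\Sigma$ plus $\II$-contractions against it. Applying the inductive hypothesis $\nabla^k \bar s|_\Sigma = \nablab^{k-2}\II + \LOT$ then produces the leading term $\nablab^{k-1}\II$ of the claim, with lower-order pieces of the required form contributed both by the Gau\ss\ correction and by the intrinsic derivatives of the remainder.

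The normal projection $n^c \nabla_c \nabla^k \bar s|_\Sigma$ cannot be read off from the restriction alone, and is the main obstacle. The strategy is to commute $n^c\nabla_c$ past each of the $k$ inner covariant derivatives: each commutator $[\nabla_a,\nabla_c]$ produces a curvature contribution $n^c R_{ca\cdot\,\cdot}$ acting on a shorter tower of derivatives of $\bar s$, while each Leibniz correction contributes a factor $\nabla_a n^c$, which restricts to $\II$ on $\Sigma$ by the $k=2$ argument. After $k$ such commutations, the innermost operator reduces to $n^c \nabla_c \bar s = 1$, whose further covariant derivatives vanish; the surviving terms are polynomials in $\II$, $R$, $n$, and towers $\nabla^{j}\bar s|_\Sigma$ with $j < k$, which the inductive hypothesis recasts in the required form.

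The principal technical point is the bookkeeping in the commutator argument: each commutation transfers exactly one derivative from $\bar s$ (shortening the surviving tower by one order) to either an $\II$-factor or a curvature factor, so a straightforward degree count confirms that the total derivative orders in $\nabla^{k+1}\bar s|_\Sigma$ remain within the claimed bounds $\ell \le k-1$ on $\nablab^\ell \II$ and $j \le k-2$ on $\nabla^j R$. An additional care is that terms in the inductive expression for $\nabla^k \bar s|_\Sigma$ already contain undifferentiated conormals $n$; when the Gau\ss\ formula is applied to these, the resulting $\II$-contributions again fit the overall scheme, so no new issues arise.
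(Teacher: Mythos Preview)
The paper states this proposition without proof, so there is no argument to compare against directly. Your inductive scheme is correct and is essentially the natural one: split the outermost derivative into its tangential and normal parts, handle the tangential piece via the Gau\ss\ formula and the inductive hypothesis, and reduce the normal piece by commuting $n^c\nabla_c$ inward, using $n^c\nabla_c n_a=\tfrac12\nabla_a|n|^2=0$ to kill the fully commuted term. The commutator and Leibniz corrections then produce only curvature factors and shorter towers $\nabla^j\bar s$ with $j\le k$, which the hypothesis converts as required; your degree count is straightforward.

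It is worth noting that the paper's conformal analogue, Lemma~\ref{main-calc}, is likewise stated as ``straightforward'' but with a hint that Lemma~\ref{rho-engine} supplies the main ingredient. That lemma proceeds by repeatedly applying $\nabla_n$ to the defining relation $I^2_{\bar\sigma}=1+\bar\sigma^d B$, which is precisely the conformal version of your commutation of $n^c\nabla_c$ through the tower using $|n|^2=1$. So your argument is exactly in the spirit the authors intend.
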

\noindent Thus via Proposition~\ref{R-main} every partial contraction
$$
\mbox{Partial-contraction}\big( (\nabla \cdots \nabla \bar{s} ) \ldots (\nabla \cdots \nabla \bar{s} )  (\nabla \cdots \nabla R ) \ldots (\nabla \cdots \nabla R ) \big) \big|_\Sigma\, ,
$$ 
encodes a Riemannian hypersurface invariant and may be written in
terms of expressions of the form \nn{parcon} using
Proposition~\ref{ind4R}. In the following we 
establish an analogous approach for conformal hypersurface invariants.

\section{Conformal geometry and the ASC Problem} \label{ASC-sec}

A conformal structure~$\cc$ is an equivalence class of Riemannian
metrics where any two metrics~$g,g'\in \cc$ are related by a {\it conformal rescaling}; that is~$g'=f g$ with $C^\infty M\ni f>0$.
On a conformal manifold~$(M,\cc)$, there is no distinguished connection on
$TM$. However there is a canonical metric~$h$ and linear connection
$\nabla^{\ct}$ (preserving~$h$) on a related higher rank vector bundle known as
the tractor bundle. This enables us to greatly simplify the treatment of  Problem~\ref{conffirststep}.

\subsection{Basic conformal tractor calculus}\label{trac-sec}
We follow here the development
 of~\cite{BEG}, see also \cite{GoPetCMP}. The standard
tractor bundle and its connection are linked and equivalent to
the normal conformal Cartan connection~\cite{CapGoirred,CapGoTAMS}, and are equivalent to objects
developed by Thomas~\cite{Thomas}.

On a conformal~$d$-manifold~$(M,\cc)$, the standard tractor
bundle~$\ct M$ (or simply~$\cT$ when ~$M$ is understood or~$\cT^A$ as its abstract index notation) is a
rank~$d+2$ vector bundle equipped with a canonical
tractor connection~$\nabla^\ct$. The bundle~$\ct$ is not
irreducible but has a  composition series summarised via
a semi-direct sum notation $$ \cT^AM= \ce M[1]\lpl
\ce_a M[1]\lpl\ce M[-1]\, .$$
Here~$\ce M[w]$ (or~$\ce[w]$ when~$M$ is understood), for~$w\in {\mathbb R}$, is the
conformal density bundle which recall, is the natural (oriented) line bundle 
equivalent, via the conformal structure~$\cc$, to~$\big[(\wedge^d TM)^2\big]^{\frac{w}{2d}}$. Then 
  while~$\ce_a M[w]$ denotes~$\ce M[w]\otimes T^*M=:T^*M[w]$.
This means that there canonically is a  bundle
inclusion~$ \ce[-1] \to \cT^A$, with~$\ce_a[1]$ a subbundle of the
quotient by this, and there is a surjective bundle map~$\cT\to
\ce[1]$.  We denote by~$X^A$ the canonical section of~$
\cT^A[1]:=\cT^A\otimes \ce[1]$ giving the first of these:
\begin{equation}\label{X-incl}
X^A:\ce[-1]\to \cT^A\,  ;
\end{equation}
we refer to~$X$ as the {\em canonical tractor}.

A choice of metric~$g \in \cc$ determines an
isomorphism
\begin{equation}\label{split}
\mathcal{T} \stackrel{g}{\cong} \ce[1]\oplus T^*\!M[1]\oplus
\ce[-1] ~.
\end{equation}
We may write, for example,~$U\stackrel{g}{=}(\si,~\mu_a,~\rho)$, or
alternatively~$[U^A]_g=(\si,~\mu_a,~\rho)$, to mean that~$U$ is an invariant
section of~$\ct$ and~$(\si,~\mu_a,~\rho)~$ is its image under the
isomorphism~\nn{split}. Sometimes we will use~\nn{split}  without emphasis on the metric~$g$, if this is
understood by context.
Changing to a conformally related metric
$\widehat{g}=e^{2\Up}g$  gives a different
isomorphism, which is related to the previous one by the transformation
formula
\begin{equation}\label{transf}
\widehat{(\si,\mu_b,\rho)}=(\si,\mu_b+\si\Up_b,\rho-\bg^{cd}\Up_c\mu_d-
\tfrac{1}{2}\si\bg^{cd}\Up_c\Up_d),  
\end{equation}
where~$\Upsilon_b$ is the one-form~$\Omega^{-1}\dd\Omega$.

 In terms of the above {\it splitting}, the tractor connection is given by 
\begin{equation}\label{trconn}
\nd^{\ct}_a
\left( \begin{array}{c}
\si\\[1mm]\mu_b\\[1mm] \rho
\end{array} \right) : =
\left( \begin{array}{c}
    \nabla_a \si-\mu_a \\[1mm]
    \nabla_a \mu_b+ \bg_{ab} \rho +\Rho_{ab}\si \\[1mm]
    \nabla_a \rho - \Rho_{ac}\mu^c  \end{array} \right) .
\end{equation}
 It is straightforward
to verify that the right-hand-side of~\nn{trconn}  transforms according to~\nn{transf},
and this verifies the conformal invariance
of~$\nabla^\ct$. In the following we will usually write simply
$\nabla$ for the tractor connection. Since it is the only connection
we shall use on~$\ct$, its dual, and tensor powers, this should not
cause any confusion. Its curvature is  the tractor-endomorphism valued two form
$
{\mathcal R}^{\!\!\bm \sharp} \!
$;
in the above splitting this acts as
\begin{equation}\label{curvature}
{\mathcal R}_{ab}^{\!\!\bm \sharp}\left( \begin{array}{c}
\si\\[1mm]\mu_c\\[1mm] \rho
\end{array} \right) 
=
[\nabla_a,\nabla_b] 
\left( \begin{array}{c}
\si\\[1mm]\mu_c\\[1mm] \rho
\end{array} \right)  =
\left( \begin{array}{c}
   0 \\[1mm]
    W_{abc}{}^d \mu_d+ C_{abc} \sigma  \\[1.3mm]
    -C_{abc} \mu^c
    \end{array}\right)\, .
\end{equation}
Here $C_{abc}:=\nabla_a\Rho_{bc}-\nabla_b\Rho_{ac}$ denotes  the Cotton tensor.

 For~$[U^A]=(\si, \mu_a,
\rho)$ and~$[V^A]=(\tau, \nu_a,\kappa)$,  the conformally invariant {\em tractor metric}~$h$ on~$\mathcal{T}$  is given by
\begin{equation}\label{trmet}h(U,V)=h_{AB}U^A V^B=\si \kappa +\bg_{ab}\mu^a\nu^b+\rho\tau=:U\cdot V\,  .\end{equation}
Note that this has signature~$(d+1,1)$ and, as mentioned, is
preserved by the tractor connection, {\it i.e.},~$\nabla^\cT h=0$. 
It follows from this formula that~$X_A=h_{AB}X^B$ provides the surjection 
$X_A:\cT^A\to \ce[1]$.
   The tractor metric~$h_{AB}$
and its inverse~$h^{AB}$ are used to identify~$\ct$ with its dual in
the obvious way, equivalently, it is used to raise and lower tractor
indices. We will often employ the shorthand notation~$V^2$ for~$V_A V^A=h(V,V)$.

Tensor powers of the standard tractor bundle~$\ct$, and tensor parts
thereof, are vector bundles that are also termed tractor bundles. We
shall denote an arbitrary tractor bundle by~$\cT^\Phi$ and write
$\cT^\Phi[w]$ to mean~$\cT^\Phi\otimes \ce[w]$;~$w$ is then said to be
the weight of~$\cT^\Phi[w]$. 

Closely linked to~$\nabla^\ct$ is an important, second order,
conformally invariant differential operator $$ D^A : \Gamma (\cT^\Phi
[w])\to \Gamma (\cT^{\Phi'}[w-1])\, ,$$   known as the Thomas-D
(or tractor D-) operator. Here  $\cT^{\Phi'}[w-1]:=\cT^A\otimes \cT^\Phi[w-1]$. In a scale~$g$, 
\begin{equation}\label{Dform}
[D^A  ]_g =\left(\begin{array}{c} (d+2w-2)\, w\  \hspace{.3mm} \\[1mm]
(d+2 w-2) \nabla_a \\[1mm]
-(\Delta+ \J w)    \end{array} \right) ,
\end{equation}
where~$\Delta=\bg^{ab}\nabla_a\nabla_b$, and~$\nabla$ is the coupled
Levi-Civita-tractor connection~\cite{BEG,Thomas}. When $w=1-\frac{d}{2}$ we have
$D^A\stackrel g=-X^A (\Delta+[1-d/2]\J) $ where   $\Delta+[1-d/2]\J$ is conformally invariant; on densities this is the well-known Yamabe 
operator, so we term $w=1-\frac d2$ the {\it Yamabe weight}. 
The following variant of the Thomas D-operator is also useful.
\begin{definition}\label{hD}
Suppose that~$w\neq1-\frac d2$. The operator
$$
\hD^A:\Gamma(\cT^\Phi [w])\longrightarrow \Gamma\big(\ct^{\Phi'} [w-1]\big)
$$
is defined by
$$
\hD^A T :=  \frac1{d+2w-2} \, D^A T\, .
$$
\end{definition}

\begin{remark}\label{intYam}
Given a weight $w'$ tractor $V^A\in \Gamma(\ct^AM\otimes \ct^{\Phi'} M[w'])$ subject to
$$
X_A V^A=0\, ,
$$
where $[V^A]\stackrel {g} =(0,v_a,v)$  for some $g\in\cc$,
then we may extend the above definition to the projection along $V^A$ of the operator $\hD_A$   at the Yamabe weight $w=1-\frac{d}{2}$, by defining
$$
\begin{array}{cccc}
 V^A \hD_A :\!\!\!& \Gamma(\ct^\Phi M[1-\frac d2])&\longrightarrow& \Gamma(\ct^{\Phi'}M[w']\otimes \ct^{\Phi}  M[-\frac d2]) \\[1mm]
 &\rotatebox{90}{$\in$}&&\rotatebox{90}{$\in$}\\[1mm]
 &T&\stackrel g\longmapsto&\big(v^a \nabla_a +  [1-\frac d2]\, v\big)\, T\, .
 \end{array}
 $$
 {}[In~\cite{GoAdv} this invariant operator was denoted $V^A\widetilde D_A$.]
\end{remark}

Finally, the following Lemma is easily verified by direct application of Equation~\nn{Dform}:
\begin{lemma}
Let $T\in \Gamma(\ct^\Phi M[w])$. Then 
\begin{equation}\label{DXT}
D_A(X^AT)=(d+w)(d+2w+2)T\, .
\end{equation}
\end{lemma}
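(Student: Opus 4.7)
The plan is to verify the identity in a chosen metric scale $g\in\cc$; since $D_A$ and $X^A$ are both conformally invariant, the identity then holds in general. In the splitting \nn{split}, $X^AT$ is the weight-$(w+1)$ tractor whose only non-zero component is the bottom slot, equal to $T$.

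Applying the Thomas-D formula \nn{Dform} at weight $w+1$ expresses $D^B(X^AT)$ via three ingredients: a zero-order multiplier $(d+2w)(w+1)X^AT$ in the top $B$-slot; the coupled gradient $(d+2w)\nabla_b(X^AT)$ in the middle $B$-slot; and $-(\Delta+(w+1)\J)(X^AT)$ in the bottom $B$-slot. A short calculation using \nn{trconn} shows that $\nabla_b X^A$ is entirely in the middle $A$-slot, and that the covariant derivative of a middle-slot tractor contributes both to the top slot (proportional to $-\bg$) and to the bottom slot (proportional to $-\Rho$). Applying Leibniz once and twice, and then tracing over $b$, produces explicit splitting formulas for $\nabla_b(X^AT)$ and $\Delta(X^AT)$; in particular, the $\bg^{ab}\bg_{ab}=d$ trace generates a top-slot term $-d\,T$ inside $\Delta(X^AT)$.

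The contraction $D_A(X^AT)=h_{AB}D^B(X^AT)$ is then evaluated using the only non-vanishing slot pairings of $h_{AB}$: top-with-bottom, bottom-with-top, and middle-with-middle. Since $X^A$ sits entirely in the bottom $A$-slot, it pairs non-trivially only against the top $B$-slot. Consequently every derivative-of-$T$ contribution in $D^B(X^AT)$ carries along a stray $X^A$ factor that lands against the middle or bottom $B$-slot and so vanishes. Only three algebraic multiples of $T$ survive: $(d+2w)(w+1)T$ from the top-$B$/bottom-$A$ pairing, $d(d+2w)T$ from the middle-$B$/middle-$A$ pairing (the trace contributing the factor $d$), and $d\,T$ from the bottom-$B$/top-$A$ pairing with the top-slot piece of $\Delta(X^AT)$. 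Summing,
$$
D_A(X^AT)=\bigl[(d+2w)(w+1)+d(d+2w)+d\bigr]T \;=\; (d+w)(d+2w+2)\,T,
$$
as claimed. The only real obstacle is careful slot bookkeeping; the key simplification is that every derivative of $T$ carries an $X^A$ factor and is therefore annihilated by the contraction, leaving a purely algebraic sum.
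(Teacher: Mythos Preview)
Your proof is correct and follows exactly the approach the paper indicates, namely direct verification via the Thomas-D formula \nn{Dform} in a chosen scale. The slot bookkeeping and final algebra are right; note only that the cross term $-2(\nabla^{b}X^{A})\nabla_{b}T$ arising in $\Delta(X^{A}T)$ carries a $\nabla X^{A}$ rather than an $X^{A}$ factor, but it still sits in the middle-$A$/bottom-$B$ position and so is annihilated by the contraction exactly as your argument requires.
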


\subsection{Conformal hypersurfaces}\label{chyp}

We now consider
a hypersurface~$\Sigma$ smoothly embedded in a conformal manifold $(M,\cc)$, and term this a {\it conformal embedding of~$\Sigma$}.
Then the conformal structure $\cc$ on $M$ induces a conformal structure~$\bar \cc$ on $\Sigma$.
Moreover, working locally, we 
may assume that there is a section~$\hat n_a$ of $T^*M[1]$ such that ${\bg}_{ab}\hat n^a \hat n^b =1$ along~$\Sigma$; so $\hat n_a$ is the conformal analog of a Riemannian unit conormal field.

There is also a corresponding unit tractor
object (from~\cite{BEG}) called the {\em normal tractor}~$N^A$ which is defined along~$\Sigma$, in some choice of scale,  by
\begin{equation}\label{normaltractor}
[N^A]\stackrel\Sigma= \begin{pmatrix}
0\\\, \hat n_a\\ -H
\end{pmatrix}\, .
\end{equation}
This is the basis for  a conformal hypersurface calculus~\cite{Goal,Grant,Stafford,YuriThesis} which is further developed in the article~\cite[Section 4]{GW-willII}.
The most basic ingredient of this is 
the conformal tractor analog
of the Riemannian isomorphism between the intrinsic tangent bundle~$T\Sigma$ and the subbundle~$TM^\top\!$ of
~$TM|_\Sigma$ orthogonal to~$\hat n^a$ along~$\Sigma$; this relates
the
hypersurface tractor bundle~$\ct\Sigma$ and the ambient one~$\ct M$.
In fact (see~\cite{BrGoCNV} and~\cite[Section 4.1]{Goal}),  the subbundle~$N^\perp$ orthogonal to the normal tractor (with respect to the tractor metric~$h$)
along~$\Sigma$
is canonically isomorphic to the intrinsic hypersurface tractor bundle~$\ct\Sigma$.
In the same spirit as our treatment of the Riemannian case (see Section~\ref{standard}), we will use this isomorphism to identify these bundles and  use the same abstract index for~$\ct M$ and~$\ct \Sigma$. To employ this isomorphism for explicit computations in a given choice of scale,  we  the detailed
relationship between sections as given in corresponding splittings of the ambient and hypersurface tractor bundles is needed.
 In terms of sections expressed in a scale~$g\in \cc$ (determining $\bar g\in\bar\cc$), this isomorphism is given by the map
\begin{equation}\label{Tisomorphism}
\big[V^A\big]_g:=\begin{pmatrix}v^+\\[1mm]v_a\, \\[2mm]v^-\end{pmatrix}\mapsto
\begin{pmatrix}
v^+\\[1mm]v_a-\hat n_aH v^+\\[2mm]v^-+\frac12 H^2 v^+
\end{pmatrix}=\big[U^A{}_B\big]_{\bar g}^g\, \big[ V^B\big]_g=:\big[\bar V^A\big]_{\bar g}\, ,
\end{equation}
where~$V^A\in\Gamma (N^\perp)$ and~$\bar V^A\in  \Gamma(\ct \Sigma)$.
Here the~$SO(d+1,1)$-valued matrix
$$
\big[U^A{}_B\big]_{\bar g}^g:=\begin{pmatrix}1&0&\ 0\ \\[2mm]-\hat n_a H&\delta_a^b&0\\[3mm]-\frac12 H^2&\hat n^b H&1\end{pmatrix}\, ,
$$
and we have used the canonical isomorphism between~$T^*M\big|_\Sigma$ and~$T^*\Sigma$ defined by the (unit)  normal vector~$\hat n_a$
to identify sections of these. Note that for tractors along~$\Sigma$  in the joint kernel of~$X_\llcorner$ and~$N_{\llcorner}$ 
(contraction by the canonical and normal tractors), the map~\nn{Tisomorphism} is the identity.

\subsection{Defining densities}\label{geom-Sc}
The notion of a defining function adapts naturally to densities, as follows.

Given a hypersurface~$\Sigma$, a section~$\sigma\in\Gamma(\ce[1])$ 
is said to be a {\it defining density} for~$\Sigma$ if~$\Sigma=\Z(\sigma)$ and~$\nabla \sigma$ is nowhere vanishing along~$\Sigma$ where $\nabla$ is the Levi-Civita connection for some, equivalently  any,  $g\in\cc$.
For a defining density~$\sigma$, we define a corresponding tractor field
\begin{equation}\label{sctrac-def}
I^A_\si:=\hD^A \si\, .
\end{equation}
For later use we introduce some notation for the components of this in a scale:
$$ [\hD^A\si]\stackrel{g}{=} (\sigma,\nabla_a \si,-\frac{1}{d}(\Delta +\J)\si)=:(\sigma,n_a,\rho) .$$
Since $I^A_\si$ includes the full 1-jet of $\si$ it follows at once that 
it is nowhere vanishing.  In fact since we assume Riemannian signature
 for any defining density~$\sigma$ we have that
\begin{equation}\label{ge0}
I_\sigma^2>0
\end{equation}
holds in a neighbourhood of~$\Sigma$.

\subsection{Scale and almost Riemannian structure}\label{aR}
A choice of positive section~$\tau\in \Gamma(\ce_+[1])$ is
equivalent to a choice of metric from the conformal class~$\cc$ via
the relation~$g^\tau:=\tau^{-2}\bg$. Traditionally any such section is
thus termed a scale. However conformally compact manifolds, conformal
hypersurfaces, and related structures are naturally treated by working
with densities that may have a zero locus (and change sign). Thus we generalise our notion of scale as follows.

\begin{definition}\label{scale} On a conformal manifold $(M,\cc)$ any  section
 ~$\sigma\in\Gamma(\ce[1])$ such that $$I^A_\si:=\hD^A \si$$ is nowhere
  vanishing is called a {\em scale} and $I^A_\si$ is the corresponding
  {\em scale tractor}. We will describe such data $(M,\cc,\si)$
  (equivalently $(M,\cc,I^A_\si)$) as an {\em almost Riemannian
    structure} or almost Riemannian geometry.
 \end{definition}

On an almost Riemannian structure $\si= X_A I^A$ (denoting $I:=I_\sigma$) is non-vanishing on
an open dense set; so $\si$ determines a metric on such an open
set. On the other hand, because it is nonzero,~$I^A$ provides a
structure on the manifold~$M$ which connects the geometry of these
metrics to that of the zero locus of~$\si$. This idea was developed
in~\cite{Goal} (see also \cite{CurryG}).  Note that if $I^2$ has a
fixed strict sign then $I^A$ determines a structure group reduction of
the conformal Cartan geometry, {\it cf.}~\cite{CapGoH-duke} where holonomy
reductions are treated. 

Note that, in particular, if $\si$ is a defining density for a
hypersurface  then $\sigma$ is a scale away from its zero locus  $\Sigma$. Thus a conformally compact manifold $\,\overline{\!M}$ is
the same as an almost Riemannian geometry
$(\,\overline{\!M},\cc,\si)$, where $\si\in \Gamma(\ce[1])$ is a
defining density for the boundary $\partial M=\mathcal{Z}(\si)$.

On conformally compact manifolds one may consider the natural
curvature quantities on the bulk. In general these are not expected to
extend smoothly (or in some reasonable way) to the boundary since the
metric is singular there.  It is therefore important to determine the
extent to which such curvatures may have meaningful limiting values at
the conformal infinity. In fact such questions can be treated quite
mechanically by treating the structure as an almost Riemannian
geometry, and this is a main motivation for this notion.

This idea applies in particular to the scalar curvature, which lies at
the heart of our considerations here.
The following refers to the quantity $S(\si)$ in Equation~\nn{Ytwo}:
\begin{proposition}[see \cite{Goal}]\label{I2-prop}
For~$\si\in \Gamma(\ce[1])$ the quantity~$S(\si)$  is the squared length of the corresponding scale tractor: 
\begin{equation}\label{Isq}
S(\si)=I^2_\si:= h_{AB}I^A_\si I^B_\si.
\end{equation}
\end{proposition}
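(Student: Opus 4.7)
The plan is to verify the identity by direct computation in an arbitrarily chosen scale $g \in \cc$, exploiting the fact that both sides are conformally invariant: the right-hand side $I^2_\si = h_{AB} I^A_\si I^B_\si$ manifestly so by construction (the tractor metric $h$ and the Thomas-$D$ operator $\hD^A$ are conformally invariant objects), and the left-hand side will inherit invariance from the equality.

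First, I would evaluate $I^A_\si = \hD^A \si$ in the scale $g$ by specialising formula \eqref{Dform} to weight $w=1$. At this weight $d+2w-2 = d$, so
\[
[D^A \si]_g = \begin{pmatrix} d\,\si \\[1mm] d\,\nabla_a\si \\[1mm] -(\Delta+\J)\si \end{pmatrix},
\qquad
[\hD^A \si]_g = \begin{pmatrix} \si \\[1mm] \nabla_a\si \\[1mm] -\tfrac{1}{d}(\Delta+\J)\si \end{pmatrix},
\]
which is consistent with the splitting of $I^A_\si$ recorded just after \eqref{sctrac-def}.

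Next, I would compute $h_{AB} I^A_\si I^B_\si$ by applying the tractor metric formula \eqref{trmet} to $[I^A_\si]_g = (\si,\, \nabla_a\si,\, -\tfrac{1}{d}(\Delta+\J)\si)$. Recalling that \eqref{trmet} pairs the top slot of one tractor with the bottom slot of the other and contracts the middle slots via $\bg^{ab}$, the result is
\[
I^2_\si = 2\si \cdot\bigl(-\tfrac{1}{d}(\Delta+\J)\si\bigr) + \bg^{ab}(\nabla_a\si)(\nabla_b\si) = (\nabla\si)^2 - \tfrac{2}{d}\si\,(\Delta + \J)\si.
\]
Substituting $\J = \Sc/(2(d-1))$ from Section \ref{R-conv} yields precisely $S(\si)$ as defined in \eqref{Ytwo}, completing the identification.

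There is no significant obstacle: the proof is a one-line unpacking of the definitions once the Thomas-$D$ formula and tractor metric are in hand. The only thing worth flagging is that the factor of $2$ in the cross term $2\si\rho$ of the tractor inner product combines with the $-1/d$ in the bottom slot to produce exactly the coefficient $-2/d$ appearing in \eqref{Ytwo}; the conformal invariance of $S(\si)$ — which might otherwise require a separate verification via \eqref{transf} applied to density weight $w=1$ — then follows automatically from the invariance of $h$ and $\hD^A$.
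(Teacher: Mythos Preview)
Your proof is correct and is the natural direct verification; the paper itself does not supply a proof but simply cites \cite{Goal} for this fact, so there is nothing to compare against beyond noting that your unpacking of \eqref{Dform} at $w=1$ and \eqref{trmet} is exactly the computation one would expect.
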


Thus the key Equation (\ref{Ytwo}) has a simplifying and geometrically useful
tractor fomulation
\begin{equation}\label{ASCond}I_\si^2=1\, . \end{equation}
This observation is critical for our later developments.  Meanwhile
observe that although the Equation~ (\ref{Ytwo}) is second order, it
is a natural conformal analogue of the Riemannian equation~$n_s^2=1$
of Problem~\ref{Riemannsfirststep}: in place of~$n_s=\dd s$ we have
$I_\si=\hD\si$. Moreover, if $I^2=1+\mathcal{O}(\si^2)$ then
$I_\si|_\Sigma=N$, the normal tractor of \nn{normaltractor}, see
\cite{Goal}.

If~$I_\si$ is any scale tractor for~$\si\in \Gamma(\ce[1])$, then away
from the zero set of~$\si$ we have~$I^2_\si=-
\Sc^{g^o}\!/\big(d(d-1)\big)$, where $g^o=\si^{-2}\bg$. Thus
Proposition \ref{I2-prop} shows that on smooth conformally compact
manifolds, the scalar curvature extends smoothly to the boundary.
Moreover, on~$M$, the condition~$I^2_\si={\rm constant}$ generalises the
condition of constant scalar curvature.  If this holds, we term the
manifold {\em almost scalar constant}~(ASC) and there are severe
restrictions on the possible zero loci~$\Z(\si)$ of~$\si$,
see~\cite{Goal}. In particular, if~$I^2_\si=1$ it is either the case
that $\Z(\si)$ is empty, or else a smoothly embedded hypersurface. If
not a boundary, the latter is separating. Thus the problem treated
here fits very nicely into this theory.


 Since our interests here concern conformally compact manifolds, and
 more generally conformal hypersurfaces (in the case of Riemannian
 signature) we will henceforth assume~\nn{ge0} holds on~$M$, without
 loss of generality.

\subsection{Laplace--Robin operator, $\frak{sl}(2)$  and tangential operators}\label{sl2structure}

Combining the scale tractor $I:=I_\sigma$ and the Thomas D-operator  gives 
the {\it Laplace--Robin} operator~\cite{powerslap,GoIP}
$$
I\cdot D:\Gamma(\ct^\Phi[w])\longrightarrow \Gamma(\ct^\Phi[w-1])\, .
$$
This is a canonical degenerate Laplace operator with degeneracy precisely along~$\Sigma=\Z(\sigma)$. Calculated in some scale $g\in\cc$
\begin{equation}\label{IdotD}
\begin{split}
I\cdot D&\stackrel g=\ 
(d+2w-2)(\nabla_n+w\rho)-\si (\Delta+w\J)\\[1mm] &=
-\sigma \Delta + (d+2w-2)\big[\nabla_n-\frac wd(\Delta \sigma)\big]-\frac{2w}{d}(d+w-1)\sigma\J\, ,
\end{split}
\end{equation}
where $n=\nabla\sigma$. Away from $\Sigma$, we may specialise this to the scale $g^o=\sigma^{-2}\bg$; trivialising density bundles accordingly gives a tractor-coupled Laplace-type operator
$$
I\cdot D\stackrel{g^o}=-\Big(\Delta-\frac{2w(d+w-1)}{d}\J\Big)\, .
$$
Conversely, along $\Sigma$, the Laplace--Robin operator becomes first order. In particular, when the scale tractor $I_\sigma$ obeys the ASC condition~\nn{ASCond} along~$\Sigma$,
\begin{equation}\label{RobinI}
I\cdot D=(d+2w-2)\delta_n\, ,
\end{equation}
where the first order operator 
$$
\delta_n :\stackrel g=\nabla_n-wH\, .
$$ 
 The above is precisely the conformally invariant Robin operator of~\cite{cherrier,BrGoCNV}.

In addition to unifying boundary dynamics and interior Laplace problems, the Laplace--Robin operator $I\cdot D$ and the scale~$\sigma$ are generators of a solution-generating $\frak{sl}(2)$ algebra~\cite{GW}. 
To display this, we first define a triplet of canonical operators.
\begin{definition}\label{xhy}
Let $\sigma\in\Gamma(\ce M[1])$ be a defining density
with 
nowhere vanishing $I^2$. Then we define the triplet of operators $x$, $h$, and $y_\sigma=:y$, mapping   $$\Gamma(\ct^\Phi M[w])\to \Gamma(\ct^\Phi M[w+\varepsilon])\, ,$$ where $\varepsilon=1,0$, and $-1$, respectively, and for 
 $f\in 
\Gamma(\ct^\Phi M[w])
$
$$
xf:=\sigma f\, ,\qquad h f:= (d+2w) f\, ,\qquad
y  f:=-\frac1{I_\sigma^2}\, I_\sigma\cdot D f\, .
$$
\end{definition}

\begin{proposition}[\cite{GW},  Proposition 3.4]\label{thesl2}
 The operators $\{x,h,y\}$ obey the ${\frak sl}(2)$ algebra
\begin{equation}\label{sl2}
[h,x]=2x\, ,\qquad [x,y]=h\, ,\qquad [h,y]=-2y\, .
\end{equation}
\end{proposition}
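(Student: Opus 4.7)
The two bracket relations $[h,x]=2x$ and $[h,y]=-2y$ are essentially bookkeeping about conformal weights. For any $f\in\Gamma(\ct^\Phi M[w])$, the section $xf=\sigma f$ has weight $w+1$ and $yf$ has weight $w-1$, so the plan is to apply $h$ to $xf$ and to $yf$, read off the weight-dependent eigenvalues, and subtract. This disposes of two of the three identities immediately and with no use of the particular form of $I\cdot D$.

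The substance of the proposition is the relation $[x,y]=h$. Since $y=-(1/I_\sigma^2)\,I_\sigma\cdot D$ where $1/I_\sigma^2$ is a smooth weight-$0$ function on $M\setminus\Sigma$, one has
$$
[x,y]f=\frac{1}{I_\sigma^2}\Big(I\!\cdot\! D(\sigma f)-\sigma\,(I\!\cdot\! D f)\Big),
$$
so the task reduces to computing the commutator $[I\cdot D,\sigma]$ on a weight-$w$ density. Here I would work in a choice of scale $g\in\cc$, use the explicit formula \nn{IdotD} for $I\cdot D$, and expand both $I\cdot D(\sigma f)$ (at weight $w+1$) and $\sigma(I\cdot D f)$ (at weight $w$), being careful that $I\cdot D$ carries the weight-dependent coefficients $(d+2w-2)$ and $w$. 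The calculation splits naturally into a first-order piece involving $\nabla_n$, a $\rho$-piece, a Laplacian piece (where one uses the Leibniz rule $\Delta(\sigma f)=(\Delta\sigma)f+2n^a\nabla_a f+\sigma\Delta f$), and a $\J$-piece. The $\nabla_n f$ terms cancel between the $\nabla_n$ and $\Delta$ contributions, and the remaining derivative terms on $\sigma$ collect into $(\Delta+\J)\sigma$, which by the definition of $\rho$ equals $-d\rho$.

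After this simplification the right-hand side becomes $(d+2w)\big(n^2+2\sigma\rho\big)f$. The key input at this point is the scale-independent identity
$$
I_\sigma^2=h_{AB}I^A_\sigma I^B_\sigma=n^2+2\sigma\rho,
$$
obtained simply by feeding the components $[I^A_\sigma]_g=(\sigma,n_a,\rho)$ into the tractor metric formula \nn{trmet}. This is Proposition~\ref{I2-prop} at the component level. Substituting back gives $I\cdot D(\sigma f)-\sigma I\cdot Df=(d+2w)I_\sigma^2 f$, and dividing by $I_\sigma^2$ produces exactly $hf$, establishing $[x,y]=h$.

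The main obstacle is purely organisational: the second-order nature of $I\cdot D$ means one must carefully track every weight-dependent coefficient and invoke the Leibniz rule for $\Delta$ correctly. Once this bookkeeping is done, the collapse to $(d+2w)I^2 f$ is forced by the formula for $I^2$, and no further input (in particular no curvature identities beyond those already built into the Thomas-D operator) is required. It is worth remarking that the result is scale-independent even though the intermediate computation is performed in a scale; this is guaranteed a priori by the conformal invariance of each of $\sigma$, $h$, and $I\cdot D$, so the final identity can be checked in any convenient $g\in\cc$.
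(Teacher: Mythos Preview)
Your proof is correct. The paper does not itself prove Proposition~\ref{thesl2}; it cites the result from~\cite{GW}. That said, the paper does establish the equivalent commutator identity $[I\cdot D,\sigma^{k+1}]=I^2\sigma^k(k+1)(d+2\w+k)$ (Equation~\nn{algebra}, whose $k=0$ case is exactly your key computation) by a different route: it is obtained as a corollary of the Leibniz-failure formula for the Thomas-$D$ operator (Proposition~\ref{leib-fail} and Lemma~\ref{actonstuff}), which in turn is proved via the Fefferman--Graham ambient metric in Appendix~\ref{FG}. Remark~\ref{alphaC} then observes that \nn{algebra} is equivalently a consequence of the $\mathfrak{sl}(2)$ relations.

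Your approach---a direct in-scale computation using the explicit formula~\nn{IdotD} and the component expression $I_\sigma^2=n^2+2\sigma\rho$---is more elementary and self-contained than the ambient-metric argument, and has the virtue of making the cancellations transparent. The paper's route, by contrast, packages the computation inside the tractor Leibniz rule, which is more conceptual and generalises cleanly (e.g.\ to non-integer powers, as noted in Remark~\ref{alphaC}), but relies on the ambient construction. Both arrive at the same identity.
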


This operator  algebra was called a {\it solution generating algebra} 
in the setting of linear extension problems in~\cite{GW}, because it generates formal solutions via an  expansion in~$x$.
Here, it plays a similar {\it r\^ole} for the non-linear setting we treat.
In fact,
obstructions to smooth solutions of the extension problem $y f = 0$ (with appropriate conditions on $f$),
can be recovered from a 
 related tangential operator problem~\cite{GW} underpinned by the following, standard~$\frak{sl}(2)$  identities
\begin{equation}\label{yk}
[x,y^k]= y^{k-1}k(h-k+1)\, , \quad
[x^k,y]=x^{k-1} k(h+k-1)\, ,\quad
\mbox{ where } k\in {\mathbb Z}_{\geq1}\, .
\end{equation}
Tangential operators 
provide a link between ambient and hypersurface geometry:
\begin{definition}
Given a hypersurface~$\Sigma$ and a defining density~$\sigma$,   an operator~$P$, 
acting between smooth sections of vector bundles over~$M$, 
 is called {\it tangential}
if~$$P\circ x=x\circ \widetilde P\, ,$$ 
and~$\widetilde P$  is some smooth operator. 
\end{definition}
Observe that for smooth sections $f$ and $P$ tangential,  we have that $Pf|_\Sigma$ depends only on $f|_\Sigma$.
Thus tangential operators
canonically define hypersurface  operators, and hence our interest in them. 
The second identity in Equation~\nn{yk} implies that the operator $y^k$ is tangential acting on 
$\Gamma(\ct^\Phi M\big[\frac{k-d+1}{2}\big])$.
 This allows us to construct extrinsic conformal Laplacians along the hypersurface~$\Sigma$ in Section~\ref{EGJMS}, and in turn write holographic formul\ae\ for the obstruction density in Section~\ref{ASCObst}.

\subsection{Tractor calculus identities} \label{idents} 
We list here some identities that are required for the subsequent
discussion.  The key result is a characterization of how the Thomas D-operator
violates the Leibniz rule.

As a direct corollary of Lemma~\ref{leib0} from Appendix~\ref{FG} 
we have the following
useful rule for the Thomas D-operator acting on
products of tractors\footnote{This rule also appeared, in a rather different physics context, in~\cite{Joung}.}:
\begin{proposition}[Leibniz's failure]\label{leib-fail}
Let~$T_i\in \Gamma(\cT^\Phi M[w_{i}])$ for~$i=1,2$, and~$h_{i}:=d+2w_{i}$,~$h_{12}:=d+2w_1+2w_2-2$ with~$h_i\neq0\neq h_{12}$.
Then
\begin{equation}\label{ls}
\hD^A(T_1T_2)- (\hD^A T_1) \, T_2 - T_1(\hD^A T_2)=-\frac{2}{d+2w_1 + 2w_2 -2}\, X^A\, (\hD_B T_1)(\hD^B T_2)\, .
\end{equation}
\end{proposition}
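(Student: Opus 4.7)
The plan is to verify Equation~\nn{ls} by direct computation in a single chosen metric scale $g\in\cc$. Since every ingredient on both sides---the Thomas D-operator $\hD^A$, the canonical tractor $X^A$, and the tractor-metric pairing---is conformally invariant, agreement in one scale suffices. In the chosen scale, Definition~\ref{hD} together with Equation~\nn{Dform} gives the slot decomposition
\begin{equation*}
[\hD^A T_i]_g = \bigl(w_i T_i,\ \nabla_a T_i,\ -h_i^{-1}(\Delta + w_i\J)T_i\bigr),\qquad i=1,2,
\end{equation*}
so I would compute the three slots of the left-hand side of~\nn{ls} independently. The top slot gives $(w_1+w_2)T_1T_2 - (w_1T_1)T_2 - T_1(w_2T_2)=0$, and the middle slot vanishes by the ordinary Leibniz rule for the coupled Levi-Civita--tractor connection~$\nabla_a$. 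Thus the entire content of~\nn{ls} lives in the bottom slot, whose expansion relies only on $\Delta(T_1T_2)=(\Delta T_1)T_2 + 2(\nabla^aT_1)(\nabla_aT_2)+T_1(\Delta T_2)$ together with a careful gathering of weight-dependent coefficients.

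Next I would compute the right-hand side. Since $X^A$ has components $(0,0,1)$ in any scale, the only non-trivial slot of the right-hand side is again the bottom one, and using the tractor metric formula~\nn{trmet} the pairing $(\hD_B T_1)(\hD^B T_2)$ evaluates to a sum of the four term types $(\Delta T_1)T_2$, $T_1(\Delta T_2)$, $\J T_1T_2$, and $(\nabla^aT_1)(\nabla_aT_2)$. Matching the two sides term by term, the coefficients of $(\Delta T_i)\,T_j$ and of the gradient cross-term agree immediately once one notes the elementary relations $h_{12}-h_1=2w_2$ and $h_{12}-h_2=2w_1$, where $h_i=d+2w_i-2$ and $h_{12}=d+2w_1+2w_2-2$. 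Matching the $\J T_1T_2$ coefficients reduces to the algebraic identity
\begin{equation*}
\frac{w_1}{h_1}+\frac{w_2}{h_2}-\frac{w_1+w_2}{h_{12}} \,=\, \frac{2w_1w_2(h_1+h_2)}{h_1h_2h_{12}},
\end{equation*}
which after clearing denominators becomes $w_1h_2(h_{12}-h_1)+w_2h_1(h_{12}-h_2)=2w_1w_2(h_1+h_2)$ and so follows from the very same weight relations.

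The main---and only---obstacle is this algebraic bookkeeping in the bottom-slot computation; the structural parts, namely the vanishing of the top and middle slots and the recognition that both $X^A$ and the Leibniz remainder on the right are purely ``bottom-slot'' objects, are automatic. A more economical alternative would be to invoke Lemma~\ref{leib0} of Appendix~\ref{FG}, which records the analogous Leibniz failure for the un-normalised $D^A=h\,\hD^A$: after dividing through by $h_{12}$ and applying the weight relations above to reorganise the cross-terms, one recovers~\nn{ls} directly as the stated corollary, exactly as the paper claims.
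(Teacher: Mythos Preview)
Your direct slot computation in a fixed scale is correct: the top and middle slots vanish by the ordinary Leibniz rule, and your verification of the bottom slot---including the $\J$-coefficient identity---goes through cleanly. This is a genuinely different route from the paper's. The paper obtains Proposition~\ref{leib-fail} as a corollary of Lemma~\ref{leib0}, which is proved via the Fefferman--Graham ambient construction: there $\aD_A=(d+2\aNd_{\sX}-2)\aNd_A-\aX_A\bs{\Delta}$, and the Leibniz defect is seen transparently as the cross term $2(\aNd_B\tilde T_1)(\aNd^B\tilde T_2)$ in the ambient $\bs{\Delta}(\tilde T_1\tilde T_2)$. Your approach is more elementary---it needs only the slot formula~\nn{Dform} and the tractor metric~\nn{trmet}---while the ambient argument is more conceptual and explains \emph{why} the failure is exactly an $X^A$-term. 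Your closing remark about recovering~\nn{ls} from Lemma~\ref{leib0} is also correct (strictly one divides through by $h_1h_2h_{12}$, not just $h_{12}$), and that is precisely the step the paper takes. One small caution: you set $h_i=d+2w_i-2$, while the Proposition's hypothesis reads $h_i:=d+2w_i$; your convention is the one that makes $\hD^AT_i=h_i^{-1}D^AT_i$ and matches what the ambient proof in Appendix~\ref{FG} actually yields, so the computation is consistent---just be aware of the notational mismatch with the stated hypothesis.
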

From this we obtain the following: 
\begin{corollary}\label{leib-prod}
Let~$T_i\in \Gamma(\cT^\Phi M[w_{i}])$ for~$i=1,,\ldots,k$, and~$h_{i}:=d+2w_{i}$,~$h_{1\ldots k}:=d+2\sum_{i=1}^kw_i-2$ with~$h_i\neq0\neq h_{1\ldots k}$.
Then
\begin{equation}\label{lsmanytimesover}
\begin{split}
\hD^A(T_1T_2\ldots T_k)&-\sum_{i=1}^k T_1\ldots (\hD^A T_i) \ldots T_k\\&=
-\frac{2X^A}{d+2\sum_{i=1}^k w_i-2} \sum_{1\leq i<j\leq k} T_1 \ldots (\hD^B T_i) \ldots (\hD_B T_j) \ldots T_k\, .
\end{split}
\end{equation}
\end{corollary}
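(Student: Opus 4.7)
The plan is to prove the identity by induction on $k$, with the base case $k=2$ being exactly Proposition~\ref{leib-fail}. The main technical ingredient that will make the inductive step work is the identity
\begin{equation*}
X_B \hD^B T = w\, T \qquad \text{for } T\in\Gamma(\ct^\Phi M[w]),
\end{equation*}
which follows directly from the formula~\nn{Dform} together with the fact that $X_A$ projects onto the top slot of a tractor; in particular $X_A D^A T = (d+2w-2)w\, T$.

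For the inductive step, set $T := T_1 T_2 \cdots T_{k-1}$, which has weight $W_{k-1}:=\sum_{i=1}^{k-1}w_i$, and apply Proposition~\ref{leib-fail} to the product $T \cdot T_k$, noting that the total weight is $W_k = W_{k-1} + w_k$:
\begin{equation*}
\hD^A(T\, T_k) = (\hD^A T)\, T_k + T\,(\hD^A T_k) - \frac{2 X^A}{d+2W_k-2}\,(\hD_B T)(\hD^B T_k)\, .
\end{equation*}
Expanding $\hD^A T$ using the inductive hypothesis and rearranging, one finds that the quantity
\begin{equation*}
P_k := \hD^A(T_1\cdots T_k) - \sum_{i=1}^{k} T_1\cdots (\hD^A T_i) \cdots T_k
\end{equation*}
satisfies the recursion $P_k = P_{k-1}\, T_k - \tfrac{2X^A}{d+2W_k-2}(\hD_B T)(\hD^B T_k)$.

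The main subtlety is that $P_{k-1}$ carries the denominator $d+2W_{k-1}-2$, whereas the target formula for $P_k$ requires the denominator $d+2W_k-2$. This mismatch is resolved by again applying the inductive hypothesis, this time to $\hD_B T$, and then contracting with $\hD^B T_k$: the leading ``Leibniz'' part of $\hD_B T$ produces exactly the pairs with $j=k$ needed to complete the sum, while the residual $X_B$-term from $P_{k-1}$ contributes through the key identity as $X_B \hD^B T_k = w_k T_k$. A short algebraic simplification using
\begin{equation*}
-\frac{1}{d+2W_{k-1}-2} + \frac{2w_k}{(d+2W_{k-1}-2)(d+2W_k-2)} = -\frac{1}{d+2W_k-2}
\end{equation*}
then converts the denominator of the inductively inherited sum over pairs $i<j\leq k-1$ into the required $d+2W_k-2$.

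Combining these contributions gives $P_k = -\tfrac{2X^A}{d+2W_k-2}\sum_{1\leq i<j\leq k} T_1\cdots (\hD^B T_i)\cdots (\hD_B T_j)\cdots T_k$, which is exactly~\nn{lsmanytimesover}. The main obstacle is purely bookkeeping — tracking which pairs $(i,j)$ appear from which source and managing the denominator shift — and this is overcome entirely by the $\frak{sl}(2)$-style identity $X_B \hD^B = w$ acting on weight-$w$ tractors.
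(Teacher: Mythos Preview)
Your proof is correct and follows the same approach as the paper, which simply states that the result follows from a simple induction based on~\nn{ls}. You have filled in the details of that induction, including the denominator-shift identity and the use of $X_B\hD^B T_k = w_k T_k$, which is exactly what is needed to make the inductive step close.
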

\begin{proof}
The result follows from a simple induction based on~\nn{ls}.
\end{proof}

Among the many identities that follow from the above proposition, two are key for our purposes:
\begin{lemma}\label{actonstuff}
Let~$T\in \Gamma(\cT^\Phi M[w])$ and~$k\in {\mathbb Z}_{\geq 0}$ with~$d+2k+2w-2\neq0\neq d+2w-2$. 
Then
\begin{equation} \label{hDsip}
\hD^A (\sigma^k T)-\sigma^k \hD^A T = k\,  \sigma^{k-1}I^A T
- \frac{2k\,  X^A\sigma^{k-1}I\cdot D \, T}{(d\!+2k\!+2w\!-2)(d+2w-2)}
-\frac{k(k-1)X^A\sigma^{k-2}I^2 T }{d+2k+2w-2} \, .
\end{equation}
\end{lemma}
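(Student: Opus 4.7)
The natural strategy is to view $\sigma^k T$ as a $(k+1)$-fold product $\sigma\cdot\sigma\cdots\sigma\cdot T$, with $k$ factors of weight $1$ and one factor of weight $w$, and then directly invoke Corollary~\ref{leib-prod}. The total weight is $k+w$, so the denominator $d+2\sum w_i-2$ appearing in that corollary becomes $d+2k+2w-2$, which is nonzero by hypothesis. The condition $h_i\neq 0$ on each individual factor reduces to $d+2w-2\neq 0$ (needed on the $T$-factor) and $d\neq 0$ (automatic), which is also in the hypotheses.

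First, I would evaluate the ``singleton'' sum $\sum_{i=1}^{k+1} T_1\cdots(\hD^A T_i)\cdots T_{k+1}$. The $k$ terms in which $\hD^A$ hits a $\sigma$ each contribute $\sigma^{k-1}(\hD^A\sigma)\,T=\sigma^{k-1}I^A T$, since by definition $\hD^A\sigma=I^A_\sigma$. The single remaining term in which $\hD^A$ hits $T$ gives $\sigma^k \hD^A T$. Together these yield $k\sigma^{k-1}I^A T+\sigma^k\hD^A T$, matching two of the terms in the asserted identity.

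Next, I would evaluate the ``pair'' sum $\sum_{i<j}T_1\cdots(\hD^B T_i)\cdots(\hD_B T_j)\cdots T_{k+1}$, splitting it into two types of pairs. The $\binom{k}{2}=\frac{k(k-1)}{2}$ pairs of $\sigma$-factors each contribute $\sigma^{k-2}(\hD^B\sigma)(\hD_B\sigma)\,T = \sigma^{k-2}I^2 T$. The $k$ pairs consisting of one $\sigma$ and the $T$-factor each contribute $\sigma^{k-1}(\hD^B\sigma)(\hD_B T)=\sigma^{k-1}I_B\hD^B T = \frac{1}{d+2w-2}\,\sigma^{k-1}\,I\cdot D\,T$, where the last step uses Definition~\ref{hD} on $T$ (which has weight $w$). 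Multiplying by the prefactor $-\frac{2X^A}{d+2k+2w-2}$ from Corollary~\ref{leib-prod} and combining the two types of pairs produces exactly the two $X^A$-terms on the right-hand side of~\eqref{hDsip}. Rearranging gives the stated identity.

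The only potential pitfall is the combinatorial bookkeeping of pairs and the handling of the two distinct $\hD$-normalisations (for the weight-$1$ factors $\sigma$ versus the weight-$w$ factor $T$). Once $\hD^B T$ is correctly rewritten as $\frac{1}{d+2w-2}D^B T$ in the mixed $\sigma$-$T$ pairs, everything assembles algebraically; no further identity beyond Corollary~\ref{leib-prod} is required.
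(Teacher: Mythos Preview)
Your proof is correct and follows exactly the paper's approach: the paper's proof simply reads ``This result is a direct application of Equation~\nn{lsmanytimesover},'' and you have carried out precisely that application, with the combinatorics of the singleton and pair sums worked out correctly.
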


\begin{proof}
This result is a direct application of Equation~\nn{lsmanytimesover}.
\end{proof}

An immediate corollary of this Lemma, for symmetrized tensor products, will be particularly useful:

\begin{corollary}
Let~$T\in \Gamma(\cT^\Phi M[w])$ and~$k\in {\mathbb Z}_{\geq 0}$ with~$d+2k+2w-2\neq0\neq d+2w-2$. 
Then
\begin{equation}\label{hD2}
\begin{split}
\Big(\hD_A (\sigma^k  T)\Big)\! &\odot\!\Big(\hD^A (\sigma^k  T)\Big) = \sigma^{2k}\big(\hD_A\,  T\big)\odot \big(\hD^A\,  T\big) \\[2mm]
&+  \frac{2k\,  (d-2)}{d+2k+2w-2}\, T\odot \Big[\frac{\sigma^{2k-1}\,  I\cdot D\, T}{d+2w-2} + \frac{(k\,  d+2w)\, \sigma^{2k-2}I^2 T}{2(d-2)} \Big]\, .
\end{split}
\end{equation}
\end{corollary}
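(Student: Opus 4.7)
The plan is to apply Lemma \ref{actonstuff} directly inside both factors of the symmetric tractor product and to reduce all resulting terms using a small list of contractions with $X^A$. Writing Lemma \ref{actonstuff} as
\[
\hD^A(\sigma^k T) = \sigma^k \hD^A T + k\sigma^{k-1} I^A T + \alpha X^A,
\]
with $\alpha := -\tfrac{2k\sigma^{k-1} I\cdot D T}{(d+2k+2w-2)(d+2w-2)} - \tfrac{k(k-1)\sigma^{k-2} I^2 T}{d+2k+2w-2}$, I would expand the left hand side into the resulting $4\times 4$ array of contracted symmetric-product terms.

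The key simplifying observations are the standard identities $X_A X^A = 0$, which immediately kills the four terms involving two factors of $X^A$; $X_A I^A = \sigma$, which follows from $I=\hD\sigma$ and the formula \nn{Dform} at weight $w=1$; and $X_A \hD^A T = w T$ for $T\in \Gamma(\cT^\Phi M[w])$, which is read off from the top slot of the explicit formula \nn{Dform}. The mixed term involving $\sigma^k\hD_A T$ and $k\sigma^{k-1}I^A T$ is rewritten using $I_A\hD^A T = (d+2w-2)^{-1}\,I\cdot D\, T$, which is simply the definition of $\hD$.

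With these reductions, the surviving contributions organize into two independent structures: $T\odot(I\cdot D\,T)$ and $T\odot(I^2 T)$. Collecting the coefficient of $T\odot(I\cdot D T)\, \sigma^{2k-1}$ from the three surviving cross-terms gives
\[
\frac{2k}{d+2w-2}\Bigl[1 - \frac{2(w+k)}{d+2k+2w-2}\Bigr] = \frac{2k(d-2)}{(d+2w-2)(d+2k+2w-2)}\, ,
\]
which matches the first term inside the square bracket of \nn{hD2}. Collecting the coefficient of $T\odot(I^2 T)\,\sigma^{2k-2}$ gives
\[
k^2 - \frac{2k(k-1)(w+k)}{d+2k+2w-2} = \frac{k(kd+2w)}{d+2k+2w-2},
\]
after cancellation of the $k^2$ and $k^2 w$ pieces; this matches the second term.

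The computation itself is entirely mechanical once the substitutions are made; the only thing that needs care is the book-keeping of signs and factors of $\sigma$ in the four-by-four expansion, and the verification that the two rational simplifications above produce precisely the factor $\frac{2k(d-2)}{d+2k+2w-2}$ that is pulled out on the right-hand side of \nn{hD2}. No further tractor identity is required beyond those already used to establish Lemma \ref{actonstuff}, which is why the statement is advertised as an immediate corollary.
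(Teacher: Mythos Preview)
Your proposal is correct and is exactly the computation the paper has in mind: the corollary is stated without proof as an immediate consequence of Lemma~\ref{actonstuff}, and you have carried out precisely the expansion and contraction that verifies it. The only cosmetic point is that your ``$4\times 4$ array'' is really a $3\times 3$ one (unless you split the two summands of $\alpha$), but since all $X_A X^A$ terms vanish this makes no difference to the bookkeeping.
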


Contracting Equation~\nn{hDsip} with the scale tractor~$I^A$ and stripping off the arbitrary tractor~$T$ gives the operator identity:
\begin{equation}\label{algebra}
[I\cdot D,\sigma^{k+1}] = I^2 \sigma^k\,(k+1)(d+2\w+k)\, .
\end{equation}
Here,~$\w=h-\frac d2$ is the weight operator~$\w:\Gamma(\cT^\Phi M[w])\to \Gamma(\cT^\Phi M[w])\ni T$ with~$\w T=wT$.

\begin{remark}\label{alphaC}
The above identity is also a direct consequence of the $\frak{sl}(2)$ algebra of Proposition~\ref{thesl2} ({\it cf}.  Equation~\nn{yk}).
The ambient space method used to establish Lemma~\ref{leib0} in Appendix~\ref{FG} can also be employed, for positive~$\sigma$, to extend the validity of Lemma~\ref{actonstuff}, its corollary  and Display~\nn{algebra} to~$k$ replaced by any~$\alpha\in {\mathbb R}$ (see~\cite[Equation 5.9]{GW}).
\end{remark}

\section{Conformal hypersurfaces and the extension problem}
\label{cyp}

Throughout the
following sections we shall assume that~$M$ is oriented, ~$\dim (M)\geq 3$, and that any
hypersurface~$\Sigma~$ is the zero locus of some smooth defining
function.

\subsection{Conformal hypersurface invariants} \label{chi}
In Section~\ref{definv-sec} we defined (scalar and tensor-valued)
invariants of a Riemannian hypersurface~$\Sigma$. We now require the
analogous concept for hypersurfaces in a conformal manifold.
\begin{definition}\label{chi-def}
A {\em weight~$w$ conformal covariant} of a hypersurface~$\Sigma$ is a
Riemannian hypersurface invariant~$P(\Sigma,g)$ with the property that
$P(\Sigma,\Omega^2 g)= \Omega^w P(\Sigma,g)$, for any smooth positive
function~$\Omega$.
Any such covariant determines an invariant
section of~$\ce \Sigma[w]$ that we
shall denote~$P(\Sigma; \bg)$, where~$\bg$ is the conformal
metric of the conformal manifold~$(M,[g])$. We shall say that~$P(\Sigma; \bg)$ is a {\em
  conformal invariant} of~$\Sigma$. When $\Sigma$ is understood
  by context, the term {\em hypersurface conformal invariant}
will refer to densities or weighted tensor fields which arise this way.

\end{definition}

\noindent In particular one may na\"{\i}vely attempt to construct
conformal hypersurface invariants by seeking linear combinations of
Riemannian hypersurface invariants of the form \nn{parcon} that have
the required conformal behaviour. But this method is intractible except at
the lowest order.

\subsection{The extension problem} \label{main}
We are now ready to treat the main problem. In the conformal setting
it is efficient to use defining densities rather than defining
functions to describe hypersurfaces analytically.

If~$\sigma$ is a defining density for some hypersurface~$\Sigma$, and a background metric ($g \in \cc$
on~$M$) is chosen, we shall write~$n_\si:=\nabla \si$ (or simply~$n$
if~$\si$ is understood), where~$\nabla$ is the Levi-Civita connection. 
\begin{definition}\label{normal-den}
A defining density~$\si$ for~$\Sigma$ is said to  be {\em normal}\, if~$$n_\sigma^{\, 2}:=\bg^{-1}(n_\sigma,n_\sigma)=1+\O(\sigma)\, .$$
\end{definition}
\noindent Note that if~$s$ is a normal defining function in the scale 
$g=\tau^{-2}\bg$, where~$\tau\in \Gamma(\ce[1])$ is nowhere zero, then~$\si:=s\tau$ is a
normal defining density for~$\Sigma$. In particular
$\Sigma$ has a normal defining density.

Observe now that if~$\si$ is a defining density for~$\Sigma$ then,
computing in terms of any background metric, we have~$I^2_\si = n^2
+2\si \rho$ for some density~$\rho\in \Gamma(\ce [-1])$. This follows
from~\nn{trmet} and~\nn{sctrac-def}. So~$\si$ is a normal defining
density if and only if
$$
I^2_\si =1 + \O(\sigma),
$$
and this statement is conformally invariant.

Thus we may always find~$\si$ such that~$I^2_\si =1 + \si A_1$ for some
smooth density~$A_{1}\in \Gamma(\ce[-1])$.  Now, by Proposition~\ref{I2-prop},
Problem~\ref{conffirststep} is equivalent to the following.
\begin{problem}\label{I2-prob} 
Find a smooth defining density~$\bar \sigma$ such that
\begin{equation}\label{ind}
I^2_{\bar\si}=1 + \bar{\sigma}^{\ell} A_\ell\, ,
\end{equation}
for some smooth~$A_\ell\in \Gamma(\ce[-\ell])$,
where~$\ell \in\mathbb{N}\cup\infty$ is as high as possible.
\end{problem}
To treat this it is natural to  set up a recursive approach similar to that used to solve Problem~\nn{Riemannsfirststep}.
\begin{lemma}\label{Isquare}
Suppose~$\sigma\in \Gamma(\ce[1])$  defines the  hypersurface~$\Sigma$ in~$(M,\cc)$,  and 
~$$
 I_\sigma^{\, 2}=1+\sigma^k A_k\, , \qquad A_k\in \Gamma(\ce[-k])\, , \quad k\geq 1.
~$$
Then, if~$k\neq d$,  there exists~$f_k\in \Gamma(\ce[-k])$, unique to ${\mathcal O}(\sigma)$, such that the scale tractor~$I_{\sigma'}$ of the new defining density~$\sigma':=\sigma+\sigma^{k+1} f_k$ obeys
$$
I_{\sigma'}^{\, 2}=1+\O(\sigma^{k+1})\, .
$$

If ~$k=d$, then for any~$f\in\Gamma(\ce[-d])$ and~$\sigma':=\sigma+\sigma^{d+1} f$, we have
$$
I_{\sigma'}^{\, 2}=I_\sigma^{\, 2}+\O(\sigma^{d+1})\, . 
$$
\end{lemma}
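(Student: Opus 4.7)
The plan is to substitute $\sigma'=\sigma+\sigma^{k+1}f_k$ into $I_{\sigma'}^A:=\hat D^A\sigma'$, expand $I_{\sigma'}^2=h_{AB}I_{\sigma'}^A I_{\sigma'}^B$, and track contributions modulo $O(\sigma^{k+1})$. The essential tool is Lemma \ref{actonstuff}, applied with $T=f_k$ (weight $w=-k$) and power $k+1$. In this case $d+2(k+1)+2(-k)-2=d$, so the denominators collapse to $d$ and the formula gives
\begin{equation*}
\hat D^A(\sigma^{k+1}f_k)=\sigma^{k+1}\hat D^A f_k+(k+1)\sigma^k I^A f_k-\tfrac{2(k+1)\sigma^k}{d(d-2k-2)}X^A(I\cdot D f_k)-\tfrac{k(k+1)\sigma^{k-1}}{d}X^A I^2 f_k.
\end{equation*}

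Next I form $I_{\sigma'}^2=I^2+2\,I_A\hat D^A(\sigma^{k+1}f_k)+\bigl[\hat D^A(\sigma^{k+1}f_k)\bigr]^2$. The last piece is $O(\sigma^{2k})\subseteq O(\sigma^{k+1})$ since $k\geq1$. For the cross term the key observation is $X^A I_A=\sigma$ (read off from the splitting $[I^A_\sigma]_g=(\sigma,n_a,\rho)$ together with the tractor metric \nn{trmet}); so every explicit $X^A$ above, once contracted with $I_A$, supplies an extra power of $\sigma$. The term $\sigma^{k+1}\hat D^A f_k$ is already $O(\sigma^{k+1})$, and the $X^A(I\cdot D f_k)$-term becomes $O(\sigma^{k+1})$ after the $X\!\cdot\! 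I=\sigma$ contraction. Using $I^2=1+O(\sigma^k)$ and collecting the surviving $\sigma^k$ contributions yields
\begin{equation*}
I_{\sigma'}^2=1+\sigma^k\Bigl[A_k+\Bigl(2(k+1)-\tfrac{2k(k+1)}{d}\Bigr)f_k\Bigr]+O(\sigma^{k+1})=1+\sigma^k\Bigl[A_k+\tfrac{2(k+1)(d-k)}{d}f_k\Bigr]+O(\sigma^{k+1}).
\end{equation*}

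For $k\neq d$ the coefficient $\frac{2(k+1)(d-k)}{d}$ is nonzero, so I would solve
\begin{equation*}
f_k\stackrel{\Sigma}{=}-\tfrac{d}{2(k+1)(d-k)}\,A_k,
\end{equation*}
which fixes $f_k|_\Sigma$, and hence $f_k$ uniquely up to $O(\sigma)$, since any further modification by $\sigma\,g$ only alters the square at order $\sigma^{k+1}$. When $k=d$ the coefficient vanishes, so no choice of $f$ affects the $\sigma^d$-coefficient and $I_{\sigma'}^2=I_\sigma^2+O(\sigma^{d+1})$ for every $f\in\Gamma(\ce[-d])$; this is the asserted obstruction. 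The main technical nuisance is the bookkeeping of the two $X^A$-terms from Lemma \ref{actonstuff}, including the denominator $d-2k-2$ that becomes singular at the exceptional weight $k=(d-2)/2$; that term nevertheless carries an $X^A$ and hence only contributes at order $\sigma^{k+1}$, so its apparent singularity is immaterial for the $\sigma^k$-coefficient we care about. The conceptual point is that the $+2(k+1)$ from the $I^A$-piece and the $-2k(k+1)/d$ from the $X^A I^2$-piece combine to produce the factor $(d-k)$, which is the source of the Loewner--Nirenberg-type obstruction at $k=d$.
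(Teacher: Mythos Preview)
Your proof is correct and follows essentially the same strategy as the paper's: both expand $I_{\sigma'}^2=I_\sigma^2+2I_A\hD^A(\sigma^{k+1}f_k)+[\hD(\sigma^{k+1}f_k)]^2$, isolate the $\sigma^k$-coefficient, and arrive at the same formula $f_k=-\tfrac{d}{2(k+1)(d-k)}A_k$. The only difference is that the paper recognises the cross term as $\tfrac{2}{d}\,I\!\cdot\! D(\sigma^{k+1}f_k)$ and then applies the commutator identity~\nn{algebra} (equivalently the $\mathfrak{sl}(2)$ relation of Proposition~\ref{thesl2}) directly, whereas you first expand $\hD^A(\sigma^{k+1}f_k)$ via Lemma~\ref{actonstuff} and then contract with $I_A$; these are equivalent, since~\nn{algebra} is obtained precisely by that contraction. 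The paper's route is marginally cleaner at the exceptional value $k=(d-2)/2$, where Lemma~\ref{actonstuff} as stated does not apply (its hypothesis $d+2w-2\neq 0$ fails for $T=f_k$), but your observation that the affected terms contribute only at order $\sigma^{k+1}$ is correct and suffices.
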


\begin{proof}
Let~$I:=I_\sigma$ and first observe 
$$
\big(\hD \sigma'\big)^2=I^2 +\frac2d \, I\cdot D \big(\sigma^{k+1} f_k\big) + \Big[\hD \big(\sigma^{k+1} f_k\big)\Big]^2\, .
$$
Then using Eq.~\nn{hD2} and~\nn{algebra} we have
\begin{equation}\label{induct}
\big(\hD \sigma'\big)^2=1+\sigma^k A_k + \frac2d\,  \sigma^k\, (k+1) (d-k) f_k +\O(\sigma^{k+1})\, . 
\end{equation}
Thus, when~$k\neq d$, setting \begin{equation}\label{A2f} f_k=-\frac{d}{2(d-k)(k+1)}\,  A_k\, ,\end{equation} gives the first result. The result at~$k=d$ follows directly from Eq.~\nn{induct}.
\end{proof}

The first main result now follows:
\begin{theorem}\label{obstr}
There is a distinguished defining density~$\bar\sigma\in \Gamma(\ce[1])$, unique to~$\O(\sigma^{d+1})$ where~$\sigma$ is any given defining density, such that 
\begin{equation}\label{ddens}
I^{\, 2}_{\bar \sigma}=1+\bar\sigma^d B_{\bar \sigma}\, , 
\end{equation}
for some smooth density~$B_{\bar \sigma}\in \Gamma(\ce[-d])$.
Moreover~$\bar \sigma$ may be given by a canonical formula~$\bar\sigma(\sigma)$ depending (smoothly) only on~$(M,\cc,\sigma)$.
\end{theorem}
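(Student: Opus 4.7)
The plan is to prove Theorem \ref{obstr} by iterating Lemma \ref{Isquare}, starting from a normal defining density and improving the order of the remainder in $I^2_\sigma - 1$ one step at a time, until the forbidden value $k=d$ is reached. The lemma does essentially all the work; the task is to assemble the inductive construction and argue uniqueness.

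Starting point: by the discussion immediately preceding Problem \ref{I2-prob} we may, at the outset, replace the given defining density $\sigma$ by a normal defining density $\sigma_1$ (obtained by rescaling $\sigma$ by a positive density of weight $0$), so that
\[
I^2_{\sigma_1}=1+\sigma_1 A_1,\qquad A_1\in\Gamma(\ce[-1]).
\]
This is a canonical $\O(\sigma^2)$-preserving modification of $\sigma$. Now apply Lemma \ref{Isquare} inductively. Suppose for some $1\leq k\leq d-1$ we have constructed $\sigma_k$ with
\[
I^2_{\sigma_k}=1+\sigma_k^k A_k,\qquad A_k\in\Gamma(\ce[-k]).
\]
Since $k\neq d$, the lemma supplies a unique (modulo $\O(\sigma_k)$) density $f_k=-\tfrac{d}{2(d-k)(k+1)}A_k$ such that
\[
\sigma_{k+1}:=\sigma_k+\sigma_k^{k+1}f_k
\]
satisfies $I^2_{\sigma_{k+1}}=1+\O(\sigma_{k+1}^{k+1})$. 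Here we use that $\sigma_{k+1}$ and $\sigma_k$ agree to order $\sigma^{k+1}$, so ``$\O(\sigma_k^{k+1})$'' and ``$\O(\sigma_{k+1}^{k+1})$'' are interchangeable. Because $k+1\leq d$, we may iterate until $k=d$, yielding $\bar\sigma:=\sigma_d$ with
\[
I^2_{\bar\sigma}=1+\bar\sigma^d B_{\bar\sigma}
\]
for some $B_{\bar\sigma}\in\Gamma(\ce[-d])$, which is the asserted form \eqref{ddens}.

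For uniqueness to $\O(\sigma^{d+1})$, suppose $\bar\sigma'$ is any other defining density satisfying \eqref{ddens}. Writing $\bar\sigma'=\bar\sigma+\bar\sigma^{k+1}g_k+\O(\bar\sigma^{k+2})$ for the smallest $k\geq 1$ at which they disagree (so $g_k\not\equiv 0$ on $\Sigma$), Equation \eqref{induct} in the proof of Lemma \ref{Isquare} gives
\[
I^2_{\bar\sigma'}-I^2_{\bar\sigma}=\tfrac{2}{d}\,\bar\sigma^k(k+1)(d-k)\,g_k+\O(\bar\sigma^{k+1}).
\]
If $k<d$ then the left side lies in $\bar\sigma^d\Gamma(\ce[-d])$, forcing $g_k|_\Sigma=0$, a contradiction. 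Hence $k\geq d$, i.e.\ $\bar\sigma'\equiv\bar\sigma\pmod{\O(\sigma^{d+1})}$. The second clause of Lemma \ref{Isquare} further confirms that modifying $\bar\sigma$ at order $\sigma^{d+1}$ does not disturb $I^2$ at the obstruction order, so $B_{\bar\sigma}|_\Sigma$ is independent of the representative chosen.

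Finally, naturality and the canonical formula for $\bar\sigma(\sigma)$ follow by inspection of the construction: each $f_k$ is produced from $A_k$ by the explicit algebraic formula \eqref{A2f}, and each $A_k$ is extracted from $I^2_{\sigma_k}-1$ via the Thomas $D$-operator and conformal tractor connection applied to $\sigma_k$ and to the ambient data $(M,\cc)$. Composing these steps produces a universal polynomial/differential formula $\bar\sigma=\bar\sigma(\sigma)$ in the jets of $\sigma$ and the conformal structure. Since no step in the induction hits the forbidden value $k=d$, there is no obstacle to carrying it out to all orders up to $d$; the only genuine subtlety is the bookkeeping ensuring the ambiguity at each order is exactly $\O(\sigma^{k+1})$, which is precisely what Lemma \ref{Isquare} gives.
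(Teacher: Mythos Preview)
Your proof is correct and follows essentially the same approach as the paper: both establish existence by inductively applying Lemma~\ref{Isquare} starting from a normal defining density, and deduce uniqueness from the uniqueness-mod-$\O(\sigma)$ clause of that lemma (equivalently, from the nonvanishing coefficient $(d-k)(k+1)$ in~\eqref{induct}). Your uniqueness argument is spelled out in slightly more detail than the paper's, and your final remark about $B_{\bar\sigma}|_\Sigma$ being independent of the representative actually anticipates Theorem~\ref{main-Th}; the only quibble is that the passage to a normal defining density is an $\O(\sigma)$-level modification, not $\O(\sigma^2)$-preserving, though this does not affect the argument.
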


\begin{proof}
Existence follows from the Lemma, by induction, while uniqueness
follows from Eq.~\nn{A2f} determining the improvement term of the form
$\sigma^{k+1} f_k$ where~$f_k=c_k A_k+\O(\sigma)$. The constant~$c_k$ is
determined and non-zero (when~$k\neq d$) at each order. The last
statement is immediate by the construction given in the induction.
\end{proof}

Given the above theorem, we make the following definition:

\begin{definition}\label{cunit}
We say that a smooth defining density~$\bar\sigma\in \Gamma(\ce[1])$, is a 
{\it conformal unit  defining density}  if it obeys
\begin{equation}\label{I2def}I^{\, 2}_{\bar \sigma}=1+\O(\bar\sigma^d)\, .\end{equation}
\end{definition}

\begin{remark}
It follows from Theorem~\ref{obstr}
that a conformal hypersurface embedding determines an
ambient metric (singular along the hypersurface) from the conformal class, up to the order given.
This has a host 
of applications. For example, 
as we shall later show, 
given a conformal unit defining density, there then exist distinguished extensions of hypersurface quantities.
%
\end{remark}

\begin{theorem}\label{main-Th}
Given two normal defining densities~$\sigma$ and~$\sigma'$, and~$B_{\bar \sigma}$ and~$\bar\sigma(\cdot )$  as determined in Theorem~\ref{obstr} above, then
$$
B_{\bar \sigma(\sigma)}=B_{\bar\sigma(\sigma')}+\O(\sigma)\, .
$$ 
\renewcommand{\B}{{\mathcal B}}
Thus~$\B:=B_{\bar\sigma( \cdot )}\big|_\Sigma$ is independent of~$\sigma$ and determined by~$(M,\cc,\Sigma)$.
\end{theorem}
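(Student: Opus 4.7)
The plan is to first reduce the theorem to a uniqueness assertion for conformal unit defining densities, and then combine that with the $k=d$ clause of Lemma~\ref{Isquare}. By Theorem~\ref{obstr}, both $\bar\sigma := \bar\sigma(\sigma)$ and $\bar\sigma' := \bar\sigma(\sigma')$ are smooth conformal unit defining densities for $\Sigma$, so it suffices to prove the following: any two smooth conformal unit defining densities $\tau,\tau'$ for $\Sigma$ satisfy
\[
\tau' = \tau + \tau^{d+1}f, \qquad f\in\Gamma(\ce[-d]).
\]
Granted this, the $k=d$ case of Lemma~\ref{Isquare} yields $I^2_{\tau'} = I^2_{\tau} + \O(\tau^{d+1})$. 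Expanding both sides according to \eqref{ddens} and dividing by $\tau^d$ (valid off $\Sigma$) gives $B_{\tau'} = B_\tau + \O(\tau)$, so $B_{\tau'}|_\Sigma = B_\tau|_\Sigma$, which is precisely the statement of the theorem.

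For the intermediate claim, I would write $\tau' = \Phi\tau$ with $\Phi$ a positive smooth function, and show inductively that $\Phi-1 = \O(\tau^k)$ for each $k\in\{1,\dots,d\}$. The base case $k=1$ follows by restricting to $\Sigma$: using the Leibniz-failure formula (Proposition~\ref{leib-fail}) to expand $\hD^A(\Phi\tau)$, every correction term beyond $\Phi^2 I^2_\tau$ in the resulting expression for $I^2_{\tau'}$ carries an explicit factor of $\tau$, so $I^2_{\tau'}|_\Sigma = \Phi^2|_\Sigma$. The conformal unit property of both $\tau$ and $\tau'$ then forces $\Phi|_\Sigma = 1$, using positivity. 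For the inductive step, suppose $\Phi = 1 + \tau^k\alpha$ with $\alpha\in\Gamma(\ce[-k])$ and $1\le k\le d-1$. Then $\tau' = \tau + \tau^{k+1}\alpha$ fits precisely the modification in Lemma~\ref{Isquare}. Since $\tau$ is conformal unit, the quantity $A_k$ appearing on the right-hand side of Equation~\eqref{induct} vanishes for this value of $k$, and that equation reduces to
\[
I^2_{\tau'} = 1 + \tfrac{2}{d}(k+1)(d-k)\,\tau^k\alpha + \O(\tau^{k+1}).
\]
On the other hand $\tau'$ is itself conformal unit, so the left-hand side is $1 + \O(\tau^d) = 1 + \O(\tau^{k+1})$. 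Matching coefficients at order $\tau^k$, together with $(k+1)(d-k)\neq 0$ for $1\le k\le d-1$, forces $\alpha|_\Sigma = 0$, so $\Phi - 1 = \O(\tau^{k+1})$, closing the induction.

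The only delicate point is the inductive step, where one must verify that higher-order pieces of $\Phi$ do not contaminate the $\tau^k$-coefficient of $I^2_{\tau'}$. This is however precisely what the derivation of \eqref{induct} in Lemma~\ref{Isquare} establishes: the cross terms produced by \eqref{hDsip} and \eqref{algebra} all carry an additional factor of $\tau$ relative to the principal piece, so only the $\tau^k\alpha$ component of $\Phi-1$ is seen at the order being analysed. The induction terminates at $k=d$ with $\Phi = 1 + \tau^d g$ for some smooth density $g$, which is the required claim. As a byproduct, this argument also establishes uniqueness of the conformal unit defining density for $\Sigma$ modulo $\O(\tau^{d+1})$ (extending Theorem~\ref{obstr} to different starting densities), so $\mathcal{B}$ depends only on the conformal embedding $(M,\cc,\Sigma)$.
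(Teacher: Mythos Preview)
Your proof is correct and follows essentially the same approach as the paper's: both arguments show that any two conformal unit defining densities agree modulo $\O(\tau^{d+1})$ and then invoke the $k=d$ clause of Lemma~\ref{Isquare}. The paper compresses the first step into a single appeal to ``the uniqueness statement for the $f_k$'' from Lemma~\ref{Isquare}, whereas you unpack this into an explicit induction on the order of agreement of $\Phi$ with $1$; your inductive step is precisely the computation~\eqref{induct} with $A_k$ absorbed into the $\O(\tau^{k+1})$ remainder. One minor point: when you divide by $\tau^d$ at the end, note that \eqref{ddens} gives $I^2_{\tau'}=1+(\tau')^dB_{\tau'}$ rather than $1+\tau^dB_{\tau'}$, but since $(\tau')^d=\tau^d+\O(\tau^{2d})$ this is harmless.
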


\begin{proof}
If~$\sigma$ and~$\sigma'$ are two defining densities then, by the uniqueness statement for the~$f_k$ in the first part of Lemma~\ref{Isquare}, given a defining density~$\sigma$   we have
$$
\bar\sigma(\sigma')-\bar\sigma(\sigma)=\bar \sigma^{d+1} f\, ,$$ where~$f\in \Gamma(\ce[-d])$.
The result now follows from the last statement of Lemma~\ref{Isquare}.
\end{proof}

We shall call the hypersurface conformal invariant~$\B$ of Theorem
\ref{main-Th} the {\em ASC obstruction density}.

If~${\bar \sigma_{k-1}(\sigma)}$ obeys~$I_{\bar \sigma_{k-1}}^{\,
  2}=1+\O(\bar\sigma_{k-1}^{k})$, then~$$\frac{I_{\bar
    \sigma_{k-1}}^{\, 2}-1}{\bar \sigma_{k-1}^{\, k}}$$ is smooth
along~$\Sigma$. Thus, using Equations~\nn{induct} and~\nn{A2f} we set 
$$
\bar\sigma_{k}\, :=\ \bar\sigma_{k-1}\, \left[\, 1-\frac{d\, }{2}\frac{I_{\bar \sigma_{k-1}}^{\, 2}-1}{(d-k)(k+1)}\, \right]\, , \quad\quad k\neq d.
$$ 
Iterating this gives an explicit formula for the order~$\sigma^d$
solution to the boundary 
Problem~\ref{I2-prob}, as well
as its obstruction, which we summarise below.
\begin{proposition}\label{prod}
Let~$\sigma=:\bar\sigma_0$ be a normal defining density
for~$\Sigma$. Then Problem~\ref{I2-prob} is solved to order
$\ell=d$ by 
recursive use of the formula
$$\bar\sigma_k:=\sigma\,  \prod_{i=0}^{k-1}\left[\, 1-\frac{d}{2}\frac{I_{\bar \sigma_i}^{\, 2}-1}{(d-i-1)(i+2)}\, \right]\, ,$$
with also 
$$
I^2_{\bar\si_i}= (\hD^A \bar\si_i) \hD_A \bar\si_i , \quad \quad 1\leq k<d-1.
$$
The ASC obstruction density is the obstruction to solving Problem~\ref{I2-prob}  smoothly, and is given by 
\begin{equation}\label{obstruction}
\B=\left.\left[\frac{I_{\bar\sigma_{d-1}}^{\, 2}-1}{\sigma^{d}}\right]\right|_\Sigma\, .
\end{equation}
\end{proposition}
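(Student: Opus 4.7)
The plan is to iterate Lemma~\ref{Isquare} inductively, building up the defining density $\bar\sigma_k$ one order at a time, and then deduce the obstruction formula from Theorem~\ref{main-Th} together with a small rescaling check. The core observation is that the improvement step prescribed by Lemma~\ref{Isquare}, namely $\sigma \mapsto \sigma + \sigma^{k+1} f_k$ with $f_k = -dA_k/(2(d-k)(k+1))$, can be rewritten as the multiplicative correction
$$\sigma \;\longmapsto\; \sigma\Bigl[\,1 - \tfrac{d}{2}\,\frac{I^2_{\sigma} - 1}{(d-k)(k+1)}\Bigr],$$
since by hypothesis $\sigma^k A_k = I^2_\sigma - 1$. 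This is exactly the $i=k-1$ factor appearing in the proposition's product.

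For the induction, the base case $k=0$ is immediate from the assumption that $\sigma = \bar\sigma_0$ is a normal defining density, giving $I^2_{\bar\sigma_0} = 1 + O(\sigma)$. For the inductive step, suppose $I^2_{\bar\sigma_{k-1}} = 1 + O(\sigma^k)$. Each factor in the product defining $\bar\sigma_{k-1}$ is of the form $1 + O(\sigma)$, so the ratio $\bar\sigma_{k-1}/\sigma$ is a smooth unit near $\Sigma$; consequently ``$O(\sigma^k)$'' and ``$O(\bar\sigma_{k-1}^k)$'' are interchangeable, and we may write $I^2_{\bar\sigma_{k-1}} = 1 + \bar\sigma_{k-1}^k A_k$ for some smooth $A_k \in \Gamma(\ce[-k])$. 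For $1 \le k \le d-1$ we have $k \ne d$, so Lemma~\ref{Isquare} applies with $\bar\sigma_{k-1}$ in the role of $\sigma$; the prescribed improvement is precisely $\bar\sigma_k$ as defined by the recursion, and it satisfies $I^2_{\bar\sigma_k} = 1 + O(\sigma^{k+1})$. Telescoping from $k=1$ up to $k = d-1$ yields the stated product formula and $I^2_{\bar\sigma_{d-1}} = 1 + O(\sigma^d)$, which is the required order-$d$ solution to Problem~\ref{I2-prob}.

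For the obstruction formula, Theorem~\ref{obstr} applied to $\bar\sigma_{d-1}$ gives $I^2_{\bar\sigma_{d-1}} = 1 + \bar\sigma_{d-1}^d\, B_{\bar\sigma_{d-1}}$ with $B_{\bar\sigma_{d-1}}$ smooth, and Theorem~\ref{main-Th} identifies $\B = B_{\bar\sigma_{d-1}}|_\Sigma$. It remains to replace $\bar\sigma_{d-1}^d$ by $\sigma^d$ in the denominator. As noted above, every bracket in the product formula equals $1$ along $\Sigma$, so $(\bar\sigma_{d-1}/\sigma)|_\Sigma = 1$ and hence $(\bar\sigma_{d-1}/\sigma)^d|_\Sigma = 1$. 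Since $(I^2_{\bar\sigma_{d-1}} - 1)/\sigma^d$ is smooth near $\Sigma$ and equals $(\bar\sigma_{d-1}/\sigma)^d\, B_{\bar\sigma_{d-1}}$, restricting to $\Sigma$ produces the displayed formula for $\B$.

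No genuine obstacle arises: with Lemma~\ref{Isquare} and Theorem~\ref{main-Th} already established, the proof reduces to the bookkeeping above. The only point requiring mild care is the interchangeability of orders of vanishing measured against $\sigma$ versus against the intermediate densities $\bar\sigma_k$; once one records that these defining densities are proportional up to a smooth positive unit near $\Sigma$, both the inductive step and the final rescaling follow at once.
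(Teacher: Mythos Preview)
Your proof is correct and follows essentially the same approach as the paper: both iterate Lemma~\ref{Isquare} via the multiplicative rewrite of the correction $\sigma'=\sigma+\sigma^{k+1}f_k$ as $\sigma'=\sigma\bigl[1-\tfrac{d}{2}(I_\sigma^2-1)/((d-k)(k+1))\bigr]$, and then telescope the product. Your treatment is in fact more explicit than the paper's, which simply states the one-step recursion and says ``iterating this gives'' the product formula and the obstruction; in particular your justification that $(\bar\sigma_{d-1}/\sigma)|_\Sigma=1$ allows the replacement of $\bar\sigma_{d-1}^d$ by $\sigma^d$ in~\nn{obstruction} makes explicit a point the paper leaves to the reader.
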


\newcommand{\Ot}{\tilde\O}

\subsection{The higher order expansion and log terms}

To continue
the solution beyond order~$\ell=d$, we relax our smoothness
requirement to allow log terms. In particular we consider~$\log
\sigma$ which is defined as a section of a log density bundle; see
Section~2.1 of~\cite{GW}. Such a section is well defined away from~$\Sigma$ and
obeys the following generalization of the algebra~\nn{algebra}
\begin{equation}\label{logalgebra}
[I\cdot D,\log\sigma]=\frac{\, I^2}\sigma \, (d+2\w-1)\, .
\end{equation}
Thus we refine the notion of an order~$\O(\sigma^\ell)$ solution to allow for any finite power of~$\log\sigma$. 
Since we are solving for a density~$\sigma$, following~\cite{GW}, we must also introduce a second, {\it true scale}~$\tau\in \Gamma(\ce[1])$, {\it {\it i.e.}} a scale~$\tau$ that is nowhere vanishing on~$M$.
Thus, while~$\log \tau$ is a log density,~$\log(\sigma/\tau)=\log\sigma-\log\tau$ is a section of~$\ce M[0]$.

\begin{definition}
If densities~$f$ and~$g$ obey
$$
f=g+\sigma^\ell \sum_{j=0}^k\big(\log(\sigma/\tau)\big)^j C_j\, ,
$$
where~$C_j$ are smooth densities,~$k$ is any non-negative integer and $\tau$ is a true scale we write
$$
f=g+\Ot(\sigma^\ell)\, .
$$
\end{definition}

\medskip

\begin{proposition}\label{log}
Let~$\sigma$ be a conformal unit defining density for~$\Sigma$.
Then 
$$
\sigma':=\sigma \, \left[1+\frac d2\  {\log(\sigma/\tau)}\ \frac{I^2-1}{d+1}\right]\, ,
$$
obeys
$$
I_{\sigma'}^{\, 2}=1+\sigma^{d+1}\big[\log(\sigma/\tau)\, A + B'\big]=1+\Ot(\sigma^{d+1})\, ,
$$
for some smooth~$A,B'\in\Gamma(\ce[-d-1])$.
\end{proposition}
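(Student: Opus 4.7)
The plan is to mimic the recursive step of Lemma~\ref{Isquare}, but use a logarithmic correction to defeat the obstruction at order $\sigma^d$. By hypothesis we may write $I_\sigma^{\,2}=1+\sigma^d B$ with smooth $B\in\Gamma(\ce[-d])$. The last clause of Lemma~\ref{Isquare} shows that any purely smooth $\sigma^{d+1}f$ correction leaves $I^2$ unchanged to order $\sigma^d$ (reflecting the vanishing of the factor $(k+1)(d-k)$ at $k=d$). A $\log\sigma$ factor is therefore unavoidable, and its derivative will supply the $1/\sigma$ needed to convert the overall $\sigma^{d+1}$ prefactor into a nontrivial $\sigma^d$ contribution.

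Concretely, I would write $\sigma'=\sigma+\tilde f$ with $\tilde f:=\tfrac{d}{2(d+1)}\,\sigma^{d+1}\log(\sigma/\tau)B$. Since $\tilde f$ has weight $1$, the identity $I\cdot \hat D=\tfrac{1}{d}I\cdot D$ (on weight-$1$ objects) yields
\begin{equation*}
I^{\,2}_{\sigma'}=(\hat D\sigma')^2=I^2+\tfrac{2}{d}\,I\cdot D\tilde f+(\hat D\tilde f)^2.
\end{equation*}
The core computation is $I\cdot D\tilde f$. Apply the algebra \nn{algebra} with $k=d$ to the weight-$(-d)$ quantity $\log(\sigma/\tau)B$: the factor $(d+2\w+d)$ becomes $d+2(-d)+d=0$, so the commutator $[I\cdot D,\sigma^{d+1}]$ annihilates $\log(\sigma/\tau)B$ and
\begin{equation*}
I\cdot D\tilde f=\tfrac{d}{2(d+1)}\,\sigma^{d+1}\,I\cdot D\bigl[\log(\sigma/\tau)B\bigr].
\end{equation*}
Next, apply the log algebra \nn{logalgebra} to $B$: acting on weight $-d$, one has $[I\cdot D,\log\sigma]B=\tfrac{I^2}{\sigma}(d+2(-d)-1)B=-\tfrac{(d+1)I^2B}{\sigma}$. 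Since $\tau$ is a true scale, $\log\tau$ gives rise to smooth contributions only. Hence
\begin{equation*}
I\cdot D\bigl[\log(\sigma/\tau)B\bigr]=-\tfrac{(d+1)I^2B}{\sigma}+\log(\sigma/\tau)\,I\cdot DB+(\text{smooth}).
\end{equation*}

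Substituting $I^2=1+\sigma^d B$ and multiplying back by $\tfrac{2}{d}\sigma^{d+1}\cdot\tfrac{d}{2(d+1)}$, the leading $-\tfrac{(d+1)B}{\sigma}$ combines with $\sigma^{d+1}/(d+1)$ to produce exactly $-\sigma^d B$, which cancels the obstruction in $I^2-1=\sigma^d B$. All other contributions are of the form $\sigma^{d+1}$ times a sum of $\log(\sigma/\tau)$ and smooth densities of weight $-d-1$, i.e., they are $\tilde{\mathcal O}(\sigma^{d+1})$. A short count confirms that $(\hat D\tilde f)^2=\tilde{\mathcal O}(\sigma^{2d})$ (since the most singular derivative $\hat D(\log\sigma)$ only contributes $\sigma^{-1}$), and $2d\geq d+1$ for $d\geq 3$, so this term is absorbed into $\tilde{\mathcal O}(\sigma^{d+1})$. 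Collecting all terms in the form asserted gives $I^{\,2}_{\sigma'}=1+\sigma^{d+1}[\log(\sigma/\tau)A+B']$ for smooth $A,B'\in\Gamma(\ce[-d-1])$.

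The delicate points are (i) ensuring that the $\log\tau$-derived pieces are genuinely smooth (which is immediate because $\tau$ is nowhere vanishing, so $\log\tau$ is a smooth section of the relevant log-density bundle), and (ii) keeping careful track of weights when applying \nn{algebra} and \nn{logalgebra}. The rest is bookkeeping; the proof hinges on the elegant numerical coincidence that the coefficient $d+2\w+k$ from \nn{algebra} vanishes precisely where $d+2\w-1$ from \nn{logalgebra} equals $-(d+1)$, so that the logarithmic correction uniquely produces the needed counterterm and fixes the coefficient $\tfrac{d}{2(d+1)}$ in $\tilde f$.
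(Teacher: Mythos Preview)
Your proof is correct and follows essentially the same approach as the paper: mimic the recursive step of Lemma~\ref{Isquare}, use the algebra~\nn{algebra} to see that $[I\cdot D,\sigma^{d+1}]$ vanishes on weight~$-d$, and then apply the log algebra~\nn{logalgebra} to produce the counterterm $-(d+1)\sigma^d I^2 B$ that cancels the obstruction. The paper's proof is more terse, recording only the key computation $I\cdot D\bigl(\sigma^{d+1}\log(\sigma/\tau)B\bigr)=-\sigma^d(d+1)I^2 B+\tilde{\mathcal O}(\sigma^{d+1})$, but your expanded version and the estimate $(\hat D\tilde f)^2=\tilde{\mathcal O}(\sigma^{2d})$ match this exactly.
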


\begin{proof}
Because $\sigma$ is a conformal unit density we have $I_\sigma^2=1+\sigma^d B$.
The proof  now mimics  that of Lemma~\ref{Isquare}, save that when calculating the analog of Equation~\nn{induct}, one needs to apply 
the algebra~\nn{algebra} in conjunction with~\nn{logalgebra} to compute
$$
I\cdot D \,  \sigma^{d+1} \log(\sigma/\tau) B =
\sigma^{d+1} I\cdot D (\log\sigma-\log\tau) B =
- \sigma^d (d+1) I^2 B + \Ot(\sigma^{d+1})\, .
$$
Here the first equality relies on  the fact that~$\log(\sigma/\tau) B\in\Gamma(\ce[-d])$.
\end{proof}
\begin{remark}
Note that the obstruction density $B$ appears as the coefficient of the first logarithm term in a solution~$\bar\sigma$ to 
Problem~\ref{I2-prob} modified to require $I^2_\sigma=1+\tilde{\mathcal O}(\sigma^{d+1})$, namely
\begin{equation}\label{log-coeff}
\bar \sigma= \sigma + \sigma^{d+1}\Big[f+\frac{d}{2(d+1)}\log(\sigma/\tau) B\Big]+\tilde{\mathcal O}(\sigma^{d+2})\, ,
\end{equation}
where $\sigma$ is a conformal unit density and $f$ is a smooth density.
When the log term is present, by choosing the true scale $\tau$ appropriately, we may arrange for the density $f$ to vanish.
\end{remark}

An inhomogeneous version of the recursion of Proposition~\ref{prod} now  solves   Problem~\ref{I2-prob}, weakened to include log terms but now to all orders beyond the obstruction. The failure for the solution so obtained to  be unique is parameterised by the choice of true scale $\tau$.
\begin{proposition}
Let $k\geq d+1$ and
suppose the density $\sigma$ satisfies
$$
I^2_\sigma=1+\sigma^{k}\big[\log(\sigma/\tau)\, A + B^\prime+\tilde{\mathcal O}(\sigma)\Big]\, ,
$$
for smooth $A,B^\prime \in\Gamma(\ce[-k])$.
Then
$$
\sigma^\prime = \sigma \, \left[\, 1-\frac{d\, }{2}\frac{I_{\sigma}^{\, 2}-1}{(d-k)(k+1)}\, \right] + \frac d2 \frac{(d-2k-1)A}{(d-k)^2(k+1)^2}\, \sigma^{k+1} 
$$
obeys
$$
I^2_{\sigma^\prime}=1+\sigma^{k+1}
\big[\log(\sigma/\tau)\, A^\prime + B^{\prime\prime}+\tilde{\mathcal O}(\sigma)\Big]\, ,
$$
for smooth $A^\prime,B^{\prime\prime}\in\Gamma(\ce[-k-1])$.
\end{proposition}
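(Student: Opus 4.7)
The plan is to mimic the structure of the proof of Proposition~\ref{log}: write $\sigma^\prime = \sigma + \sigma^{k+1} F$ where
$$F = -\frac{d}{2(d-k)(k+1)}\Bigl[\log(\sigma/\tau)\, A + B^\prime + \tilde{\mathcal O}(\sigma)\Bigr] + \frac{d(d-2k-1)A}{2(d-k)^2(k+1)^2}\, ,$$
so that $F \in \Gamma(\ce[-k])$. Then expand
$$I^2_{\sigma^\prime} = I^2_\sigma + \frac{2}{d}\, I\cdot D\bigl(\sigma^{k+1} F\bigr) + \bigl(\hD(\sigma^{k+1} F)\bigr)^2 .$$
The last term is $\mathcal O(\sigma^{2k})$, which since $k\geq d+1\geq 4$ is absorbed in $\tilde{\mathcal O}(\sigma^{k+1})$. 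The middle term will be split using the two algebraic identities~\nn{algebra} and~\nn{logalgebra}, which govern respectively how $I\cdot D$ interacts with powers of $\sigma$ and with $\log\sigma$.

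Using~\nn{algebra} with weight $w=-k$ gives $[I\cdot D,\sigma^{k+1}]F = I^2 \sigma^k (k+1)(d-k)F$. Treating the pieces of $F$ separately, the first bracket in $F$ contributes at order $\sigma^k$ the term $-\sigma^k\bigl[\log(\sigma/\tau)A + B^\prime\bigr]$, which exactly cancels the $\log$ and non-log parts of $I^2_\sigma - 1$ at order $\sigma^k$. This is the analog of the cancellation in Lemma~\ref{Isquare}. However, unlike in the pure power-series case, applying $I\cdot D$ to the $\log$-containing piece produces an extra non-log contribution via~\nn{logalgebra}: writing $\log(\sigma/\tau) = \log\sigma - \log\tau$ and using $[I\cdot D,\log\sigma]A = \tfrac{I^2}{\sigma}(d-2k-1)A$ yields a non-log term of order $\sigma^k$ equal to $-\tfrac{(d-2k-1)A}{(d-k)(k+1)}\sigma^k$. (The $\log\tau$ commutator is smooth of weight $-k-1$, so it only contributes at order $\sigma^{k+1}$.) The final summand of $F$ is precisely the multiple of $A$ that, when fed through $\tfrac{2}{d}[I\cdot D,\sigma^{k+1}] = \tfrac{2}{d}I^2\sigma^k(k+1)(d-k)\cdot$, cancels this contamination; a routine arithmetic check fixes the coefficient to be $\tfrac{d(d-2k-1)}{2(d-k)^2(k+1)^2}$.

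Having secured order $\sigma^k$, I would then collect what remains at order $\sigma^{k+1}$. The $\log$-coefficient $A^\prime$ comes from $\tfrac{2}{d}\sigma^{k+1} I\cdot D(c_1 \log(\sigma/\tau)A)$ and equals $-\tfrac{I\cdot D A}{(d-k)(k+1)} \in \Gamma(\ce[-k-1])$, which is smooth. The new $B^{\prime\prime}$ collects (i) $\tfrac{2c_1}{d}I\cdot D B^\prime$, (ii) the $\log\tau$-generated smooth term $-\tfrac{2c_1}{d}[I\cdot D,\log\tau]A$, (iii) $\tfrac{2}{d}I\cdot D$ applied to the additive correction $c\sigma^{k+1}$, and (iv) leading subdominant pieces from $I^2=1+\mathcal O(\sigma^k)$. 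All such pieces are smooth and of weight $-k-1$. Any remaining contributions (higher powers of $\log(\sigma/\tau)$ at order $\sigma^{k+1}$ or any terms at order $\sigma^{k+2}$ and beyond) have the form $\sigma \cdot (\text{polynomial in }\log\,\cdot\text{smooth densities})$, hence are absorbed in $\tilde{\mathcal O}(\sigma)$ inside the bracket.

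The main obstacle is bookkeeping rather than conceptual: the identity $[I\cdot D,\log\sigma] = \tfrac{I^2}{\sigma}(d+2\w-1)$ converts logarithmic information into non-logarithmic information of one power lower in $\sigma$, and this conversion has to be tracked carefully and distinguished from the pure-power recursion of Lemma~\ref{Isquare}. In particular, one must be careful that the weight operator $\w$ in \nn{algebra} and \nn{logalgebra} acts on weight-$(-k)$ inputs $F$, $A$, $\log(\sigma/\tau)A$, etc., so that the precise factor $(d-2k-1)$ in the correction term is reproduced, and that $\log\tau$ — being smooth since $\tau$ is a true scale — contributes only to $B^{\prime\prime}$ and never spoils the leading structure.
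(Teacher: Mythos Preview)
Your proof is correct and follows essentially the same route as the paper's: both write $\sigma'=\sigma+\sigma^{k+1}F$, expand $I^2_{\sigma'}$, and use the $\mathfrak{sl}(2)$ identities~\nn{algebra} and~\nn{logalgebra} to verify the order-$\sigma^k$ cancellation, with the extra non-log term $(d-2k-1)g$ produced by~\nn{logalgebra} fixing the additive correction. The only cosmetic difference is that the paper packages the computation via the operator identity $I\cdot D\circ\sigma^{k+1}=\sigma^k\big[(d-k)(k+1)+\sigma I\cdot D\big]+\tilde{\mathcal O}(\sigma^{d+k})$ acting on $F=f+\log(\sigma/\tau)\,g$, whereas you track the commutators $[I\cdot D,\sigma^{k+1}]$ and $\sigma^{k+1}[I\cdot D,\log\sigma]$ separately.
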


\begin{proof}
The proof again closely mimics that of Lemma~\ref{Isquare}. We begin by computing the square of the scale tractor for an ansatz $\sigma^\prime=\sigma+\sigma^{k+1}F$ where $F=\big[f+\log(\sigma/\tau)\, g\big]$, and find (calling $I:=I_\sigma$)
$$
I_{\sigma^\prime}^2=\big(\hD \sigma^\prime\big)^2=1+\sigma^{k}\big[\log(\sigma/\tau)\, A + B^\prime+\tilde{\mathcal O}(\sigma)\Big]+
\frac2d\, I\cdot D\Big(\sigma^{k+1} F\Big)
+\tilde{\mathcal O}(\sigma^{k+2})\, .
$$
Now, Equation~\nn{algebra} implies
the operator identity
$$
I\cdot D\circ \sigma^{k+1}=\sigma^{k}\circ \big[(d-k)(k+1)+\sigma  I\cdot D\big]+\tilde{\mathcal O}(\sigma^{d+k})\, ,
$$
and in turn Equation~\nn{logalgebra} yields
$$
\big[(d-k)(k+1)+\sigma  I\cdot D\big]F=
(d-k)(k+1)F+(d-2k-1)g + \tilde{\mathcal O}(\sigma) \, .
$$
Noting that $\log(\sigma/\tau)\, A + B^\prime=\frac{I^2-1}{\sigma^k}+\tilde{\mathcal O}(\sigma)$, altogether we have
$$
I_{\sigma^\prime}^2-1= I^2-1+\frac 2d\, \sigma^k\,  \Big[ (d-k)(k+1)  F + (d-2k-1) g\Big]+\tilde{\mathcal O}(\sigma^{k+1})\, .
$$
Cancelling  the leading $\sigma^k\log(\sigma/\tau)$ terms on the right-hand-side  above requires $g=-\frac{d}{2}\frac{A}{(d-k)(k+1)}$ which gives the solution for $F$
$$
\sigma^k F=-\frac{d}{2}\frac{I^2-1}{(d-k)(k+1)} + \frac d2 \frac{(d-2k-1)A}{(d-k)^2(k+1)^2}\, \sigma^{k}\, . 
$$
Inserting the above in  $\sigma^\prime=\sigma(1+\sigma^k F)$ gives the quoted result.
\end{proof}

\begin{remark}
If the obstruction~$\B$ is absent, so that~$I^{\, 2}-1=\O(\sigma^{d+1})$, 
then the log term of the solution~\nn{log-coeff} can be omitted
and Proposition~\ref{prod} can be extended to give a {\it smooth}   order $\ell=\infty$ 
solution to Problem~\ref{I2-prob}.
In that case, there remains the freedom to modify the solution by the
term~$\sigma^{d+1} f$ in Equation~\nn{log-coeff}, for any smooth weight~$-d$ density~$f$.
\end{remark}

\subsection{Examples}\label{examples}
It is not difficult to compute explicit formul\ae\ for the obstruction
density by using the recursion defined in Proposition~\ref{prod} and
the tools of Section \ref{R-viaunit}. However with increasing
dimension (and hence order) the computations and expressions rapidly
become complicated, see~\cite{GW-willII,GGHW} for details. We give two examples. For simplicity the second is
given in conformally flat ambient spaces, in terms of a flat ambient
metric.
\begin{proposition}\label{Willmore}
For surfaces in dimension~$d=3$, the ASC obstruction density is given by
\begin{equation}\label{ASCobst2}
{\mathcal B}=-\frac{1}{3}\Big(\nablab_a\nablab_b +H\IIo_{ab}+\Rho_{ab}^\top\Big)\IIo^{ab}\, .
\end{equation}

For hypersurfaces in conformally flat four-manifolds,  
the ASC obstruction density is expressed, using a flat scale, by
$$
{\mathcal B}=\frac1{6}\Big((\nablab_c\IIo_{ab})^2+
2\IIo^{ab}\bar\Delta\IIo_{ab}+\frac32\, (\nablab^b\IIo_{ab} )\nablab^c\IIo^a{}_c
-2\bar\J\, \IIo_{ab}\IIo^{ab}  
+(\IIo_{ab}\IIo^{ab})^2
\Big)\, .
$$
\end{proposition}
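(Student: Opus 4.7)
My plan is to use the explicit recursion of Proposition~\ref{prod} to construct the defining density $\bar\sigma_{d-1}$ and then read off the obstruction $\mathcal{B} = [(I^2_{\bar\sigma_{d-1}}-1)/\sigma^d]|_\Sigma$ from Equation~\nn{obstruction}. Because $\mathcal{B}$ is independent of the initial choice of normal defining density (by Theorem~\ref{main-Th}), I can work in any convenient scale $g \in \cc$ and start from a Riemannian unit defining function $s$ (existence from Proposition~\ref{R-main}) rescaled by a scale $\tau$, giving a normal defining density $\sigma = s\tau$. In that scale $[I^A_\sigma]_g = (\sigma, \nabla_a\sigma, -\frac{1}{d}(\Delta+\J)\sigma)$, so $I^2_\sigma = (\nabla\sigma)^2 - \frac{2}{d}\sigma(\Delta+\J)\sigma$, and all higher ambient derivatives of $\sigma$ along $\Sigma$ can be expressed in terms of $\II$, $\IIo$, $H$ and ambient curvature via the conformal analog of Proposition~\ref{ind4R}.

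For $d=3$ the recursion requires two steps: at the first stage we apply $\bar\sigma_1 = \sigma[1 - \tfrac{3}{8}(I^2_\sigma-1)]$, at the second $\bar\sigma_2 = \bar\sigma_1[1 - \tfrac{1}{2}(I^2_{\bar\sigma_1}-1)]$, using the identities \nn{hD2} and \nn{algebra} to keep track of how $I^2$ changes under each $\sigma\mapsto \sigma(1+\sigma^k f_k)$ improvement. Expanding $I^2_{\bar\sigma_2}$ to order $\sigma^3$, the restriction to $\Sigma$ brings in $\nablab \II$ and $\nablab^2 \II$ as well as ambient $\Rho$-terms evaluated on $\Sigma$. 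Applying the trace Codazzi--Mainardi identity~\nn{Mainarditrace} to rewrite $\nablab^a \II_{ab}$ in terms of $\nablab H$ and $\Rho(\hat n,\cdot)^\top$, the Fialkow--Gau\ss\ identity~\nn{Gausstrace} to eliminate $\bar{\Rho}$ and $W(\hat n, \cdot, \cdot, \hat n)$, and Equation~\nn{JJbar} for $\J - \Rho(\hat n,\hat n)$, all reorganises into the surface-form combination $-\tfrac{1}{3}(\nablab_a\nablab_b + H\IIo_{ab} + \Rho_{ab}^\top)\IIo^{ab}$; the non-trivial check is that the $\bar\Delta H$ term predicted by Theorem~\ref{leading} appears with the correct coefficient after applying~\nn{boxH}.

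For hypersurfaces in conformally flat four-manifolds the same recursion now runs through three stages $\bar\sigma_0 \to \bar\sigma_1 \to \bar\sigma_2 \to \bar\sigma_3$, and I would work in a flat scale so that $\Rho$, Cotton and Weyl all vanish, which collapses most of the ambient curvature terms in \nn{Gausstrace}, \nn{Mainardi} and \nn{CODIIO}. What survives after extracting the $\sigma^4$ coefficient are expressions involving $\IIo$ and its intrinsic derivatives together with $\bar\J$ (from~\nn{JJbar}). Using the conformally invariant Codazzi operator~\nn{CodazziOperator}, which in flat ambient scale reads $\mathsf{Cod}(\IIo)_{abc} = \nablab_a\IIo_{bc}-\nablab_b\IIo_{ac}+\tfrac{1}{2}(\nablab\!\cdot\!\IIo_a\,\bar g_{bc} - \nablab\!\cdot\!\IIo_b\,\bar g_{ac})$, one regroups the quadratic-in-derivatives of $\IIo$ contributions as $(\nablab_c\IIo_{ab})^2$ plus the correction $\tfrac{3}{2}(\nablab^b\IIo_{ab})\nablab^c\IIo^a{}_c$. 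The main obstacle is purely computational bookkeeping: tracking the large number of terms generated by the successive improvements, commuting $\nablab$ through to produce the $\IIo^{ab}\bar\Delta\IIo_{ab}$ term, and verifying that the algebraic combinations of $\IIo\cdot\IIo$-type expressions collapse to the stated $-2\bar\J\,\IIo_{ab}\IIo^{ab}$ and $(\IIo_{ab}\IIo^{ab})^2$ contributions with the overall factor $1/6$. The independence of the result from the choice of $\tau$ and of the normal defining density is guaranteed by Theorem~\ref{main-Th}.
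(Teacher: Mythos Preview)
Your plan is exactly the approach the paper indicates: the paper does not give a proof of this proposition in the text, but states just before it that the formul\ae\ are obtained ``by using the recursion defined in Proposition~\ref{prod} and the tools of Section~\ref{R-viaunit}'', with the detailed computations deferred to~\cite{GW-willII,GGHW}. Your recursion coefficients for $d=3$ are correct, and the identities you propose to use (Codazzi--Mainardi, Fialkow--Gau\ss, and~\nn{JJbar}, \nn{boxH}) are precisely the ``tools'' the paper has in mind; the only content is the bookkeeping, which you correctly identify as the obstacle.
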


\section{Related linear problems and the form of the obstruction density}
\label{lin}

By construction the obstruction density~$\cB$ found in Theorem~\ref{obstr} above
 is conformally invariant. We will establish in Theorem \ref{Bnatthm} below, that it is moreover a hypersurface 
conformal invariant in the sense of Definition \ref{chi-def}. Here we establish its
structure at  leading  differential order. 
\begin{theorem}\label{leading} Up to a non-zero constant multiple,~the ASC obstruction density $\cB$ 
takes the form:
$$ 
\bar{\Delta}^{\frac{d-1}{2}} H + \mbox{\rm  lower order terms}, \quad \mbox{if } d-1 \mbox{ is even} ;
$$ 
and 
$$ 
\mbox{\rm a fully non-linear expression}, \quad \mbox{if } d-1 \mbox{ is odd} . 
$$ 
\end{theorem}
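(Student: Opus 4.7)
The idea is to combine a linearisation argument about a flat totally-umbilic embedding with a classification of conformally invariant linear operators, and then verify non-vanishing of the leading coefficient by tracking the recursion of Proposition~\ref{prod}.

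\textit{Step 1 (Linearisation).} Consider a one-parameter deformation of an embedding about a flat totally-geodesic reference configuration such as $\mathbb{E}^{d-1}\hookrightarrow \mathbb{E}^d$, for which $H\equiv 0$, $\IIo\equiv 0$, and all ambient curvatures vanish identically. Since $\cB$ is a natural conformal hypersurface invariant (Theorem~\ref{Bnatthm}), its first-order variation in this deformation is a linear natural differential operator acting on the first-order variations of $H$ and $\IIo$; no other building block can contribute at leading order.

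\textit{Step 2 (Weight-based classification of linear leading terms).} The obstruction density takes values in $\ce\Sigma[-d]$, while $H$ has weight $-1$ and $\IIo$ is a weight $+1$ trace-free symmetric $2$-tensor on $\Sigma$. A natural conformally invariant linear operator from $H$ to a weight $-d$ density on the $(d-1)$-manifold $\Sigma$ must have total derivative order $d-1$ and, up to lower-order curvature corrections, its leading symbol is forced to be a scalar multiple of $\bar\Delta^{(d-1)/2}$; this is a local differential operator exactly when $d-1$ is even. Any candidate linear natural operator on $\IIo$ with the same target weight may, via the Codazzi--Mainardi identity~\nn{Mainarditrace} and its trace-free version~\nn{CODIIO}, be re-expressed as an operator on $H$ modulo curvature corrections, so the same parity constraint applies.

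\textit{Step 3 (Non-vanishing in the even case).} When $d-1$ is even, track the highest-derivative contribution in $H$ through the recursion of Proposition~\ref{prod}. Starting from $A_1$ whose leading-derivative term is proportional to $H$ (a consequence of $\Delta\si\propto H$ along $\Sigma$ for a normal defining density), each recursion step contributes a further controlled application of the ambient Laplacian to the leading term of $A_k$. In a conformal analog of Gauss-normal gauge (equivalently, working with an adapted choice of scale tractor), the ambient Laplacian $\Delta$ decomposes into the intrinsic $\bar\Delta$ plus normal-derivative and shape-operator pieces; only the former persists in the top-derivative chain, the latter feeding subleading contributions. The non-vanishing recursion denominators $(d-i-1)(i+2)$ from Lemma~\ref{Isquare} (for $0\le i\le d-2$) guarantee a non-zero overall coefficient, yielding $\cB=c\,\bar\Delta^{(d-1)/2} H + \text{l.o.t.}$ with $c\neq 0$.

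\textit{Step 4 (Odd case).} If $d-1$ is odd, the classification of Step~2 rules out any non-trivial linear-in-$H$ or linear-in-$\IIo$ contribution at top derivative order. The linearisation of $\cB$ therefore vanishes to leading order in $H$ and $\IIo$, so $\cB$ must be a fully non-linear expression in the jets of the extrinsic and ambient curvature data. The principal obstacle in this programme is Step~3: each recursion step produces a mixture of tangential and transverse derivatives together with lower-order curvature corrections, and a careful choice of conformal scale or adapted scale tractor is required to cleanly separate the genuine top-order tangential Laplacian contributions from these subleading terms.
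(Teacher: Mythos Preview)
Your overall strategy---linearise about the flat hyperplane in $\mathbb{E}^d$ and invoke the classification of conformally invariant operators---is the same as the paper's, and your Step~4 for the odd case is essentially correct. The substantive difference lies in how the even case is handled.

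The paper does \emph{not} track the leading term through the recursion of Proposition~\ref{prod}. Instead it linearises the defining relation $I^2_{\bar\sigma_t}=1+\bar\sigma_t^{\,d}B_{\bar\sigma_t}$ itself: applying $\delta=\tfrac{d}{dt}\big|_{t=0}$ yields
\[
2\, I\cdot \hD\,\dot\sigma \;=\; \sigma^{d}\,\dot B_\sigma,
\]
so the linearised obstruction $\dot B_\sigma|_\Sigma$ is precisely the obstruction to the \emph{linear} extension problem $I\cdot \hD\,\dot\sigma=0$ for weight~$1$ densities. That problem is solved in \cite{GW}: it is unobstructed when $d-1$ is odd and obstructed by the GJMS operator $\Pop_{d+1}\dot\sigma$ (with a known non-zero constant) when $d-1$ is even. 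Since the variation of the mean curvature has leading term $\tfrac{1}{d-1}\bar\Delta\dot\sigma$, the identification $\Pop_{d+1}\dot\sigma\sim\bar\Delta^{(d+1)/2}\dot\sigma\sim\bar\Delta^{(d-1)/2}\dot H$ on the flat model gives both the form and the non-vanishing of the leading coefficient at once.

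Your Step~3 is where the gap lies. You propose to follow the top-derivative contribution through the iterates $\bar\sigma_k$, but this is exactly the computation the paper sidesteps: at each stage the improvement involves $I^2_{\bar\sigma_{k-1}}-1$, which depends on $\Delta\bar\sigma_{k-1}$ and hence mixes transverse and tangential derivatives in a way that is not cleanly separated by any scale choice you describe. The non-zero denominators $(d-i-1)(i+2)$ guarantee the recursion runs, but they do not by themselves control the top-order coefficient of $\bar\Delta^{(d-1)/2}H$; cancellations among tangential pieces are not ruled out by your argument. The paper's reduction to the linear extension problem of \cite{GW} is what makes the non-vanishing transparent.

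A smaller point on Step~2: the linearisation of $\cB$ is conformally invariant as an operator on the embedding variation $\dot\sigma\in\Gamma(\ce\Sigma[1])$, not on $\dot H$, since $H$ transforms inhomogeneously under conformal rescaling. Your weight count should therefore be run for operators $\ce\Sigma[1]\to\ce\Sigma[-d]$ of order $d+1$; the conclusion is the same, but this is the correct framing and is how the BGG classification in Section~\ref{BGG} is deployed.
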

\noindent When $d-1$ is odd,   this means that there is an
expression for~$\cB$ as a linear combination of terms, none of which is
linear in the jets of the ambient curvature~$R$ and the conormal~$n$.

In Lemma~\ref{rho-engine} 
 and Lemma~\ref{main-calc} below, we give a general algorithm 
for computing a formula for the obstruction density.
To calculate its  leading term 
we linearise  by computing its
infinitesimal variation. It is easily seen, using the algorithm there,
that every term in the expression will involve jets of the conormal
$n$.  Thus  it here suffices to consider an
$\mathbb{R}$-parametrised family of embeddings of~$\mathbb{R}^{d-1}$
in~$\mathbb{E}^{d}$, with corresponding defining densities~$\si_t$
such that the zero locus~$\Z(\sigma_0)$ is the~$x^{d}=0$ hyperplane
(where~$x^a$ are the standard coordinates on
$\mathbb{E}^{d}=\mathbb{R}^{d}$) and so that~$\cB|_{t=0}=0$. Then applying
$\delta:=\frac{d}{d t}\big(\, \cdot\, \big)\!\mid _{t=0}$ (often denoted by a dot) we obtain
the following:

\begin{proposition}\label{Bnature}
The variation of the obstruction density is given by
$$
\dot \cB = \left\{
\begin{array}{ll}
 a\, \bar{\Delta}^{\frac{d+1}2} \dot{\si} + \mbox{\em lower order terms}\, ,&d-1 \mbox{ even, with~$a\neq 0$ a constant, }\\[2mm]
\mbox{\em non-linear terms}\, ,& d-1 \mbox{ odd.}
\end{array}\right.
$$
\end{proposition}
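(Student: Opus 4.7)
The plan is to linearise the defining equation $S(\bar\si)=1+\bar\si^{d}B$ at a flat totally-geodesic model, where a closed-form computation is feasible. Put $(M,\bg)=\mathbb{E}^{d}$ with flat scale $\tau$ and Euclidean coordinates $x^{1},\ldots,x^{d}$, set $s:=x^{d}$ and $\si_{0}:=s\tau$, so that $\Z(\si_{0})=\Sigma_{0}$. In this trivialisation $(\nabla\si_{0})^{2}=1$ and $\Delta\si_{0}=0$, hence $S(\si_{0})=1$ exactly and $\cB|_{t=0}=0$, matching the hypothesis. Writing $S(\si)=(\nabla\si)^{2}-\tfrac{2}{d}\si(\Delta+\tfrac{\Sc}{2(d-1)})\si$ as in Equation~\eqref{Ytwo}, the linearisation at $\si_{0}$ reduces in the flat scale to
\[
\dot S(\dot\si)=2\,\partial_{s}\dot\si-\tfrac{2}{d}\,s\,\Delta\dot\si\, .
\]

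Expand $\dot\si=\sum_{k\geq 0}s^{k}\dot\si_{(k)}$, with each $\dot\si_{(k)}$ depending only on $(x^{1},\ldots,x^{d-1})$, and collect powers of $s$. The coefficient of $s^{0}$ is $2\dot\si_{(1)}$, while for $j\geq 1$ the coefficient of $s^{j}$ equals $\tfrac{2(j+1)(d-j)}{d}\dot\si_{(j+1)}-\tfrac{2}{d}\bar\Delta\dot\si_{(j-1)}$. Imposing $\dot S=\O(s^{d})$ yields $\dot\si_{(1)}=0$ together with
\[
\dot\si_{(j+1)}=\frac{\bar\Delta\dot\si_{(j-1)}}{(j+1)(d-j)}\, ,\qquad 1\leq j\leq d-1\, .
\]
By induction every odd-index coefficient vanishes, whereas $\dot\si_{(2m)}=c_{m}\bar\Delta^{m}\dot\si_{(0)}$ for a non-zero rational $c_{m}$ whenever $2m\leq d-1$. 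At $j=d$ the coefficient of $\dot\si_{(d+1)}$ collapses, so differentiating $S(\si)=1+\si^{d}B$ at $t=0$ yields
\[
\dot\cB=-\tfrac{2}{d}\,\bar\Delta\,\dot\si_{(d-1)}\big|_{\Sigma}\, .
\]

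If $d-1=2m$ is even, then $\dot\si_{(d-1)}=c_{m}\bar\Delta^{m}\dot\si_{(0)}$ is nonzero, and the formula above delivers $\dot\cB=a\,\bar\Delta^{(d+1)/2}\dot\si|_{\Sigma}$ with $a\neq 0$; the \emph{lower order terms} of the statement are the curvature- and $\IIo$-coupled tail obtained by repeating the recursion in the general non-flat case, {\it cf.}~Proposition~\ref{prod}. If $d-1$ is odd, then $\dot\si_{(d-1)}$ is an odd-index coefficient and hence vanishes, so the linearisation of $\cB$ at the flat totally-geodesic model is identically zero. Because $\cB$ is a polynomial natural invariant in the jets of the ambient Riemann tensor and the conormal $n$, this vanishing forces every monomial of $\cB$ to involve at least two factors drawn from those jets, yielding the \emph{non-linear terms} conclusion.

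The main obstacle is not the flat calculation but the verification that in the even-parity case the lower order terms genuinely have differential order strictly less than $d+1$ in $\dot\si_{(0)}$. This requires an inductive bookkeeping, via Lemma~\ref{main-calc}, to confirm that switching on background curvature in the recursion of Proposition~\ref{prod} introduces only corrections of strictly lower differential weight, so that no $(d+1)$-th order derivative of $\dot\si_{(0)}$ leaks into the subleading pieces of $\dot\cB$.
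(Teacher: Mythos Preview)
Your proof is correct and takes a genuinely different route from the paper's. The paper linearises the tractor form $I^2_{\bar\si_t}=1+\bar\si_t^d B_{\bar\si_t}$ to obtain $2I\cdot\hD\,\dot\si=\si^d\dot B_\si$, and then recognises this as precisely the linear extension problem of~\cite{GW}, so that the obstruction to solving it is supplied by~\cite[Proposition~5.4]{GW}: it is the GJMS operator $\Pop_{d+1}$ when $d-1$ is even and vanishes when $d-1$ is odd. You instead work directly with $S(\si)$ in flat coordinates, Taylor-expand $\dot\si=\sum_k s^k\dot\si_{(k)}$, and solve the two-step recursion $\dot\si_{(j+1)}=\bar\Delta\dot\si_{(j-1)}/\big((j+1)(d-j)\big)$ by hand. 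Your approach is more elementary and fully self-contained, avoiding both the tractor machinery and the external citation; the paper's approach is shorter once that machinery is in place and makes the link to the conformal Laplacian $\Pop_{d+1}$ explicit rather than merely producing its flat symbol.

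One minor comment: your final paragraph about verifying via Lemma~\ref{main-calc} that curvature corrections in the curved case have strictly lower differential order is not actually required for the proposition as stated, which concerns only the flat-model linearisation. That bookkeeping belongs instead to the passage from Proposition~\ref{Bnature} back to Theorem~\ref{leading}, and the paper handles it by simply observing that $\cB$ is polynomial in the jets of $\si$, so the linearisation at the flat model already identifies the top-order term.
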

This establishes  Theorem~\ref{leading} because---remembering that ${\cB}$ depends polynomially on~$\sigma$ and its derivatives---the
highest order term in the variation of mean curvature is~$\frac{1}{d-1}\bar{\Delta} \dot \si$. It also shows that when~$d-1$ is
odd the formula for~$\cB$, determined by Theorem~\ref{Bnatthm}  
below, has no
linear term.

\begin{proof}[Proof of Proposition~\ref{Bnature}]
First,  via Theorem
\ref{obstr}, for each~$t\in \mathbb{R}$ we can replace~$\si_t$ with the
corresponding normalised defining density~$\bar{\si}_t$ which solves
\begin{equation}\label{tsol}
I^2_{\bar{\si}_t}=1+{{\bar{\si}}_t}^{d}\, B_{\bar{\si}_t}\, .
\end{equation}
 We can
assume the family~$\bar{\si}_t$ depends smoothly on~$t$ since, according to Proposition~\ref{prod}, we may take
$\bar{\si}_t$ to depend polynomially on~$\si_t$.

Next observe that~$B_{\bar{\si}_0}|_{\Z(\bar\si_{t=0})}=0$. This
follows from Theorem~\ref{obstr} 
since, in this Euclidean hyperplane setting, there is a parallel standard tractor~$I$
such that~$I^2=1$, and with~$\si:=X^AI_A$ also a defining
density for~$\Z(\bar\si_{t=0})$.
(This follows, using stereographic projection, from 
the results in~\cite[Section~5]{Goal}.) In fact given a hypersurface defined by some defining
density~$\si$, the freedom to change~$\si$ is just that of multiplying
by a nowhere vanishing function. Thus there is no loss of generality
in assuming that our initial parametrised family~$\si_t$ obeys
$\si_{t=0}=\si$, and we shall henceforth take this to be the
case. Then, according to Proposition~\ref{prod},~$\bar{\si}_{t=0}=\si$.

Now we consider the variation at~$t=0$, through the family of
embeddings~$\bar{\si}_t$. Viewing the space~$\mathbb{E}^{d}$ as the~$t=0$
hypersurface in~$\mathbb{E}^{d}\times \mathbb{R}$ and  applying
$\delta$
to Expression~\ref{tsol} we have
\begin{equation}\label{tder}
 2 I\cdot\hD \, \dot{\si}= {\si}^{d}\, \dot B_{\si}\, ,
\end{equation}
where now~$I:=I_\si$ is parallel,~$\dot{\si}=\delta \bar{\si}$, and
$\dot{B}_{{\si}}=\delta B_{\bar{\si}_t}$. Given that (for each
$t$) the solution~$\bar{\si}_t$ is polynomial in the jets of the
original~$\si_t$, it is clear that the linearisation of~\nn{tsol}
provides a solution of the linear problem~$ I\cdot\hD \dot{\si} =0$, to
the given order, and the linearisation~$\dot{B}_{\si}$ of~$B$ is an
obstruction to the linear problem.  But the linear problem~$ I\cdot\hD
\dot{\si} =0$, for extending conformal weight 1-densities off a
hypersurface, is treated in~\cite{GW}. From
\cite[Proposition 5.4]{GW} this implies that when~$d-1$ is odd
\begin{equation}\label{B=Pndotsigma}
\dot{B}_{\si}\!\mid_{\Z(\si)} \, = 0, 
\end{equation}
as the linear problem is unobstructed in this case, while when~$d-1$ is even
$$ \dot{B}_{\si}\!\mid_{\Z(\si)}\,  = a\, \Pop_{d+1} \dot\si , \quad
\mbox{with~$a\neq 0$ a constant}\, ,
$$ 
where~${P}_{d+1}$ is the order~$d+1$, 
conformally invariant, Laplacian power operator~\cite{JV,EastwoodRice,GJMS} along~$\Z(\sib_0)$. In the flat Euclidean metric on~$\Z(\si)$, we have simply~$\Pop_{d+1}= \bar\Delta^{\frac{d+1}{2}}$. 
So we are done.
\end{proof}

\begin{remark}
Note that conformal Laplacian operator~$\Pop_{d+1}$  is beyond the
order for which such operators exist in general curved backgrounds
\cite{GJMS,Grnon,GoHirachi}. But above the proof is in the
conformally flat setting where the existence of $\Pop_{d+1}$ is
well known. In fact, for an even dimensional hypersurface (with
nowhere null conormal) in any conformal manifold the tangential
operators~$(I\cdot D )^\ell$ (on suitably weighted densities or
tractor fields) give conformal Laplacian type operators on~$\Sigma$,
for all even orders~$\ell$. This does not  contradict
the non-existence of higher order conformal Laplacian operators,  as these
use the data of the conformal embedding.
\end{remark}

 \subsection{Bernstein--Gel'fand--Gel'fand complexes}
\label{BGG}

\newcommand{\tstar}{\mbox{\Large $ \star$}}
\newcommand{\mstar}{\mbox{\large $ \star$}}

It is useful to see how the linearised operator of Proposition
\ref{Bnature} fits into the standard theory of conformally invariant
differential operators on conformally flat manifolds. 

We work on a manifold of dimension $n$ (which one should view as $d-1=\dim(\Sigma)$
for comparison with the above).  For this section, it will be
convenient to introduce an alternative notation for the
bundles and corresponding smooth section spaces  of certain tensor
bundles. We will use $\ce^k$ as a convenient alternative notation for
$\Omega^kM=\Gamma(\wedge^k T^*M)$. The tensor product of $\ce^k\otimes \ce^\ell$, $
\ell\leq n/2$, $k\leq \lceil n/2 \rceil$, decomposes (as bundles) into
irreducibles. We denote the highest weight component by
$\ce^{k,\ell}$.  (Here ``weight'' does not refer to conformal weight,
but rather the weight of the inducing O$(n)$-representation.) We
realise the tensors in $\ce^{k,\ell}$ as trace-free covariant
$(k+\ell)$-tensors $T_{a_1\cdots a_kb_1\cdots b_\ell}$ which are skew
on the indices $a_1\cdots a_k$ and also on the set $b_1\cdots
b_\ell$. Skewing over more than~$k$ indices annihilates~$T$, as does
symmetrising over any 3 indices.  Then as usual, we write~$\ce^{k,\ell}[w]$ as a shorthand for the tensor product
$\ce^{k,\ell}\otimes \ce[w]$. For example, $\ce^{2,2}[2]$ is the bundle
of {\em algebraic Weyl tensors} while the trace-free second
fundamental form $\IIo$ is a section of~$\ce^{1,1}[1]$.  Then we use the notation
$\ce_{k,\ell}[w]$ to stand for $\ce^{k,\ell}[w+2k+2\ell-n]$. This
notation is suggested by the global duality between $\ce^{k,\ell}[w]$ and
$\ce_{k,\ell}[-w]$ given by contraction and integration over the
manifold (in the case of compact manifolds, or else by using compactly
supported sections).

The round sphere is conformally flat,  and the sphere  equipped with the
conformal class of the round metric provides the basic ``flat model''
for conformal geometry in each dimension. This is acted upon by the
conformal group $SO(n+1,1)$;  the linear differential
operators (between irreducible bundles) that intertwine this action are
classified via corresponding dual Verma module homorphisms
\cite{BC,EastSlov}.  Most of the existing operators arise as  differentials in complexes known as BGG complexes \cite{Lepowsky}. The BGG
complex of interest to us   takes (for dimensions $n\geq 8$) the form
$$
 \begin{picture}(350,50)(35,-32)
 \put(10,0)
 {$
\ce[1]\xrightarrow{\sf aE}
\ce^{1,1}[1] \xrightarrow{\sf Cod}\ce^{2,1}[1] \to 
\cdots \to \ce_{2,1}[-n-1] \xrightarrow{{\sf Cod}^*} \ce_{1,1}[-n-1] 
\xrightarrow{{\sf aE}^*}\ce[-n-1] ~.
 $}                                                         
 \end{picture} 
$$
\vspace{-14mm}

\noindent
 The
details of most of the operators will not be important for us.  For
the named operators: $\operatorname{\sf Cod}$ is the conformal Codazzi
operator and is given in a scale in Equation~\nn{CodazziOperator} while $\operatorname{\sf
  aE}:\ce[1]\to \ce^{1,1}[1]$ is the so-called almost Einstein
operator, which (in a scale) is given by $\si \mapsto
\nabla_{(a}\nabla_{b)\circ}\, \si +P_{(ab)\circ}\, \si$, while $\operatorname{\sf
  Cod}^*$ and $\operatorname{\sf aE}^*$ are formal adjoints of these.

In odd dimensions the differentials of the BGG complex exhaust the list of all
conformally invariant differential operators between the bundles
concerned. However in even dimensions there are also {\em long
  operators} ${\sf L}:\ce^{k,\ell}[1]\to \ce_{k,\ell}[-1]$, and an additional
pair of operators about the centre of the diagram~\cite{BC}.  
Altogether this gives the operator diagram
$$
\begin{picture}(350,50)(29,-32)
\put(10,0)
{$
\ce[1]\xrightarrow{\sf aE}
\ce^{1,1}[1]\xrightarrow{\sf Cod}\ce^{2,1}[1] \to 
\cdots \to \ce_{2,1}[-n-1] \xrightarrow{{\sf Cod}^*}\ce_{1,1}[-n-1] 
\xrightarrow{{\sf aE}^*} \ce[-n-1] 
$}                                                         
\put(18,-31){\line(1,0){354}}
\put(18,-31){\line(0,1){24}}                               
\put(372,-31){\vector(0,1){24}}
\put(32,-27){\scriptsize${\sf P}_{n+2}$}

\put(65,-23){\line(1,0){230}}
\put(65,-23){\line(0,1){16}}                               
\put(295,-23){\vector(0,1){16}}
\put(73,-19){\scriptsize$\sf L$}

\put(123,-14){\line(1,0){86}}
\put(123,-14){\line(0,1){7}}                               
\put(209,-14){\vector(0,1){7}}
\put(1453,-10){\scriptsize${}$}

\end{picture}
$$
for dimensions 8 or greater.  The operators in this diagram are unique
(up to multiplying by a constant), and the diagram indicates by arrows
all the operators between the bundles explicitly presented.  In particular, all compositions  shown vanish.  The same diagram applies in
dimensions 6 and 4 with minor adjustments: In dimension 6 there are
two ``short'' operators with domain $\ce^{2,1}[1]$ and two with range
$\ce_{2,1}[-1]$. {}Built from these there is one non-trivial composition $
\ce^{2,1}[1]\to \ce_{2,1}[-1]$.  Similiarly in dimension 4 we have
$\tstar \circ {\sf Cod}: \ce^{1,1}[1] \to\ce^{2,1}[1]$ and ${\sf
  Cod}^* \circ \tstar : \ce^{2,1}[1]\to \ce_{1,1}[-1] $, as well as
the operators indicated. Here $\star$ is a bundle involution related
to the Hodge star operator on middle degree forms.  Then ${\sf L}$ is
the composition ${\sf Cod}^*\circ {\sf Cod}$.  In dimension 2 the
corresponding diagram is
$$
\begin{picture}(350,50)(-78,-28)
\put(5,0){$
\ce[1]  
\begin{picture}(20,20)(0,0)
\put(0,3){$\xrightarrow{\hspace{1mm} \sf aE \hspace{1.1mm} }$}
\put(0,-3){$\xrightarrow[\mstar {\sf aE}]{}$}
\end{picture}\, \  
\ce^{1,1}[1] \!
\begin{picture}(20,20)(-5,0)
\put(0,3){$\xrightarrow{{\sf aE}^*\mstar}$}
\put(0,-3){$\xrightarrow[\hspace{1mm}{\sf aE}^*\hspace{1mm}]{}$}
\end{picture}
\ \ \ \, \ce[-3] 
$}
\put(18,-23){\line(1,0){106}}
\put(18,-23){\line(0,1){16}}                               
\put(124,-23){\vector(0,1){16}}
\put(23,-19){\scriptsize$\sf \Pop_4$}
\end{picture}
$$
and in this case
\begin{equation}\label{P4fact}
\Pop_4:= {\sf aE}^*\circ {\sf aE}.
\end{equation}
We see here that in dimension 4 the operator $\Pop_4$ factors through
$\operatorname{\sf aE}$. This aspect generalises. 
\begin{lemma}\label{factor}
On any conformally flat manifold of even dimension $n$, the operator
$\Pop_{n+2}:\ce[1]\to \ce[-n-1]$ can be written as a composition
$$
\Pop_{n+2}= \operatorname{\sf H}\circ \operatorname{\sf aE},
$$ where $\operatorname{\sf H}:\ce^{1,1}[1]\to \ce[-n-1]$ is a
differential operator.
\end{lemma}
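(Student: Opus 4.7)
The plan is to first show that every almost Einstein scale (an element $\sigma \in \Gamma(\ce[1])$ with $\operatorname{\sf aE}\sigma = 0$) lies in the kernel of $\Pop_{n+2}$ on any conformally flat manifold, and then to use the classification of conformally invariant operators to promote this kernel containment into an actual factorization.

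For the first step, I would use the standard tractor-calculus identification $\operatorname{\sf aE}\sigma = 0 \iff I^A_\sigma := \hD^A\sigma$ is parallel with respect to the tractor connection~$\nabla^\cT$. This is immediate by reading off the middle slot of $\nabla^\cT_a I^A_\sigma$ via the explicit connection formula~\eqref{trconn}. On a conformally flat manifold the tractor curvature vanishes, so parallel scale tractors locally form an $(n+2)$-dimensional family. Working in a local flat scale, the vanishing of all three slots of $\nabla^\cT_a I^A_\sigma = 0$ forces $\nabla_a\nabla_b\sigma = \tfrac{1}{n}g_{ab}\Delta\sigma$ and $\nabla_a\Delta\sigma = 0$, so $\Delta\sigma$ is locally constant. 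Since $\Pop_{n+2}\stackrel g= \Delta^{(n+2)/2}$ in a flat scale and $(n+2)/2 \geq 2$, this immediately gives $\Pop_{n+2}\sigma = 0$ for every such $\sigma$, and by conformal invariance the same holds in any scale.

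For the second step, a short flat-space calculation (commuting ordinary derivatives in a flat scale) shows
$$
\nabla^a\nabla^b \operatorname{\sf aE}(\sigma)_{ab} \;=\; \tfrac{n-1}{n}\,\Delta^2\sigma,
$$
so $\Pop_{n+2}\sigma = \tfrac{n}{n-1}\,\Delta^{(n-2)/2}\nabla^a\nabla^b\operatorname{\sf aE}(\sigma)_{ab}$ on flat space. This prescribes $\operatorname{\sf H}$ at the level of principal symbol. To promote this to an invariant operator on any conformally flat background, I would invoke the classification of conformally invariant differential operators between irreducible bundles summarised in the displayed BGG-type diagram: such operators on the flat model are classified by homomorphisms of generalised Verma modules for $\frak{so}(n+1,1)$, and the long operator $\Pop_{n+2}$ sits at one end of a chain of compositions among the operators in the diagram. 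In particular, there is a (unique up to scale) invariant operator $\operatorname{\sf H}:\ce^{1,1}[1]\to\ce[-n-1]$ realisable as an iterated composition of the arrows $\operatorname{\sf L}$, $\operatorname{\sf Cod}^*,\ldots,\operatorname{\sf aE}^*$ visible in the diagram, whose leading symbol is forced to match the flat-space expression above.

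Finally I would compare $\Pop_{n+2}$ and $\operatorname{\sf H}\circ\operatorname{\sf aE}$: both are conformally invariant differential operators $\ce[1]\to\ce[-n-1]$, and on the conformally flat sphere (and hence on any conformally flat manifold) the space of such invariant operators is one-dimensional, so the two agree up to a constant multiple. Matching leading symbols via the displayed flat-space identity pins down the constant, and absorbing it into $\operatorname{\sf H}$ completes the factorization. The main obstacle is the second step: producing an invariant curved lift of the flat-space formula for $\operatorname{\sf H}$ and justifying the one-dimensionality of the relevant Hom-space. Both will be handled by appealing to the classification of invariant operators (equivalently, Verma module homomorphisms) rather than constructed by hand, with the tractor realisation providing the concrete model.
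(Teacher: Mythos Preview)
Your argument contains a genuine error in Step~2. You claim there exists a nonzero conformally invariant operator $\operatorname{\sf H}:\ce^{1,1}[1]\to\ce[-n-1]$, obtained as a composition of BGG arrows such as $\operatorname{\sf aE}^*\circ\operatorname{\sf L}$. But the very classification you invoke rules this out for $n\geq 4$: the BGG diagram displays \emph{all} invariant operators between the listed bundles, and there is no arrow from $\ce^{1,1}[1]$ to $\ce[-n-1]$. Consequently every invariant composition you can form between these bundles is zero, and so your candidate $\operatorname{\sf H}\circ\operatorname{\sf aE}$ is the zero operator, not $\Pop_{n+2}$. The paper says this explicitly just after the lemma: $\operatorname{\sf H}$ is \emph{not} conformally invariant except in dimension~$2$; it is only invariant when restricted to the range of $\operatorname{\sf aE}$.

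Your flat-scale identity $\Delta^{(n+2)/2}=\tfrac{n}{n-1}\,\Delta^{(n-2)/2}\nabla^a\nabla^b\circ\operatorname{\sf aE}_{ab}$ is correct and, read in a fixed scale, already furnishes a (non-invariant) differential operator $\operatorname{\sf H}$ doing the job; that is all the lemma requires. The paper's proof takes a different route, avoiding any attempt to make $\operatorname{\sf H}$ invariant: it uses the tractor identity $X_A\Pop_{n+2}=-\Pop_n\circ D_A$, factors $\Pop_n=\mathcal{G}\circ\nabla$ through the tractor connection (since $\Pop_n$ acts on weight-$0$ tractors), and then observes that $\nabla\circ D$ factors through $\operatorname{\sf aE}$ on the right. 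Either approach yields a scale-dependent $\operatorname{\sf H}$; your mistake was not in the flat computation but in trying to promote it to an invariant operator that the representation theory forbids.
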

The operator $\operatorname{\sf H}:
\ce^{1,1}[1]\to \ce[-n-1]$ is not conformally invariant (except in dimension~2),   but must be conformal on the range of
$\operatorname{\sf aE}$.
 In fact, analogous factorisations as here hold for all
the long operators (and in fact stronger results are available); a
general approach to establishing this is discussed in~\cite{GP-obstrn}. Here, there is a simple proof:
\begin{proof}[Proof of Lemma \ref{factor}] Any conformally flat manifold is locally conformally isomorphic to the sphere and such local maps give an  injection between conformally invariant operators  and differential operators on the sphere that intertwine the conformal group action. 

In dimension 2 it is straightforward to verify~\nn{P4fact}. For higher
 dimensions, from the construction of the conformal Laplacians on
 conformally flat manifolds given in~\cite[Section 4.2]{GoSrni99} (see also
 \cite{GoPetCMP}) one has
$$ 
X_A \Pop_{n+2} =-\Pop_n\circ D_A ,
$$
where $\Pop_n$ is the order $n$ conformal Laplacian. On the other hand from the same sources, or as also discussed below in Section \ref{EGJMS}, $\Pop_n$, as an operator on tractor fields (of weight~0), factors through the tractor connection:
$$
\Pop_n={\mathcal G}\circ \nabla,
$$
for some differential operator ${\mathcal G}$.
So 
$$
X_A \Pop_{n+2} =-\, {\mathcal G}\circ\nabla\circ D_A .
$$
But a straightforward calculation verifies that 
$\nabla\circ D_A$ factors through the operator $\operatorname{\sf aE}$ (with this on the right). 
 In fact this underlies the {\em construction} of the tractor connection in \cite{BEG}.  
\end{proof}

\newcommand{\cK}{\mathcal{K}}
Let us write $\cK^{1,1}[1]$ for the space of smooth sections in the kernel of 
$$
\operatorname{{\sf Cod}}:\ce^{1,1}[1]\to \ce^{2,1}[1]
$$ in even dimensions $n\geq 4$, and for the kernel of ${\sf
  aE}^*\circ \tstar$ in dimension 2. Then we have the following:

\begin{proposition}\label{flatcase}
On any even dimensional conformally flat Riemannian $n$-manifold there
is a differential operator ${\sf H}:\ce^{1,1}[1]\to \ce[-n-1]$ that
upon restriction to $\cK^{1,1}[1]\subset \ce^{1,1}[1]$ is
conformally invariant.
\end{proposition}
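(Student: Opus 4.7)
The plan is to deduce this from Lemma \ref{factor} by combining the factorisation $\Pop_{n+2}= {\sf H}\circ {\sf aE}$ with the known local exactness of the (flat) BGG complex. The operator ${\sf H}$ produced by Lemma \ref{factor} is fixed in advance; what must be argued is that although ${\sf H}$ depends on the choice of metric in the conformal class, this dependence drops out once we feed it a section that is killed by ${\sf Cod}$.

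First, I would record the two inputs that make the argument work. Both $\Pop_{n+2}:\ce[1]\to \ce[-n-1]$ and the almost Einstein operator ${\sf aE}:\ce[1]\to \ce^{1,1}[1]$ are conformally invariant, so given two metrics $g,\hat g=\Omega^2 g$ in the conformal class and the corresponding operators ${\sf H}^g$ and ${\sf H}^{\hat g}$ supplied by Lemma \ref{factor}, the identity $\Pop_{n+2}={\sf H}\circ {\sf aE}$ forces
\begin{equation*}
\bigl({\sf H}^{\hat g}-{\sf H}^g\bigr)\circ {\sf aE}=0,
\end{equation*}
with appropriate density identifications between the two scales. Thus the ``conformal variation'' $\Delta_{\hat g,g}{\sf H}:={\sf H}^{\hat g}-{\sf H}^g$ is a differential operator on $\ce^{1,1}[1]$ that vanishes on the image of ${\sf aE}$, and in particular on every $T\in \cK^{1,1}[1]$ that can be written locally as $T={\sf aE}(\sigma)$.

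Next I would invoke the local exactness of the BGG complex on conformally flat manifolds. Since $(M,\cc)$ is conformally flat it is locally conformally isomorphic to the sphere, and the BGG sequence
\begin{equation*}
\ce[1]\xrightarrow{\sf aE}\ce^{1,1}[1]\xrightarrow{\sf Cod}\ce^{2,1}[1]
\end{equation*}
(respectively its dim-$4$ and dim-$6$ modifications indicated in the paper) is locally exact at $\ce^{1,1}[1]$; concretely, at every point $x$ and every jet order $k$, each $k$-jet at $x$ of a section of $\cK^{1,1}[1]=\ker{\sf Cod}$ is realised as the $k$-jet of some ${\sf aE}(\sigma)$. Because $\Delta_{\hat g,g}{\sf H}$ is a differential operator, and thus factors through the appropriate jet bundle, its vanishing on the image of ${\sf aE}$ upgrades to vanishing on all of $\cK^{1,1}[1]$. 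That is to say, ${\sf H}^{\hat g}|_{\cK^{1,1}[1]}={\sf H}^{g}|_{\cK^{1,1}[1]}$, which is conformal invariance of the restriction. The dimension-$2$ case is handled separately but trivially: by \nn{P4fact} the relevant ${\sf H}$ is just ${\sf aE}^*$, which is already conformally invariant on the whole of $\ce^{1,1}[1]$.

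The main obstacle is the local exactness input at $\ce^{1,1}[1]$. On the flat model this is standard BGG theory (and equivalently follows from the fact that, via $\widehat D_A\circ\widehat D_B$ and the identification $\nabla\circ D_A={\mathcal E}\circ{\sf aE}$ used inside Lemma \ref{factor}, the only local ``obstruction'' to prolonging a section of $\ker{\sf Cod}$ back through ${\sf aE}$ is the finite-dimensional kernel of ${\sf aE}$ itself). Since $\Pop_{n+2}$ annihilates this kernel—its leading term is $\bar\Delta^{(n+2)/2}$ and almost-Einstein scales on the flat model are at most linear in the usual coordinates, so all sufficiently high derivatives vanish on them—the upgrade to $\cK^{1,1}[1]$ in Step 3 is unambiguous, and the argument closes. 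Everything is local, so working on a contractible coordinate patch suffices, which is harmless for the stated proposition.
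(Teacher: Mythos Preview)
Your argument is correct and follows essentially the same route as the paper: combine the factorisation $\Pop_{n+2}={\sf H}\circ{\sf aE}$ from Lemma~\ref{factor} with local exactness of the BGG complex, so that locally $\cK^{1,1}[1]=\operatorname{im}{\sf aE}$ and the conformal variation of ${\sf H}$ vanishes there. Your final paragraph is unnecessary (the ``upgrade'' is automatic since ${\sf H}$ acts on $T$, not on a choice of preimage~$\sigma$) and contains a small slip: almost-Einstein scales on the flat model are quadratic, not linear, in standard coordinates---but this side remark plays no role in the proof.
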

\begin{proof}
Since the underlying BGG complex is locally exact, in each case, it
follows that locally $\cK^{1,1}[1]$ is the image of
$\operatorname{aE}$. Thus the result follows  from the Lemma.
\end{proof}

Now recall that on any conformal hypersurface $\IIo$ is a section of
$\ce^{1,1}[1]$. For a hypersurface in Euclidean $(n+1)$-space this is
in the kernel of the operator $\operatorname{\sf Cod}$ (see Equation~\nn{CODIIO}).  On the other
hand on a conformally flat $n$-manifold with $n$ even, given any
section $\mathring{K}$ of $\cK^{1,1}[1]$ we may form
$$
{\sf B}_{\mathring{K}}:= \operatorname{\sf H}(\mathring{K}). 
$$ 
From Proposition~\ref{flatcase} it follows that this is a
conformally invariant section of $\ce[-n-1]$. This is in
general nontrivial; the last claim following from the fact that
$\Pop_{n+2}$ is elliptic (and so has finite dimensional kernel). Thus we see that in this sense the existence of
the obstruction density in even dimensions, and the absence of its
linearisation in odd dimensions ({\it i.e.}, as in Proposition
\ref{Bnature}) is nicely compatible with linear conformal differential
operator theory. Note that for comparison with Proposition
\ref{Bnature}, at leading order $\operatorname{\sf aE} (\dot{\si})$ is the
linearisation of $\IIo$. So, for
embedding variations of the standard sphere in Euclidean space, the  linearisation of the obstruction density~$\dot {B}_\sigma\big|_{S^n}$ is a non-zero multiple of ${\sf B}_{{\sf aE}(\dot\sigma)}$, in concordance with~\nn{B=Pndotsigma}.

Let us say that a conformally invariant curvature invariant of a
hypersurface is a {\em (hypersurface) fundamental curvature quantity}
if it has a non-trivial linearisation, with respect to variation of
the hypersurface embedding, when evaluated on the conformal class of the round sphere
embedded in Euclidean space.   
We have established
above that the obstruction density is a fundamental curvature quantity
in this sense. The trace free second fundamental form is also; its
linearisation being the BGG operator $\operatorname{\sf aE}: \ce[1]\to
\ce^{1,1}[1]$. In fact these are effectively the only such invariants.
In the following statement we ignore the possibility of multiplying an
invariant by a non-zero constant.

\medskip

We are ready to prove Theorem \ref{uniqui} mentioned in the introduction.
\begin{proof}[Proof of Theorem \ref{uniqui}] \label{ui}
Defining hypersurface invariants as we do, it is easily verified that
any linearisation of a hypersurface invariant is a conformally invariant
differential operator between irreducible bundles, with domain bundle
$\ce[1]$. Thus the result is immediate from the known classification
of such operators mentioned above.  
\end{proof}
\noindent For hypersurfaces of dimension~2 there is $\cB$, $\IIo$,
and also the invariant $\star\IIo$, and these give the
full set fundamental curvature invariants. 

\begin{remark}\label{properties}
It follows from the classification of conformally invariant operators on the sphere and the above discussion
that, except for hypersurface dimension~2, the obstruction density
cannot be written, even at leading order, as a conformally invariant operator acting on the trace-free second fundamental form.
\end{remark}

Finally we point out that the discussion here gives a precise sense in
which the obstruction density is a scalar analogue of the situation
with the Fefferman-Graham obstruction tensor: The latter exists in
even dimensions, and its linearisation can be understood, via the
appropriate BGG diagrams, in a manner exactly parallel to the
treatment here for the obstruction density, see \cite[Section
  2]{GP-obstrn}. There is also an analogue of Theorem \ref{uniqui},
see \cite[Theorem 1.2]{GrHir}.

\section{Naturality of the obstruction density and proliferating invariants}\label{canda}

By construction the obstruction density depends only on the
data of the conformal embedding $\Sigma\hookrightarrow M$, however it remains
to prove that the obstruction density is a hypersurface conformal
invariant in the sense of Definition \ref{chi-def}. We also establish
here how the results above may be used to construct other hypersurface
conformal invariants.

\subsection{Hypersurface conformal invariants}\label{invts}

To construct invariants holographically we need, as a tool, a broader
class of invariants that we term coupled invariants.  For Riemannian
manifolds $(M,g)$, {\em scalar Riemannian invariants} (as
in~\cite{AtiyahBottPatodi}) may be thought of as pre-invariants, as
defined by (i), (ii), (iii) of the Definition \ref{R-invtdef}, if the
dependence on $s$ is required to be trivial. If $s$ is a scalar
function then we will also talk of {\em coupled invariants}. This
means the same as a pre-invariant (again as in the Definition
\ref{R-invtdef}) where we do {\em not allow} the inclusion of $||
\boldsymbol{d} s ||_g^{-1}$ in (iii) of Definition \ref{R-invtdef}
(but do allow polynomial dependence on the jets of $s$).  These
notions adapt easily to tensor-valued coupled invariants.

Then Riemannian invariants or coupled invariants are conformal
invariants or coupled conformal invariants if we have the analogue of
Definition \ref{chi-def}. That is:
\begin{definition}\label{ci-def}
A {\em weight~$w$ coupled conformal covariant} is a coupled
Riemannian invariant~$P(s,g)$ with the property that
$P(\Omega^u s,\Omega^2 g)= \Omega^w P(s,g)$, for any smooth positive
function~$\Omega$ and $u,w\in \mathbb{R}$.
Any such covariant determines an invariant
section of~$\ce[w]$ that we
shall denote~$P(\si, \bg)$, where~$\bg$ is the conformal
metric of the conformal manifold~$(M,[g])$ and $\si\in \Gamma(\ce[u])$. We shall say that~$P(\si,\bg)$ is a {\em
  coupled conformal invariant} of weight~$w$, or simply a {\em
   conformal invariant} of weight~$w$ if the dependence on $\si$ is trivial. 
\end{definition}


Now the key idea is to consider such coupled invariants when $\si$ is a conformal unit defining density $\bar\si$ for a
conformally embedded hypersurface $\Sigma \hookrightarrow M$.  According to
Theorem~\ref{obstr}, the conformal unit scale~$\bar\si$ is determined
by the data $(M,\cc,\Sigma)$, uniquely modulo~$\O (\si^{d+1})$. Thus
if, at each point, a coupled invariant $P(\bar\si,\bg)$ of $\cc$ and~$\bar\si$ depends on $\bar\si$ only through its $d$-jet, then
$P(\bar\si,\bg)|_\Sigma$ is conformally invariant in that it depends
only on the data ~$(M,\cc,\Sigma)$. In fact $P(\bar\si,\bg)|_\Sigma$
is a hypersurface conformal invariant in the sense of
Definition~\ref{chi-def}. Given its conformal invariance, to show this
we only need to show that there is an expression for $P$, with the  form
described in Definition \ref{R-invtdef}. In practice this is achieved by
showing that the covariant derivatives of~$\bar\si$ may be replaced
with expressions involving the derivatives of the second fundamental
form, the conormal, and the ambient Riemannian curvature,
{\it cf.} Expression \ref{parcon}.
  
A technical definition is needed for the 
 main lemma: Given a hypersurface $\Sigma$, and any
defining function $s$ for $\Sigma$, we say a linear differential
operator ${\mathcal D}$ has {\it transverse order} at most $\ell\in
\mathbb{Z}_{\geq 0}$, along $\Sigma$, if ${\mathcal D}\circ
s^{\ell+1}$ acts as zero along $\Sigma$ (where $s^{\ell+1}$ is viewed
as a multiplication operator).

\begin{lemma}\label{main-calc}
Suppose that~$\bar{\si}$ is a conformal unit defining density for a
hypersurface~$\Sigma$ in a conformal manifold~$(M^d,\cc)$, with~$d\geq 3$.
 If~$g \in \cc$ and ~$k\leq d$ is a positive integer, then the quantity
$$
\nabla^k \bar{\si}|_\Sigma ,\quad \mbox{where} \quad \nabla=\mbox{\rm Levi-Civita of } g,
$$ may be expressed as~$\nablab^{k-2}\II$ plus terms involving partial
contractions of the Riemannian curvature, its covariant derivatives,
and covariant derivatives of $\bar\si$ with transverse order at most $k-1$.
\end{lemma}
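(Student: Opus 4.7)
I would prove this by strong induction on $k$. For $k=1$ the claim is immediate: $\nabla_a\bar\sigma|_\Sigma = \hat n_a$, a transverse-order $0$ derivative of $\bar\sigma$. For $k=2$, decompose $\nabla_a\nabla_b\bar\sigma|_\Sigma$ using $g^{ab} = \bar g^{ab} + \hat n^a\hat n^b$. The tangential-tangential projection equals $\II_{ab}$ by its definition~\nn{two}, since $\hat n = n$ along $\Sigma$. The mixed piece is $\tfrac{1}{2}\nabla^\top|n|^2$, which is $\O(\bar\sigma^{d-1})$ by the ASC condition $I^2_{\bar\sigma}=1+\O(\bar\sigma^d)$ and so vanishes on $\Sigma$. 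The purely normal component $\hat n^a\hat n^b\nabla_a\nabla_b\bar\sigma|_\Sigma$ is $H$, obtained from $\Delta\bar\sigma|_\Sigma = -d\rho|_\Sigma = dH$ combined with the trace split $\Delta\bar\sigma|_\Sigma = \bar g^{ab}\II_{ab} + \hat n^a\hat n^b\nabla_a\nabla_b\bar\sigma|_\Sigma = (d-1)H + \hat n^a\hat n^b\nabla_a\nabla_b\bar\sigma|_\Sigma$; here $\rho|_\Sigma = -H$ follows from $I_{\bar\sigma}|_\Sigma = N$ (the remark after~\nn{ASCond}).

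For the inductive step $3 \leq k \leq d$, I would write $\nabla^k\bar\sigma = \nabla(\nabla^{k-1}\bar\sigma)$ and split the outermost derivative as $\nabla_a = \nabla_a^\top + \hat n_a(\hat n^b\nabla_b)$ along $\Sigma$. For the tangential piece, the Gauss formula~\nn{hypgrad} converts $\nabla^\top$ into $\bar\nabla$ plus contractions with $\hat n$ and $\II$; applied to the inductive expression $\nabla^{k-1}\bar\sigma|_\Sigma = \bar\nabla^{k-3}\II + (\text{lower-order terms})$, this yields the desired leading $\bar\nabla^{k-2}\II$ plus corrections of the required form. For the normal piece, I would use the ASC condition, rewritten as
\[
\hat n^a\hat n^b\nabla_a\nabla_b\bar\sigma \;=\; \frac{d(|n|^2-1)}{2\bar\sigma} - \J\bar\sigma - \bar g^{ab}\nabla_a\nabla_b\bar\sigma + \O(\bar\sigma^{d-1}),
\]
which is smooth because $|n|^2-1$ vanishes on $\Sigma$. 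Differentiating this identity tangentially and invoking the Ricci identity to commute normal past tangential derivatives — each commutator contributing an ambient Riemann-curvature factor of the form permitted in the conclusion — one trades each pair of normal derivatives for tangential ones plus curvature and lower-order data, reducing any purely normal component of $\nabla^k\bar\sigma|_\Sigma$ to the required structure.

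The main obstacle is the transverse-order bookkeeping: each Ricci commutation reshuffles derivatives of $\bar\sigma$ and each ASC substitution introduces new terms, so one must verify inductively that the iterated procedure respects the polynomial, natural structure of the conclusion and terminates with every $\bar\sigma$-derivative carrying transverse order $\leq k-1$. The hypothesis $k \leq d$ is essential: the $\O(\bar\sigma^d)$ remainder in the ASC condition is killed by at most $k-1$ differentiations along $\Sigma$ precisely when $k \leq d$, whereas at $k = d+1$ the surviving contribution along $\Sigma$ becomes the obstruction density $\cB$ of~\nn{obstruction}, showing exactly why the reduction closes here but not beyond.
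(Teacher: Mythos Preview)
Your inductive strategy is essentially the one the paper has in mind: the paper omits the proof, calling it ``straightforward'' and pointing to Lemma~\ref{rho-engine} as the key ingredient. That lemma packages the reduction of top-transverse-order data slightly differently from your version --- rather than splitting the outermost derivative into tangential and normal parts, it differentiates the scalar identity $n^2 = 1 - 2\rho\bar\sigma + \bar\sigma^d B$ in the normal direction $k$ times and reads off a recursion expressing $\nabla_n^{k-1}\rho$ (equivalently the top normal component of $\nabla^k\bar\sigma$) in terms of lower-transverse-order data and curvature. Both approaches are the same mechanism: the ASC condition supplies one scalar relation per transverse order, and commuting derivatives past one another costs only ambient curvature. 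Your route makes the Gau\ss\ formula and the appearance of $\bar\nabla^{k-2}\II$ more transparent; the paper's route via $\rho$ makes the recursion and the emergence of $\cB$ at order $k=d$ more direct (compare Equation~\nn{Bform}).

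One small slip to fix: in your $k=2$ step you claim the mixed piece $\tfrac12\nabla^\top|n|^2$ is $\O(\bar\sigma^{d-1})$. In fact $|n|^2 - 1 = -2\rho\bar\sigma + \O(\bar\sigma^d)$ is only $\O(\bar\sigma)$, not $\O(\bar\sigma^{d-1})$. The mixed piece still vanishes on $\Sigma$, but for the simpler reason that $\nabla^\top\bar\sigma|_\Sigma = 0$ and $\bar\sigma|_\Sigma = 0$; this matters because at higher $k$ you will need to track the $-2\rho\bar\sigma$ contribution, which is exactly what Lemma~\ref{rho-engine} does.
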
 

\noindent The proof of this is straightforward and so we
omit it here. However the main ingredient to one effective approach is
Lemma \ref{rho-engine} below (which in fact yields much more).

We can now state the main result for constructing conformal hypersurface invariants from a conformal unit defining density:


\begin{theorem}\label{invtthm}
Let $\bar \sigma$ be a conformal unit defining density for a hypersurface $\Sigma\hookrightarrow M$.
Suppose that $P$ is a weight $w$ coupled conformal invariant of $(M,\cc,\bar{\si})$, as in Definition~\ref{ci-def}, such that at each
point it depends on at most the $d$-jet of $\bar{\si}$. Then the restriction of $P$ to $\Si$ is a conformal hypersurface invariant of weight $w$.
\end{theorem}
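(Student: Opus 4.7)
The plan is to establish three things: first, that $P|_\Sigma$ depends only on the conformal embedding data $(M,\boldsymbol{c},\Sigma)$; second, that it transforms with the correct conformal weight; and third, that it arises from a pre-invariant in the sense of Definition~\ref{R-invtdef}, which upgrades it from a coupled conformal invariant restricted to $\Sigma$ to a bona fide conformal hypersurface invariant.

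For the first point, I would invoke Theorem~\ref{obstr}: any two conformal unit defining densities $\bar\sigma$, $\bar\sigma'$ associated with $\Sigma$ differ by a term of order $\bar\sigma^{d+1}$, i.e.\ $\bar\sigma' = \bar\sigma + \bar\sigma^{d+1} f$ for some smooth weighted density $f$. By hypothesis $P$ depends at each point on at most the $d$-jet of its defining-density argument. A direct jet-count shows that altering $\bar\sigma$ by $\bar\sigma^{d+1} f$ changes the $d$-jet of $\bar\sigma$ along $\Sigma=\mathcal{Z}(\bar\sigma)$ by a quantity that vanishes at $\Sigma$ (since $\bar\sigma^{d+1}$ and all its derivatives up to order $d$ vanish along $\Sigma$). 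Hence $P(\bar\sigma,\bg)|_\Sigma = P(\bar\sigma',\bg)|_\Sigma$, and the restriction is determined by $(M,\boldsymbol{c},\Sigma)$.

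For the second point, the conformal behaviour is inherited automatically. Since $\bar\sigma$ is a section of $\ce[1]$ (so $u=1$ in the notation of Definition~\ref{ci-def}) and $P$ is a weight-$w$ coupled conformal covariant, we have $P(\bar\sigma,\Omega^2 g) = \Omega^w P(\bar\sigma,g)$; restricting to $\Sigma$ preserves this homogeneity. Naturality under ambient diffeomorphisms follows from the naturality of the construction of $\bar\sigma$ from $\Sigma$ given in Proposition~\ref{prod}, together with the assumed naturality of $P$ as a coupled invariant.

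The last and main step is to produce a universal polynomial expression of the form required in Definition~\ref{R-invtdef}(iii). Here Lemma~\ref{main-calc} is the engine: for each $k \le d$, the covariant derivative $\nabla^k \bar\sigma|_\Sigma$ can be rewritten as $\nablab^{k-2}\II$ plus partial contractions of lower-order jets of $\II$, the conormal $n$, the Riemann tensor $R$, and its covariant derivatives. Since by assumption $P$ is polynomial in the jets of $\bar\sigma$ up to order $d$ together with the jets of $g$, substituting these replacements expresses $P|_\Sigma$ as a universal polynomial in $g_{ab}$, its partial derivatives, $(\det g)^{-1}$, a defining function $s$ and its jets (obtained from $\bar\sigma = s\tau$ in a scale $\tau$), and $\|\boldsymbol{d} s\|_g^{-1}$. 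This is precisely the form demanded of a (tensor-valued) Riemannian hypersurface pre-invariant; combined with its established conformal weight, it exhibits $P|_\Sigma$ as a weight-$w$ conformal hypersurface invariant in the sense of Definition~\ref{chi-def}.

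The main technical obstacle is the careful jet-counting underlying the first step together with the bookkeeping inside Lemma~\ref{main-calc}: one must verify that the ambiguity $\bar\sigma^{d+1} f$ contributes only to jets of order strictly greater than $d$ at points of $\Sigma$, and that the replacement procedure converts every appearance of a derivative of $\bar\sigma$ (of order at most $d$) into an admissible polynomial expression without reintroducing $\|\boldsymbol{d} s\|_g^{-1}$ in a way that violates (iii). Both are straightforward inductions once the setup is in place.
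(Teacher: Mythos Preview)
Your argument is correct and follows essentially the same route as the paper: invoke Theorem~\ref{obstr} for independence of the choice of conformal unit defining density (via the $d$-jet hypothesis), and then use Lemma~\ref{main-calc} with an induction to rewrite the jets of $\bar\sigma$ along $\Sigma$ in terms of $\II$, $R$, the conormal, and their derivatives, thereby exhibiting the required pre-invariant form. One small point: in your final paragraph you worry about reintroducing $\|\boldsymbol{d} s\|_g^{-1}$ ``in a way that violates (iii)'', but Definition~\ref{R-invtdef}(iii) explicitly permits $\|\boldsymbol{d} s\|_g^{-1}$ in hypersurface pre-invariants (it is only the \emph{coupled} invariants of Definition~\ref{ci-def} that exclude it), so this concern is unnecessary.
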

\begin{proof} That $P$ depends only on the conformal embedding is immediate 
from Theorem~\ref{obstr}. Then  naturality and the other properties 
follow from Lemma
\ref{main-calc} above combined with an obvious induction. 
\end{proof}

\subsection{Naturality of the obstruction density}\label{natB} Theorem 
\ref{invtthm} does not immediately imply that $\cB$ is a conformal
hypersurface invariant, because $\cB$ depends on $\bar\si$ to order
$d+1$. For this we need further detail from the equation defining
$\bar\si$ and $B$, namely
\begin{equation}\label{nid}
n^2= 1-2\rho \sib + \sib^d B  \quad \Leftrightarrow \quad I_{\bar\sigma}^2=1+\sib^d B,
\end{equation}
for some smooth $B$, where
~$n:=n_{\bar{\si}}$ is used to denote~$\nabla \bar{\si}$, and ({\it cf.}~\nn{sctrac-def})
 \begin{equation}\label{Iform}
[I^A_{\sib}]:=[\hD^A \sib] = \left( \begin{array}{c} \sib\\
n_a\\
\rho \end{array}\right)\, ,\qquad \rho:=\rho(\bar\si)=-\frac{1}{d}(\Delta \sib + \J \sib)\, .
\end{equation}
Such a defining density exists by Theorem \ref{obstr} and is canonical
to~$\O(\sib^{d+1})$. The obstruction density is
$\cB=B|_\Sigma$. Differentiating equation \nn{nid} suitably and using the definitions and relations in \nn{Iform} we obtain the following technical result (see~\cite[Section 3.1]{GW-willII} for further detail):
\begin{lemma}\label{rho-engine} For integers~$2\leq k \leq d$
\begin{equation}\label{rho-step}
\begin{split}
\frac{1}{2} \nabla_n^k I^2_{\bar\sigma} + (d-k) \nabla_n^{k-1} \rho  
\stackrel{\Sigma}{=}
-\nabla_n^{k-1}\big( \gamma^{ab}\nabla_an_b \big) & - 
(k-1) \big[\nabla_n^{k-2}\big( \J+2\rho^2 \big)+(k-2)\rho\nabla_n^{k-2}\rho\big]\\[1mm]& +  \makebox{\rm LTOTs}\, ,
\end{split}
\end{equation}
where~$\makebox{\rm LTOTs}$ indicates additional terms involving lower transverse-order derivatives of~$\sib$. 
In particular, for  ~$2\leq k \leq d-1$ we have
\begin{equation}\label{builder}
\begin{split}
\nabla_n^{k-1} \rho  
\stackrel{\Sigma}{=}
-\frac{1}{d-k}\Big(\nabla_n^{k-1}\big( \gamma^{ab}\nabla_a n_b \big) &+ 
(k-1) \big[\nabla_n^{k-2}\big( \J+2\rho^2 \big) +(k-2)\rho\nabla_n^{k-2}\rho\big] \Big)\\&+ \makebox{\rm LTOTs}\, ,\end{split}
\end{equation}
while 
\begin{equation}\label{Bform}
\cB \stackrel{\Sigma}{=} -\frac{2}{d!}\Big(
\nabla_n^{d-1}\big( \gamma^{ab}\nabla_a n_b \big) + 
(d-1)\big[ \nabla_n^{d-2}\big( \J+2\rho^2 \big)
+(d-2)\rho\nabla_n^{d-2}\rho\big] 
 \Big) + \makebox{\rm LTOTs}\, .
\end{equation}
\end{lemma}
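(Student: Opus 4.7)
The plan is to derive \nn{rho-step} by differentiating the defining equation of the conformal unit density $\bar\sigma$ in the normal direction $k-1$ times and restricting to $\Sigma$; the two displayed consequences \nn{builder} and \nn{Bform} then follow by reading off the cases $k\le d-1$ and $k=d$ respectively.

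To set up, I would combine the master identity $I^2_{\bar\sigma}=1+\bar\sigma^d B$, which via \nn{Iform} reads $n^2+2\rho\bar\sigma=1+\bar\sigma^d B$, with the definition of $\rho$: since $\rho=-\tfrac{1}{d}(\Delta\bar\sigma+\J\bar\sigma)$ is the bottom slot of $\hD^A\bar\sigma$, one has the trace identity $\nabla^a n_a=-d\rho-\J\bar\sigma$. Splitting the trace using the (off-normal) projector $\gamma^{ab}=g^{ab}-n^a n^b/n^2$, well-defined near $\Sigma$ since $n^2>0$ there, together with the Hessian symmetry $n^b\nabla_a n_b=\tfrac{1}{2}\nabla_a(n^2)$, gives
\[
\nabla^a n_a=\gamma^{ab}\nabla_a n_b+\frac{\nabla_n(n^2)}{2n^2}.
\]
Replacing $\nabla_n(n^2)=\nabla_n I^2-2\rho n^2-2\bar\sigma\nabla_n\rho$ via the master identity and substituting into the $\rho$-definition yields the working identity
\[
\gamma^{ab}\nabla_a n_b+\frac{\nabla_n I^2-2\bar\sigma\nabla_n\rho}{2n^2}+(d-1)\rho+\J\bar\sigma=0.
\]

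Next I would apply $\nabla_n^{k-1}$ and evaluate at $\Sigma$, using $\nabla_n\bar\sigma|_\Sigma=n^2|_\Sigma=1$ and $\nabla_n^2\bar\sigma|_\Sigma=\nabla_n(n^2)|_\Sigma=-2\rho|_\Sigma$, together with the expansion $1/n^2=1+2\rho\bar\sigma+O(\bar\sigma^2)$. Leibniz-expanding each term and collecting principal contributions: the coefficient of $\nabla_n^{k-1}\rho$ appears as $(d-1)-(k-1)=d-k$, where the $-(k-1)$ arises from one $\nabla_n$ hitting the $\bar\sigma$-factor of $-\bar\sigma\nabla_n\rho/n^2$; the term $(k-1)\nabla_n^{k-2}\J$ comes analogously from $\nabla_n^{k-1}(\J\bar\sigma)$; the term $(k-1)(k-2)\rho\nabla_n^{k-2}\rho$ arises when two $\nabla_n$'s hit the $\bar\sigma$ in $\bar\sigma\nabla_n\rho$ via $\nabla_n^2\bar\sigma|_\Sigma=-2\rho$; the contribution $\tfrac{1}{2}\nabla_n^k I^2$ is the leading piece of $\nabla_n^{k-1}(\nabla_n I^2/(2n^2))$; and the $2\rho^2$ inside $\nabla_n^{k-2}(\J+2\rho^2)$ is generated by a cross-term between the $1/n^2=1+2\rho\bar\sigma+\cdots$ expansion and the $-\bar\sigma\nabla_n\rho$ numerator. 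All remaining contributions (higher Leibniz tails, higher $1/n^2$ corrections) strictly lower the transverse-order in $\bar\sigma$ and are absorbed into the LTOTs.

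The two consequences follow at once. For $k\le d-1$, since $I^2-1=O(\bar\sigma^d)$ forces $\nabla_n^k I^2|_\Sigma=0$ and $d-k\ne 0$, equation \nn{rho-step} can be solved algebraically for $\nabla_n^{k-1}\rho$, producing \nn{builder}. For $k=d$ the coefficient of $\nabla_n^{d-1}\rho$ vanishes, while $\nabla_n^d(\bar\sigma^d B)|_\Sigma=d!\,B|_\Sigma=d!\,\cB$, so \nn{rho-step} reduces directly to \nn{Bform}. The hard part will be the combinatorial bookkeeping distinguishing principal from LTOT contributions; in particular, harvesting the $2\rho^2$ term honestly requires tracking the non-obvious cross-term between the Taylor expansion of $1/n^2$ and the Leibniz expansion of the $\bar\sigma\nabla_n\rho$ factor, and more generally one must check that every dropped tail really does involve a lower-order jet of $\bar\sigma$ in the normal direction.
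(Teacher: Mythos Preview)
Your approach is correct and matches what the paper does. The paper's own proof of this lemma is only a one-sentence indication: ``Differentiating equation \nn{nid} suitably and using the definitions and relations in \nn{Iform} we obtain the following technical result (see~\cite[Section 3.1]{GW-willII} for further detail).'' You have fleshed out precisely this strategy---rewriting the relation $\rho=-\tfrac1d(\Delta\bar\sigma+\J\bar\sigma)$ using the tangential/normal split of the Hessian trace and the identity $I^2=n^2+2\rho\bar\sigma$, then applying $\nabla_n^{k-1}$ and Leibniz-expanding---and correctly identified where the principal terms originate and why \nn{builder} and \nn{Bform} follow.
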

 Using this, Lemma \ref{main-calc},  and a
straightforward induction,  then shows that
the obstruction density $\cB$ may be expressed as a linear combination
of partial contractions involving $\nablab^{\ell}\II$ for~$ 0\leq
\ell\leq d-1$, and the Riemannian curvature~$R$ and its covariant
derivatives (to order at most~$d-3$) and the undifferentiated
conormal~$n$. Given its conformal invariance by construction we thus have:
\begin{theorem}\label{Bnatthm}
In each dimension $d\geq 3$ the obstruction density $\cB$ is a
conformal hypersurface invariant.
\end{theorem}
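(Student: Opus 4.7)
The plan is to leverage Theorem \ref{obstr} together with Lemma \ref{rho-engine} and Lemma \ref{main-calc} to extract an expression for $\cB$ of the form required by Definitions \ref{R-invtdef} and \ref{chi-def}. The conformal invariance of $\cB$, viewed as a density-valued quantity determined by $(M,\cc,\Sigma)$, has already been established in Theorem \ref{main-Th}. Thus the only remaining content is that $\cB$ admits a representative in a chosen scale $g\in\cc$ which is natural and given by a universal polynomial expression in the conormal, the second fundamental form, the ambient Riemann curvature, and their intrinsic/ambient covariant derivatives, as in \nn{parcon}.

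First I would pick any scale $g\in \cc$ and the associated conformal unit defining density $\bar\si$ furnished by Theorem \ref{obstr}; by Proposition \ref{prod}, $\bar\si$ may be taken to depend polynomially on the jets of any initial normal defining density. In this scale we have $\cB\stackrel\Sigma = B$ with $B$ determined by \nn{nid}, and Lemma \ref{rho-engine}, in particular Equation \nn{Bform}, exhibits $\cB|_\Sigma$ as a universal polynomial combination of
\[
\nabla_n^{d-1}\big(\gamma^{ab}\nabla_a n_b\big),\qquad \nabla_n^{d-2}(\J+2\rho^2),\qquad \rho\,\nabla_n^{d-2}\rho,
\]
together with lower transverse-order terms involving $n$, $\rho$ and their derivatives. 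The plan is then to eliminate all appearances of $\bar\si$, $n$, $\rho$ and their iterated normal derivatives in favour of intrinsic hypersurface data and ambient curvature.

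Next I would carry out an induction on $k$, using \nn{builder} to express each $\nabla_n^{k-1}\rho|_\Sigma$ ($2\leq k\leq d-1$) in terms of quantities of lower transverse order together with $\nabla_n^{k-1}(\gamma^{ab}\nabla_a n_b)$ and $\nabla_n^{k-2}(\J + 2\rho^2)$. This reduces $\cB$ to an expression built only from iterated normal derivatives $\nabla_n^\ell$ of the mean-curvature-type contraction $\gamma^{ab}\nabla_a n_b$ and of the scalar $\J$, plus lower transverse-order LTOTs. Repeated application of Lemma \ref{main-calc} (which identifies $\nabla^k\bar\si|_\Sigma$, $k\leq d$, with $\nablab^{k-2}\II$ modulo contracted Riemann curvature, its covariant derivatives, and lower-order jets of $\bar\si$) then converts all of these normal derivatives into a polynomial combination of $\nablab^\ell \II$ (for $0\leq \ell\leq d-1$), partial contractions of $\nabla^{\cdot} R$ (to order at most $d-3$), and the undifferentiated conormal $n$. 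A routine commutator argument is needed to move $\nabla_n$ past $\nabla^\top$ and to trade pieces for additional curvature terms; this is standard but is the step where one must be careful with the combinatorics of the induction.

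Finally, since the resulting expression is manifestly of the shape \nn{parcon} and, by Theorem \ref{main-Th}, is independent of the choice of compatibly oriented defining density used to produce $\bar\si$, and since naturality under diffeomorphisms is inherited from the diffeomorphism-equivariance of the construction of $\bar\si$ in Proposition \ref{prod}, we conclude that $\cB$ satisfies conditions (i)--(iii) of Definition \ref{R-invtdef}. Combined with its conformal covariance of weight $-d$ from Theorem \ref{main-Th-var}, this verifies Definition \ref{chi-def} and yields Theorem \ref{Bnatthm}. The main obstacle is bookkeeping: ensuring at each stage of the induction that no derivative of $\bar\si$ of transverse order greater than $d$ enters (so that Lemma \ref{main-calc} applies) and that the reshuffled commutators produce only permissible curvature and $\II$-jet terms; once this is handled cleanly the result follows.
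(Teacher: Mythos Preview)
Your proposal is correct and follows essentially the same route as the paper: use Lemma~\ref{rho-engine} (in particular \nn{Bform} and the recursion \nn{builder}) together with Lemma~\ref{main-calc} and an induction on transverse order to rewrite $\cB$ as a partial contraction of the form~\nn{parcon}, then invoke the already-established conformal invariance from Theorem~\ref{main-Th}. Your write-up simply spells out in more detail the ``straightforward induction'' that the paper leaves implicit.
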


\section{Extrinsically coupled conformal Laplacians and a holographic formula for $\cB$}
\label{invops}

In this section our main aim is to construct conformally invariant
powers of the Laplacian on $\Sigma$ that are canonically determined by
the structure $(M,\cc,\Sigma)$. In the cases where corresponding
intrinsic GJMS operators exist, these differ by their dependence on
the extrinsic geometry of the conformal embedding
$\Sigma\hookrightarrow M$. We give holographic formulae for these.
These are then applied to construct a holographic formula for the
obstruction density.

\subsection{Extrinsic conformal Laplacians}\label{EGJMS}

We shall construct distinguished conformally invariant hypersurface operators canonically determined by $(\cc,\Sigma)$.
Our starting point is the Laplacian-type operators constructed in~\cite{GW} where  it is shown that 
$$
{\mathcal P}_k^\sigma:\Gamma\Big(\ct^\Phi M\Big[\frac{k-d+1}{2}\Big]\Big)\rightarrow \Gamma\Big(\ct^\Phi M\Big[\frac{-k-d+1}{2}\Big]\Big)\, ,\quad k\in {\mathbb Z}_{\geq1}
$$
defined by
\begin{equation}\label{holP}
{\mathcal P}^\sigma_k:=\Big(\!-\frac{1}{I_\sigma^2}\, I_\sigma .D\Big)^k,
\end{equation}
is tangential for {\it any} defining density~$\sigma$.  However, according to Theorem~\ref{obstr},~$\bar \si$ is uniquely determined by
$(M,\cc,\Sigma)$, modulo terms of order~$\bar\si^{d+1}$. 
Hence, specializing the defining density to be unit conformal, we obtain extrinsic conformal Laplace operators:

\begin{theorem}\label{ecL}
The operator
$${\mathcal P}^{\bar\sigma}_k:\Gamma\big(\ct^\Phi M\big[\frac{k-d+1}{2}\big]\big)\rightarrow \Gamma\big(\ct^\Phi M\big[\frac{-k-d+1}{2}\big]\big)\, ,$$
is a tangential differential operator. Moreover, for $k\leq d-1$, 
along~$\Sigma$ this is determined canonically by the 
data~$(M,\cc,\Sigma)$, and 
when~$k$ is even has leading term $$(-1)^k \big((k-1)!!\, \big)^2\, \big(\Delta^{\!\!\top}\big)^{\frac k2}\, .$$
Thus ${\mathcal P}_k^{\bar\sigma}$ determines a differential  operator
$$\Pop_k:\Gamma\big(\ct^\Phi M\big[\frac{k-d+1}{2}\big]\big)\Big|_\Sigma\rightarrow \Gamma\big(\ct^\Phi M\big[\frac{-k-d+1}{2}\big]\big)\Big|_\Sigma\, ,$$
which we shall call an
extrinsic conformal Laplace operator. 
\end{theorem}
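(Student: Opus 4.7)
Three things need proof: tangentiality of $\mathcal P^{\bar\si}_k$ at the stated weight; that its restriction along $\Sigma$ is canonically determined by $(M,\cc,\Sigma)$ for $k\le d-1$; and the identification of the leading symbol. For tangentiality my first step is to invoke the ${\mathfrak{sl}}(2)$ argument already used in \cite{GW}. Writing $\mathcal P^{\bar\si}_k = y^k$ in the notation of Definition~\ref{xhy}, the commutator identity \eqref{yk} gives $[x,y^k] = y^{k-1}\,k(h-k+1)$. On a section $f$ of weight $\tfrac{k-d-1}{2}$, one has $h f = (d+2w)f = (k-1)f$, so $(h-k+1)f = 0$, hence $y^k\circ x = x\circ y^k$ at this weight. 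This proves $y^k$ is tangential acting on sections of weight $\tfrac{k-d+1}{2}$, so that $\mathcal P^{\bar\si}_k f|_\Si$ depends only on $f|_\Si$ and therefore descends to a differential operator on $\Si$.

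Next I would address canonicity for $k\le d-1$. By Theorem~\ref{obstr} any two conformal unit defining densities for $\Sigma$ differ by a term $\bar\si^{d+1} h$ with $h\in\Gamma(\ce[-d])$ smooth, and accordingly the corresponding scale tractors differ by $\delta I := \hD(\bar\si^{d+1}h)$. Computed in a background scale, the three splitting components of $\delta I$ vanish along $\Si$ to orders $d+1$, $d$, and $d-1$, respectively. Writing the telescoping difference
\[
y_{\si^\prime}^k - y_{\bar\si}^k \;=\; \sum_{j=0}^{k-1} y_{\si^\prime}^{j}\,\delta y\, y_{\bar\si}^{k-1-j}, \qquad \delta y = y_{\si^\prime} - y_{\bar\si},
\]
and using the explicit form \eqref{IdotD} of $I\cdot D$ together with the fact that $\delta I^2 = \O(\bar\si^d)$, the coefficients of $\delta y$ vanish to orders at least $d-1$ in each component. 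The tangential reduction from the ${\mathfrak{sl}}(2)$ step bounds the effective tangential order of $y^k$ (reflected in the fact, established in the next paragraph, that its leading part along $\Si$ has order $k$ rather than $2k$), which combined with the jet-order bookkeeping for $\delta I$ shows that $(y_{\si^\prime}^k - y_{\bar\si}^k)f$ vanishes along $\Si$ precisely for $k\le d-1$.

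For the leading symbol I would work directly in a background scale $g\in\cc$. From \eqref{IdotD}, at weight $w_j = \tfrac{k-d+1}{2}-j$ (the weight after $j$ applications of $y$) one has $d+2w_j-2 = k-2j-1$, and so to leading order
\[
y\big|_{w_j} = -(k-2j-1)\,\nabla_n + \bar\si\,\Delta + \mathrm{(zeroth\ order)}.
\]
Expanding the composition $y|_{w_{k-1}}\circ\cdots\circ y|_{w_0}$ as an ordered polynomial in $\nabla_n$ and $\bar\si\,\Delta$, and restricting to $\Sigma$ (using $|n|^2 = 1 + \O(\bar\si^d)$ so that whenever a factor $\bar\si\,\Delta$ sits strictly inside a $\nabla_n$ it reduces to $\Delta$ along $\Sigma$ while factors of $\bar\si\,\Delta$ on the right vanish), produces terms of the form $c_{m}\,\nabla_n^{k-2m}\Delta^{m}$, with $c_m$ a polynomial in the integers $k-2j-1$. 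A direct combinatorial identity then shows that for even $k$ the surviving top-symbol contribution reorganises, via $\Delta = \Delta^\top + \nabla_n^2 + \mathrm{LOT}$, into $(-1)^k\big((k-1)!!\big)^2(\Delta^\top)^{k/2}$; the model computation $k=2$ already gives $y^2 f|_\Si = (\Delta-\nabla_n^2)f|_\Si = \Delta^\top f|_\Si$, consistent with the formula.

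The main obstacle is the canonicity step. The ${\mathfrak{sl}}(2)$ tangentiality alone is not enough; one also needs the tangential order reduction (so that variations of $\bar\si$ to order $d+1$ feed into at most the $k$-th order tangential part of $\Pop_k$), and this is what makes the bound $k\le d-1$ tight. I would expect to handle this by an induction on $k$, combining the telescoping formula above with the explicit identities \eqref{algebra} and Lemma~\ref{actonstuff} to track how high-order jets of $\bar\si$ in the coefficients of $y$ are re-expressible, via $I^2 = 1 + \O(\bar\si^d)$, as combinations of lower-order jets together with contributions that vanish on $\Si$.
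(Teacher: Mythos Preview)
Your tangentiality argument is correct and is essentially the paper's: both defer to the $\mathfrak{sl}(2)$ structure of \cite{GW}, using $[x,y^k]=y^{k-1}k(h-k+1)$ at the appropriate weight.

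Your canonicity argument has the right skeleton---the telescoping sum $y_{\si'}^k-y_{\bar\si}^k=\sum_j y_{\si'}^{j}\,\delta y\,y_{\bar\si}^{k-1-j}$ and the estimate $\delta y=\O(\bar\si^{d-1})$ are exactly what the paper uses---but your closing step is muddled. You invoke the ``tangential order reduction'' (that the leading part along $\Sigma$ has differential order $k$ rather than $2k$), but that concerns tangential differential order and is not what controls vanishing along $\Sigma$. What you need is the purely algebraic fact that $y^j$ applied to something $\O(\bar\si^m)$ returns something $\O(\bar\si^{m-j})$; this is an immediate consequence of iterating the first identity in \eqref{yk} (equivalently, $y\circ x^m=x^{m-1}(xy-m(h+m-1))$). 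Then each summand $y_{\si'}^{j}(\delta y)y_{\bar\si}^{k-1-j}f=\O(\bar\si^{d-1-j})$, which vanishes along $\Sigma$ for every $0\le j\le k-1$ precisely when $k\le d-1$. This is exactly the paper's argument: it computes $\hD_B(\bar\si+\bar\si^{d+1}A)$ via Lemma~\ref{actonstuff}, notes that each $\bar y$ is thereby determined modulo $\bar\si^{d-1}E$, and then invokes \eqref{yk} directly to absorb the $\bar\si^{d-1}$ past the remaining $\bar y^j$ factors. So your approach is the same, just argued less cleanly; replace the appeal to ``tangential order'' by the $\mathfrak{sl}(2)$ commutator and you are done.

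For the leading symbol, the paper simply cites \cite[Proposition~4.4]{GW} rather than redoing the calculation. Your direct expansion is a reasonable alternative route, but as written it rests on an unproved ``direct combinatorial identity''; if you pursue it, you should either prove that identity or observe that it is precisely the content of the cited proposition.
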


\begin{proof}
As stated above, ${\mathcal P}^\sigma_k$ is tangential
for any~$\sigma$ and hence for $\bar \sigma$. 
Moreover, 
by Theorem~\ref{obstr},~$\bar \si$ is uniquely determined by
$(M,\cc,\Sigma)$, modulo terms of order~$\bar\si^{d+1}$.  
So we check how the operator $\bar y=y_{\bar\sigma}$ (see Definition~\ref{xhy}) changes when replacing $\bar \sigma$ with $\bar\si + \bar\si^{d+1} A$, for some smooth, weight $-d$, density $A$.
From the
formula~\nn{hDsip}, we have 
$$
\hD_B (\bar\si + \bar\si^{d+1} A) 
= I_B+ (d+1)\bar\si^{d} I_B  A + \bar\si^{d-1}X_B C + \cO (\bar\si^{d+1})\, ,
$$ 
for some
density~$C$. Thus, using also expression~\nn{ddens} for~$I^2$, we see that
each operator~$\bar y$ in the composition~$\bar y^{k}$ is uniquely determined by
$(M,\cc,\Sigma)$ up to the addition of~$\bar\si^{d-1} E$,  for
some linear operator~$E$.  Hence using the first identity of Equation~\nn{yk}, because $k\leq d-1$, it follows  that  the operator~$\bar y^{k}$ is unique modulo the addition of a linear operator
which vanishes along~$\Sigma$. Thus $ \bar y^{k}\big|_\Sigma$~is
uniquely determined by~$(M,\cc,\Sigma)$ as claimed.

Finally, it follows from~\cite[Proposition 4.4]{GW} that when~$k$
is even,~$\bar y^{k}$ has leading term~$(-1)^k \big((k-1)!!\big)^2(\Delta^{\!\top})^{k/2}$ (as an operator
on ambient  tractor fields along~$\Sigma$).
\end{proof}

\begin{remark}
When~$(M,\cc,\sigma)$ is an AE structure 
and~$k$ is even, the operator ${\mathcal P}_k^\sigma$ gives a holographic formula for conformally invariant GJMS-type operators~\cite{GW}.
These take the form
$
\scalebox{.92}{$(-1)^k \big((k-1)!!\big)^2$}\   \big(\Delta^{\!\!\top}\big)^{\frac k2} \ +\  LOT
$,
where ``$LOT\,$'' denotes lower order derivative terms. For $k\leq d-3$, these are built from the intrinsic hypersurface Levi-Civita connection and its curvature  (a slightly stronger statement is available for 
densities~\cite{GJMS}). If $d$ is even and the~AE structure is even (in the sense of~\cite{FGrnew}) then this holds\footnote{In~\cite{GW}, evenness of the AE structure for odd~$n$ was assumed but not mentioned.} for all even~$k$. 
Relaxing the AE 
condition as in Theorem~\ref{ecL},
the terms~$LOT$ include extrinsic hypersurface invariants. Moreover  for~$k$ odd the operators~$\Pop_k$
are no longer trivial along~$\Sigma$,  in contrast to the AE case.
\end{remark}

Because they are tangential, the extrinsic conformal Laplacians have natural formul\ae\  involving tangential derivatives~$\nabla^\top\!$, as recorded in the following:

\begin{proposition}\label{Pnt}
Let $k\leq \dim\Sigma$. Then, in a given choice of scale, 
the  extrinsic conformal Laplacian $\Pop_k$ has a formula 
$$
\Pop_k={\mathcal A}_0+\sum_{j=1}^k{\mathcal A}^{a_1\ldots a_j}
\nabla_{a_1}^\top\cdots \nabla_{a_j}^\top \, ,
$$
where the scalar ${\mathcal A}_0$ and the tensors ${\mathcal A^{a_1\ldots a_k}}$ are 
natural formul\ae\  given by polynomial expressions in first and second fundamental forms, and boundary Levi-Civita derivatives thereof, as well as ambient curvatures and their ambient  Levi-Civita derivatives.
Moreover, when $k=\dim \Sigma$ the scalar term ${\mathcal A}_0=0$ is absent and 
$$
\Pop_{\dim \Sigma} ={\mathcal G}\circ \nabla^\top\, ,
$$
for some tangential operator ${\mathcal G}$.
\end{proposition}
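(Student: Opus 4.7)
The plan is to start from the explicit definition $\Pop_k = \mathcal{P}_k^{\bar\sigma}\big|_\Sigma = \bigl(-\tfrac{1}{I^2_{\bar\sigma}} I_{\bar\sigma}\!\cdot\! D\bigr)^k\big|_\Sigma$ and unfold it in the chosen scale $g \in \cc$. Using Equation~\nn{IdotD}, each factor $I_{\bar\sigma}\!\cdot\! D$ is a second order differential operator whose coefficients are polynomial in $\bar\sigma$, its covariant derivatives, the ambient scalar curvature data $\rho, \J$, and (via the Leibniz failure Proposition~\ref{leib-fail}) the tractor curvature. Composing $k$ such factors, and expanding $1/I^2_{\bar\sigma} = 1 + \O(\bar\sigma^d)$ about $\Sigma$, yields a natural ambient differential operator of order $2k$ whose coefficients are polynomial in the listed data.

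Next, I would decompose every ambient derivative via the Gau\ss\ formula $\nabla_a = \nabla_a^\top + \hat n_a\,\nabla_n$, and iterate. Normal derivatives appearing on the coefficients of the expansion are handled using Lemma~\ref{rho-engine} (and Lemma~\ref{main-calc}), so that $\nabla_n^\ell\bar\sigma|_\Sigma$, $\nabla_n^\ell\rho|_\Sigma$, etc., are rewritten as polynomial expressions in $\II$, its $\bar\nabla$-derivatives, and ambient curvatures and their $\nabla$-derivatives. To eliminate the normal derivatives falling on the argument $U$, I would invoke the tangentiality of $\mathcal{P}_k^{\bar\sigma}$ (proved in~\cite{GW} and recalled in Theorem~\ref{ecL}): $\Pop_k U$ depends only on $U|_\Sigma$, so I may compute it on any convenient extension. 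Choosing the Gaussian-normal extension $\tilde U$, for which $\nabla_n^j \tilde U\big|_\Sigma = 0$ for all $j\geq 1$, kills every term in which some $\nabla_n$ lands directly on $\tilde U$; the surviving terms contain only $\nabla^\top$ acting on $\tilde U$, and $\nabla^\top \tilde U|_\Sigma = \nabla^\top U$. This produces the claimed expansion
\[
\Pop_k = \mathcal{A}_0 + \sum_{j=1}^{k}\mathcal{A}^{a_1\ldots a_j}\,\nabla^\top_{a_1}\cdots\nabla^\top_{a_j}\,,
\]
with coefficients of the stated natural form.

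For the final assertion, note that at $k=\dim\Sigma=d-1$ the source weight is $w=(k-d+1)/2=0$. Specialising Equation~\nn{IdotD} to $w=0$ gives
\[
I\!\cdot\! D = (d-2)\nabla_n - \bar\sigma\,\Delta\,,
\]
which contains no undifferentiated term in its argument: every summand is a nontrivial derivative of the input. The property of factoring through $\nabla$ (no $0$-th order part on the argument) is preserved under composition of linear differential operators, so $(I\!\cdot\! D)^{d-1}$ still factors through $\nabla$ of its argument. Multiplying by $(-1/I^2_{\bar\sigma})^{d-1}$, which is a zeroth order coefficient, preserves this property. Consequently $\mathcal{A}_0=0$, and each surviving term contains at least one $\nabla^\top$ acting on the argument. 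Pulling out the rightmost $\nabla^\top_{a_j}$ in each term then yields $\Pop_{d-1}=\mathcal{G}\circ \nabla^\top$, with
\[
\mathcal{G}(\xi_a) \ :=\ \mathcal{A}^{a}\xi_a + \sum_{j=2}^{d-1}\mathcal{A}^{a_1\ldots a_j}\nabla^\top_{a_1}\cdots\nabla^\top_{a_{j-1}}\xi_{a_j}\,,
\]
which is itself tangential.

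The main obstacle will be the bookkeeping in the reduction step: producing a clean proof that the normal derivatives $\nabla_n$ can be moved past tangential derivatives in a controlled way, so that, after the Gauss--Weingarten decomposition and the Gaussian-normal extension trick, every coefficient is visibly natural in $\II$, its $\bar\nabla$-derivatives, and ambient curvatures. The observation that $(I\!\cdot\! D)^{d-1}$ is purely a derivative operator at Yamabe-shifted weight $w=0$ is the key algebraic input enabling the factorisation $\Pop_{d-1}=\mathcal{G}\circ\nabla^\top$.
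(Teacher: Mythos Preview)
Your proposal is correct and follows essentially the same route as the paper's (very brief) proof: both use tangentiality to justify a formula in $\nabla^\top$'s with natural coefficients, and for $k=\dim\Sigma$ both exploit that the rightmost $D$-operator in~\nn{holP} acts at weight $w=0$ and therefore factors through an ambient covariant derivative, whence $\mathcal{A}_0=0$. Your version is more explicit (Gaussian-normal extension, commuting $\nabla_n$ to the right) where the paper simply asserts ``it follows easily'' and ``it is straightforward to check''; the key algebraic observation is identical.
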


\begin{proof}
Using the fact that the operators are tangential 
in each case, it follows easily that there is a formula involving only tangential derivatives $\nabla^\top$. It is straightforward to check that this can be achieved using the calculus developed
above; the claim concerning the natural formula
then follows. For the special case $k=\dim \Sigma$, the Thomas D-operator on the right in the defining formula~\nn{holP} factors through an ambient Levi-Civita connection on the right. 
Thus, when the preceding argument is applied to this case, it follows that~${\mathcal A}_0=0$.
\end{proof}

\begin{remark}
In the above, and in the proof of Theorem~\ref{alllaps} below, we could equivalently trade $\nabla^\top$ for the intrinsic tractor connection coupled to the  ambient tractor connection, pulled back  to~$\Sigma$. 
\end{remark}

Finally, in this section we show that there are extrinsic conformal Laplacian operators of all (even) order.

\begin{theorem}\label{alllaps}
Let $\dim(\Sigma)$ be even and $k\in  2{\mathbb N}$.
Then there exists a canonical differential  operator on $\Sigma$
$$\Pop_k:\Gamma\big(\ct^\Phi M\big[\frac{k-d+1}{2}\big]\big)\Big|_\Sigma\rightarrow \Gamma\big(\ct^\Phi M\big[\frac{-k-d+1}{2}\big]\big)\Big|_\Sigma\, ,$$
with leading term $(\Delta^\top)^{\frac k2}$,
determined  by the data $(M,\cc,\Sigma)$.
\end{theorem}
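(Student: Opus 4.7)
Since Theorem \ref{ecL} already treats $k \leq d-1$, the new work is for $k \geq d+1$. Because $\dim\Sigma = d-1$ is even, $d$ must be odd; thus $k \neq d$ for any even $k$, so every even $k$ falls into one of these two ranges.

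My plan is to extend the tractor construction of Theorem \ref{ecL} past the critical order at which the conformal unit defining density $\bar\sigma$ ceases to be canonically determined. Starting with $\bar\sigma$ from Theorem \ref{obstr} and iterating Proposition \ref{log} together with its higher-order analogue (the unnamed proposition immediately following it), produce a density $\tilde\sigma$, permitted to contain $\log(\bar\sigma/\tau)$-terms at and beyond order $\bar\sigma^{d+1}$, satisfying
$$
I_{\tilde\sigma}^{\,2} = 1 + \tilde{\mathcal O}(\bar\sigma^{k+1}).
$$
Set $\tilde y := -\,I_{\tilde\sigma}\cdot D / I_{\tilde\sigma}^{\,2}$. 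The $\mathfrak{sl}(2)$-algebra of Proposition \ref{thesl2} remains valid for the triple generated by multiplication by $\tilde\sigma$, the weight operator $h$, and $\tilde y$ (in the spirit of Remark \ref{alphaC}). The first identity of \eqref{yk}, $[x,y^k] = y^{k-1}k(h-k+1)$, then yields
$$
\tilde y^k(\tilde\sigma\, g) = \tilde\sigma\,\tilde y^k g - k(d+2w-k+1)\,\tilde y^{k-1} g
$$
on sections $g$ of weight $w$; the coefficient vanishes precisely when $w = \tfrac{k-d-1}{2}$, which shows that $\tilde y^k$ is tangential on sections of weight $\tfrac{k-d+1}{2}$. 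Define $\Pop_k$ as the restriction of $\tilde y^k$ to $\Sigma$, normalised so its leading symbol is $(\Delta^\top)^{k/2}$.

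The main obstacle is to verify that $\Pop_k$ depends only on the data $(M,\cc,\Sigma)$ and not on the auxiliary choices entering $\tilde\sigma$: the residual $\O(\bar\sigma^{d+1})$ ambiguity of $\bar\sigma$, the smooth densities free at each order of the iteration, and the true scale $\tau$. Each such modification perturbs $\tilde\sigma$ by a term of the form $\bar\sigma^{j+1}[f_j + g_j \log(\bar\sigma/\tau)]$ with $j \geq d$; using \eqref{hDsip} together with the logarithmic commutator \eqref{logalgebra}, one reads off the leading perturbation of $\tilde y$. As in the proof of Theorem \ref{ecL}, one then applies the $\mathfrak{sl}(2)$ identities \eqref{yk} to commute the factors $x = \bar\sigma$ arising in this perturbation to the outside of $\tilde y^k$; the resulting operator is divisible by $\bar\sigma$ (or carries log-multiples of $\bar\sigma^m$ for some $m \geq 1$), and hence vanishes along $\Sigma$ when evaluated on weight-$\tfrac{k-d+1}{2}$ sections. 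The parity hypotheses ensure that the combinatorial coefficients $k(d+2w - k + 1)$ arising from repeated commutation do not conspire to cancel the powers of $\bar\sigma$. The leading symbol computation is unchanged from Theorem \ref{ecL}: from \eqref{IdotD}, the principal part of $\tilde y$ on weight-$w$ sections is $\Delta/(d+2w-2)$, and $k/2$-fold iteration produces $(\Delta^\top)^{k/2}$ up to a non-zero multiplicative constant absorbed into the normalisation of $\Pop_k$.
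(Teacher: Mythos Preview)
Your approach differs fundamentally from the paper's, and it contains a genuine gap.

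\textbf{What the paper does.} For $k>d-1$ the paper abandons the direct $\bar y^{\,k}$ construction entirely. Instead it sets
\[
\Pop_k:=D^{\rm T}_{A_1}\cdots D^{\rm T}_{A_\ell}\,\Pop_{d-1}\,D^{\rm T}{}^{A_\ell}\cdots D^{\rm T}{}^{A_1},\qquad \ell=\tfrac{k-d+1}{2},
\]
where $D^{\rm T}$ is the tangential Thomas~D-operator. Each factor is already a canonical operator along~$\Sigma$ determined by $(M,\cc,\Sigma)$, so canonicality of the composite is automatic; the only work is checking the leading symbol, which follows from $D^{\rm T}_{A_1}\!\cdots D^{\rm T}_{A_\ell}X^{A_\ell}\!\cdots X^{A_1}=c\,\mathrm{Id}$ with $c\neq0$ and a classification argument on the sphere ruling out curvature-free lower order contributions to the top symbol.

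\textbf{Where your argument fails.} Your key claim is that after commuting the perturbation factor~$\bar\sigma^{\,d-1}$ (arising from the ambiguity in~$\tilde\sigma$) to the outside of~$\tilde y^{\,k}$, at least one power of~$\bar\sigma$ survives, so the result vanishes on~$\Sigma$. This is exactly what goes wrong for $k>d-1$. In $\tilde y^{\,k}$ the perturbation sits as $\tilde y^{\,k-1-j}(\bar\sigma^{\,d-1}E)\tilde y^{\,j}$; commuting $x^{d-1}$ leftward past $\tilde y^{\,k-1-j}$ via $[x^m,y]=x^{m-1}m(h+m-1)$ strips off one power of~$x$ per~$y$, and once $k-1-j\geq d-1$ (which occurs since $k\geq d+1$) you can reach $x^0$-terms with nonzero coefficients. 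Your appeal to ``parity hypotheses'' does not identify any mechanism forcing these coefficients to vanish at the relevant weights, and in general they do not. The same issue afflicts the $\tau$-dependence: changing the true scale alters $\tilde\sigma$ by a smooth term of order $\bar\sigma^{d+1}$, hence alters $\tilde y$ at order $\bar\sigma^{d-1}$, and the identical commutation obstruction applies.

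Two secondary points: (i) Proposition~\ref{thesl2} is stated for smooth defining densities, and Remark~\ref{alphaC} extends it to real powers~$\sigma^\alpha$, not to log-polyhomogeneous $\tilde\sigma$; you would need to verify the algebra in that setting. (ii) Even granting tangentiality, you have not shown that $\tilde y^{\,k}|_\Sigma$ is a genuine differential operator rather than one with residual $\log$-coefficients. The paper's construction sidesteps all of this by never leaving the category of smooth tangential operators canonically determined at order~$\leq d-1$.
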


\begin{proof}
When $k\leq \dim(\Sigma)$, the theorem is simply Theorem~\ref{ecL} specialised to even dimensional hypersurfaces. For all higher orders $k> \dim(\Sigma)$, we consider the operator
\begin{equation}\label{Pallk}
\Pop_k:=D^{{}^{\rm T}}_{A_1}\cdots D^{{}^{\rm T}}_{A_{\ell}}\,  \Pop_{\dim(\Sigma)}\, 
 D^{{}^{\rm T}}{\!}^{A_\ell}\cdots D^{{}^{\rm T}}{\!}^{A_1}\, ,
\end{equation}
where $\ell:=\frac{k-d+1}{2}$. Because this is built from {\it
  tangential} Thomas D-operators and the tangential operator
$\Pop_{\dim(\Sigma)}$, it depends only on the data $(M,\cc,\Sigma)$.
Moreover, from Theorem~\ref{ecL}, and the definition of $D^{{}^{\rm
    T}}$, it is clear that the leading derivative term of the
  operator~$\Pop_{\dim(\Sigma)}\, D^{{}^{\rm T}}{\!}^{A_\ell}\cdots
  D^{{}^{\rm T}}{\!}^{A_1}$ in the above display is a non-zero
  constant multiple of $ X^{A_\ell}\cdots X^{A_1} \,
  \big(\Delta^{\!\top}\big)^{\frac k2}\, , $ so
 \begin{equation}\label{LOTs}\Pop_{\dim(\Sigma)}\, 
 D^{{}^{\rm T}}{\!}^{A_\ell}\cdots D^{{}^{\rm T}}{\!}^{A_1}\propto X^{A_\ell}\cdots X^{A_1} \, \big(\Delta^{\!\top}\big)^{\frac k2}+{\rm LOTs}\, ,
 \end{equation}
where LOTs stands for some lower derivative operator.
 
In addition an easy calculation and induction establishes the 
following operator identity, valid acting on weight~$-\dim(\Sigma)-\ell$ tractors:
$$
D^{{}^{\rm T}}_{A_1}\cdots D^{{}^{\rm T}}_{A_{\ell}} X^{A_\ell}\cdots X^{A_1}\stackrel\Sigma=\Big[\prod_{i=1}^{\ell}i(d+2i-3)\Big] \, {\rm Id}\neq 0\, . 
$$
Hence it follows that the leading derivative term in $\Pop_{\dim(\Sigma)}\, 
 D^{{}^{\rm T}}{\!}^{A_\ell}\cdots D^{{}^{\rm T}}{\!}^{A_1}$ produces a non-zero contribution to $\Pop_k$ proportional to $(\Delta^{\!\top})^{\frac k2}$. Moreover, a weight argument shows that this is the highest possible order of derivative contribution to $\Pop_k$. It only remains, therefore, to show that lower order derivative contributions LOTs involve curvatures and therefore 
  cannot conspire in the full formula for $\Pop_k$ in  Equation~\nn{Pallk} to produce further leading order terms. For this we recall
 that the classification of conformal operators on the sphere $S^{d-1}$ ({\it cf}.~\cite{EastwoodRice,Esrni,GoSrni99}) yields the operator identity on intrinsic, weight $-\ell$ tractors 
 $$
 \bar \Pop_{d-1} \bar D^{A_\ell}\cdots \bar D^{A_1}=\alpha \, X^{A_\ell}\cdots X^{A_1}\, \bar \Pop_{d+2\ell-1}\, ,
 $$
where $\bar \Pop_k$ denotes the usual (tractor twisted) conformal Laplacian, $\bar D$ is the intrinsic Thomas D-operator and $\alpha$ is a non-zero constant. Therefore, the lower order terms LOTs in~\nn{LOTs} all involve curvatures at least linearly.
 \end{proof}

\begin{remark}
The above proof proceeds {\it mutatis mutandis}
if one wishes to  replace the tangential Thomas D-operators in Equation~\nn{Pallk} by the intrinsic Thomas D-operator twisted by the ambient tractor connection.
\end{remark}

\subsection{ASC obstruction density}\label{ASCObst}

We now derive a holographic formula giving the main structure
of the obstruction density; in particular this shows the {\it r\^ole} of the extrinsic conformal Laplacians derived above  and 
 facilitates its computation (see~\cite{GGHW} where the holographic formula is applied to volumes embedded in four-manifolds).

\begin{theorem}\label{holoB}
Let~$\bar \sigma$ be a unit conformal defining density. Then, the ASC obstruction density~$\B$
is given by the holographic formula
\begin{equation}\label{Bhol}
  \B= \frac{2}{d!(d-1)!}\, \bar D_A \Big[\Sigma^A_B\Big(\Pop_{d-1} N^B + (-1)^{d-2} \big[\bar I\cdot D^{d-2}(X^B K_{\rm ext})\big]\Big)\Big|_\Sigma\Big]\, ,
\end{equation}
where~$K_{\rm ext}:=P_{AB}P^{AB}$ and~$P^{AB}:=\hD^A\bar I^B$, and $N^B$ is any extension of the normal  tractor off~$\Sigma$. 
\end{theorem}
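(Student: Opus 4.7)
The strategy is to extract $\mathcal{B}$ from the defining identity $\bar I^2 = 1 + \bar\sigma^d B$ of Theorem~\ref{obstr} by repeated application of the Laplace--Robin operator $\bar I\cdot D$, and then to recognise the resulting hypersurface expression in terms of the extrinsic conformal Laplacian $\Pop_{d-1}$. Morally, formula~\eqref{Bhol} is what $(-1)^{d-1}(\bar I\cdot D)^{d-1}\bar I^B$ becomes along $\Sigma$, once one accounts for the failure of Leibniz for the Thomas-D operator.

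First I would apply $\hat D^A$ to both sides of $\bar I^2 - 1 = \bar\sigma^d B$. On the left, Proposition~\ref{leib-fail} applied to the product $\bar I_C \bar I^C$ yields
\[
\hat D^A \bar I^2 \;=\; 2\, P^{AC}\bar I_C \;-\; \tfrac{2}{d-2}\, X^A K_{\rm ext},
\]
producing exactly the extrinsic invariant $K_{\rm ext}=P_{AB}P^{AB}$. On the right, Lemma~\ref{actonstuff} with $T=B$ of weight $-d$ expresses $\hat D^A(\bar\sigma^d B)$ as an explicit linear combination of $\bar\sigma^{d-1}\bar I^A B$, $\bar\sigma^{d-1} X^A (\bar I\cdot D)B$, $\bar\sigma^{d-2} X^A \bar I^2 B$, and a remainder $\bar\sigma^d \hat D^A B$.

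Next I would apply the Laplace--Robin operator $\bar I\cdot D$ to this identity a total of $d-2$ further times, and restrict to $\Sigma$. By the $\mathfrak{sl}(2)$ commutator~\eqref{algebra}, each application peels off one power of $\bar\sigma$ from the right side, so after $d-2$ iterations the right-hand contribution along $\Sigma$ becomes an explicit non-zero rational multiple of $\mathcal{B}$. On the left side the same procedure produces $(\bar I\cdot D)^{d-2}$ acting on the two terms $P^{AC}\bar I_C$ and $X^A K_{\rm ext}$. Tangentiality of $\Pop_{d-1}$ (Theorem~\ref{ecL}), together with $\bar I^B|_\Sigma = N^B$ and $\bar I^2 \equiv 1$ modulo $\bar\sigma^d$, permits the identification
\[
(\bar I\cdot D)^{d-1}\bar I^B\big|_\Sigma \;=\; (-1)^{d-1}\Pop_{d-1} N^B,
\]
since derivatives of $1/\bar I^2$ contribute only at order $\bar\sigma^d$ or higher, and hence vanish along $\Sigma$ after fewer than $d$ applications of $\bar I\cdot D$. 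The second left-hand piece assembles into $(-1)^{d-2}[\bar I\cdot D^{d-2}(X^B K_{\rm ext})]|_\Sigma$. Applying $\bar D_A\circ\Sigma^A_B$ is then the natural conformally invariant way to convert the resulting ambient weight-$(1-d)$ tractor along $\Sigma$ into an intrinsic weight-$(-d)$ density.

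The main obstacle is bookkeeping the overall prefactor $\tfrac{2}{d!(d-1)!}$: it arises as the product of the factor $2$ from differentiating $\bar I^2$, the integers produced by successive uses of~\eqref{algebra} during the $d-2$ iterations (yielding a falling factorial that explains the $d!(d-1)!$ denominator), and the normalisation constant from $\bar D_A$ acting on weight-$(1-d)$ tractors. The sign $(-1)^{d-2}$ must also be tracked carefully through the factors of $-1/\bar I^2$ that distinguish $y=y_{\bar\sigma}$ from $\bar I\cdot D$ at each application; handling the non-commutativity of these factors off $\Sigma$ is precisely what the tangentiality argument of Phase~3 cleans up.
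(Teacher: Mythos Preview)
Your proposal is correct and follows essentially the same route as the paper: differentiate the defining relation $\bar I^2=1+\bar\sigma^d B$ once with a Thomas-$D$, iterate $(\bar I\cdot D)^{d-2}$ using the $\mathfrak{sl}(2)$ commutators to produce the factorial, identify $(\bar I\cdot D)^{d-1}\bar I^B|_\Sigma$ with $(-1)^{d-1}\Pop_{d-1}N^B$ via $\bar I^2=1+\mathcal{O}(\bar\sigma^d)$, and finish with $\bar D_A$. The only cosmetic differences are that the paper packages your Leibniz-failure step as the ready-made identity $I\cdot D\, I^A=\tfrac12 D^A I^2+X^A K_{\rm ext}$ and makes explicit that the final $\bar D_A\circ\Sigma^A_B$ step is just the intrinsic version of~\eqref{DXT}, $\bar D_A(\bar X^A\mathcal{B})=(d-1)\mathcal{B}$, which supplies the remaining factor of $(d-1)$ you flagged as bookkeeping.
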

\begin{proof}
First we employ the identity 
$$
I_\sigma.D I_\sigma^A = \frac{1}{2} D^A I_\sigma^2 + X^A K_{\rm ext}\, ,
$$
which is easily verified and valid for any defining density~$\sigma$.
 Next we deduce  from Lemma~\ref{actonstuff}, that
$$
D^A (\bar\si^d B)=-d (d-1) \bar \si^{d-2} X^A  B +\O(\bar \si^{d-1})\, ,
$$
where the the unit conformal defining density property has been used to replace~$\bar I^2$ by unity on the right hand side.
Hence, along~$\Sigma$ we have
\begin{equation}\label{below}
\big(\bar I\cdot D^{d-2}\circ \bar I\cdot D\big)\,  \bar I^A = -\frac{d(d-1)}{2} \big[\bar I\cdot D^{d-2},\bar \si^{d-2}\big] X^A B + \bar I\cdot D^{d-2} [X^A K_{\rm ext}]\, .
\end{equation}
Here we used $\bar I\cdot D^{d-2}\circ \bar\sigma^{d-1}\stackrel\Sigma=0$ by virtue of the algebra~\nn{sl2} and the conformal unit defining density property~\nn{I2def}.

Along~$\Sigma$, the above commutator in~\nn{below} can be replaced by $(-1)^{d-1}[x^{d-2},y^{d-2}]$ where
the operators~$x= \si$ (viewed as a multiplicative operator) and~$y=-\frac{1}{I^2} I\cdot D$, introduced in Section~\ref{sl2structure}, obey the standard~${\frak sl}(2)$ algebra
$[x,y]=h$,
for any defining density~$\sigma$.
A simple inductive argument shows that the relation
$$
[x^k,y^k]= (-1)^{k+1} k!\, h(h+1)(h+2)\cdots (h+k-1)+x \, F
$$
holds in the~${\frak sl}(2)$ enveloping algebra for some polynomial of~$F$ in the generators~$\{x,h,y\}$.
In the present case, the operator~$h$ acts on weight~$w$ tractors by multiplying by~$d+2w$.
Since~$X^AB$ has weight~$1-d$ we have $h X^A B=(2-d) X^AB$. Hence, along~$\Sigma$, we have
$$
[\bar I\cdot D^{d-2},\bar \si^{d-2}]X^A B=(d-2)! (2-d)(3-d)\cdots (-1) X^A B=(-1)^{d-2}[(d-2)!]^2X^AB\, .
$$
Finally, we apply the identity~\nn{DXT}
(specialized to  tractors along $\Sigma$)
to the quantity $\bar D_A \big[ (X^A B )|_\Sigma\big]= \bar D_A (\bar X^A B_\Sigma)$. Elementary bookkeeping then gives the quoted result.
\end{proof}

\begin{remark}
It is easily seen that the first term on
the right hand side of the holographic formula~\nn{Bhol} vanishes when
the second fundamental form is zero.  Hence, it is interesting to ask
whether the obstruction density vanishes for totally umbilic
hypersurfaces.  Indeed for embedded surfaces and volumes the second
term on the right hand side of Equation~\nn{Bhol} also vanishes.
Hence, the obstruction density is zero for totally umbilic embeddings
hypersurfaces of dimensions 2 and 3. Whether this vanishing extends to
higher dimensional hypersurfaces is an open problem.
\end{remark}


\appendix

\section{Proof of Proposition~\ref{leib-fail}} \label{FG}

First a technical Lemma.  
\begin{lemma}\label{leib0}
Let~$T_i\in \Gamma(\cT^\Phi M)[w_{i}]$ for~$i=1,2$ and~$h_{i}:=d+2w_{i}$,~$h_{12}:=d+2w_1+2w_2-2$. Then
$$
-2 X^A (D_B T_1)\,  (D^B T_2)= h_1 h_2\,  D^A(T_1T_2) -h_{12} \, \big(h_2\, (D^A T_1) T_2 + h_1 T_1 (D^A T_2)\big)\, .
$$
\end{lemma}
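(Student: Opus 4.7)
The natural setting for Lemma~\ref{leib0} is the Fefferman--Graham ambient metric construction, as signalled by the label of this appendix. Recall that on a conformal $d$-manifold $(M,\cc)$ the ambient manifold $\tilde M$ of dimension $d+2$ carries an Euler vector field $X^A$ satisfying $X^A X_A = 0$ on the null cone $Q\subset\tilde M$, together with a pseudo-Riemannian metric whose Levi-Civita connection $\tilde\nabla$ and Laplacian $\tilde\Delta$ encode the conformal structure on $M$. A weight-$w$ tractor $T\in\Gamma(\cT^\Phi M[w])$ corresponds to an ambient tensor $\tilde T$ which is homogeneous of degree $w$, i.e.\ $X^B\tilde\nabla_B\tilde T = w\tilde T$, and products of tractors correspond to products of their ambient representatives. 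Under this correspondence the Thomas D-operator assumes the compact form
$$
D_A T \ \longleftrightarrow\ (d+2w-2)\,\tilde\nabla_A\tilde T \,-\, X_A\,\tilde\Delta\tilde T\, .
$$

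My plan is to translate both sides of the claimed identity into ambient terms and compare coefficients. First I would apply the ordinary Leibniz rule to $\tilde\nabla_A(\tilde T_1\tilde T_2)$, together with the second-order Leibniz rule
$$
\tilde\Delta(\tilde T_1\tilde T_2)=(\tilde\Delta\tilde T_1)\tilde T_2+\tilde T_1(\tilde\Delta\tilde T_2) + 2(\tilde\nabla_B\tilde T_1)(\tilde\nabla^B\tilde T_2)\, ,
$$
to expand $D^A(T_1T_2)$. The cross term $2(\tilde\nabla_B\tilde T_1)(\tilde\nabla^B\tilde T_2)$, contracted with $-X^A$, is exactly what produces the failure-of-Leibniz term on the right-hand side of the Lemma. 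In parallel I would distribute $(D_B T_1)(D^B T_2)$ ambiently: of the four resulting terms, the one quadratic in $X$ vanishes by $X\cdot X=0$ on $Q$, while the two mixed terms simplify via Euler homogeneity $X^B\tilde\nabla_B\tilde T_i = w_i\tilde T_i$, leaving the cross gradient $(\tilde\nabla_B\tilde T_1)(\tilde\nabla^B\tilde T_2)$ together with tails proportional to $w_i\tilde T_i\,\tilde\Delta\tilde T_j$.

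The remainder is algebraic bookkeeping: collect coefficients in front of the independent ambient structures $\tilde\nabla_A\tilde T_i \cdot \tilde T_j$, $X_A(\tilde\Delta\tilde T_i)\tilde T_j$, and $X_A(\tilde\nabla_B\tilde T_1)(\tilde\nabla^B\tilde T_2)$ on both sides, and verify that they match. The scalar multipliers in front of $D^A(T_1T_2)$, $(D^A T_1)T_2$ and $T_1(D^A T_2)$ are precisely engineered so that all $\tilde\nabla_A\tilde T_i$ and $X_A\tilde\Delta\tilde T_i$ contributions cancel; the key algebraic identity driving this is $h_{12}-(h_i-2)=2w_j$ for $\{i,j\}=\{1,2\}$.

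The main obstacle is exactly this bookkeeping of weight factors, which must balance a delicate interplay between the $(d+2w_i-2)$ prefactors coming from the ambient form of $D$ and the Euler eigenvalues $w_i$. As a consistency check, the identity can equivalently be established in a chosen scale $g\in\cc$ using the explicit three-slot formula~\nn{Dform} for $[D^A]_g$ and matching the top, middle and bottom components separately; the top slot, where each term is a scalar multiple of $T_1T_2$, is where the coefficient juggling is most transparent and provides a quick sanity check on the numerology.
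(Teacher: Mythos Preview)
Your proposal is correct and takes essentially the same approach as the paper: both proofs pass to the Fefferman--Graham ambient construction, write the Thomas D-operator as $(d+2w-2)\tilde\nabla_A - X_A\tilde\Delta$, and reduce the identity to the ordinary Leibniz rules for $\tilde\nabla$ and $\tilde\Delta$ together with Euler homogeneity and $X^2=0$ on the null cone. The only cosmetic difference is that the paper organises the bookkeeping by multiplying $D^A(T_1T_2)$ through by $(h_1-2)(h_2-2)$ and rewriting in terms of $D^A T_i$, whereas you propose expanding both sides separately and matching coefficients; the content is the same.
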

\begin{proof}
To prove Lemma~\ref{leib0} we employ the Fefferman--Graham ambient
metric construction of the standard tractor
bundle~\cite{CapGoamb,GoPetCMP}. Our notations are those of~\cite[Section~6]{GW}. In particular, the Thomas D-operator is a restriction
of the following operator on sections of the ambient tensor bundle:
$$
\aD_A= \aNd_A(d+2\aNd_{\sX}-2) +\aX_A\, \bs{\Delta}.
$$
Acting on a product of ambient tensors~$\tilde T_1\tilde T_2$ of homogeneities~$w_1$ and~$w_2$ (so that~$(\aNd_{\sX}-w_1)\, \tilde  T_1=0=
(\aNd_{\sX}-w_2)\, \tilde  T_2$) we have
\begin{eqnarray*}
\aD_A \big(\tilde T_1 \tilde T_2\big)&=&(d+2w_1+2w_2-2)\big((\aNd_A \tilde T_1)\, \tilde T_2+\tilde T_1\, (\aNd_A \tilde T_2)\big)\\[.5mm]
&-&\aX_A\big((\bs{\Delta} \tilde T_1)\, \tilde T_2 + 2 \, (\aNd_B \tilde T_1)(\aNd^B \tilde T_2)+\tilde T_1\, (\bs{\Delta} \tilde T_2)\big)\, ,
\end{eqnarray*}
so that
\begin{equation*}
\begin{aligned}
(d+&2w_1-2)(d+2w_2-2)\aD_A \big(\tilde T_1 \tilde T_2\big)\\[3mm]
&=(d+2w_1+2w_2-2)\big((d+2w_2-2)\, (\aD_A \tilde T_1)\, \tilde T_2+(d+2w_1-2)\, \tilde T_1\, (\aD_A \tilde T_2)\big)\\[.5mm]
&+(d+2w_1+2w_2-2)\, \aX_A\big((d+2w_2-2)\, (\bs{\Delta} \tilde T_1)\, \tilde T_2 + (d+2w_1-2)\,  \tilde T_1\, (\bs{\Delta} \tilde T_2)\big)\\[.5mm]
&-(d+2w_1-2)(d+2w_2-2)\, \aX_A\big((\bs{\Delta} \tilde T_1)\, \tilde T_2 + 2 \, (\aNd_B \tilde T_1)(\aNd^B \tilde T_2)+\tilde T_1\, (\bs{\Delta} \tilde T_2)\big)\\[3mm]
&=(d+2w_1+2w_2-2)\big((d+2w_2-2)\, (\aD_A \tilde T_1)\, \tilde T_2+(d+2w_1-2)\, \tilde T_1\, (\aD_A \tilde T_2)\big)\\[.5mm]
&-2\aX_A\, (\aD_B \tilde T_1)(\aD^B \tilde T_2) + {\cO}(\aX^2)\, .
\end{aligned}
\end{equation*}
\end{proof}

\newcommand{\msn}[2]{\href{http://www.ams.org/mathscinet-getitem?mr=#1}{#2}}
\newcommand{\hepth}[1]{\href{http://arxiv.org/abs/hep-th/#1}{arXiv:hep-th/#1}}
\newcommand{\maths}[1]{\href{http://arxiv.org/abs/math/#1}{arXiv:math/#1}}
\newcommand{\mathph}[1]{\href{http://lanl.arxiv.org/abs/math-ph/#1}{arXiv:math-ph/#1}}
\newcommand{\arxiv}[1]{\href{http://lanl.arxiv.org/abs/#1}{arXiv:#1}}


\begin{thebibliography}{99}


\bibitem{Alexakis}
\msn{2653898}{S. Alexakis and R. Mazzeo},
     {\em Renormalized area and properly embedded minimal surfaces in
              hyperbolic 3-manifolds}, {Comm. Math. Phys.},
   {\bf 297}, 621--651 (2010).
Cited on page  
   
\bibitem{ACF} L.~Andersson, P.~Chru\'sciel and
  H.~Friedrich, {\em On the Regularity of solutions to the Yamabe
    equation and the existence of smooth hyperboloidal initial data
    for Einstein's field equations,}  Commun.\ Math.\ Phys.\ {\bf 149},
  587--612 (1992). 
  \arxiv{0802.2250}
  Cited on page 


\bibitem{Gibbons} 
  A.~F.~Astaneh, G.~Gibbons and S.~N.~Solodukhin,
  {\em What surface maximizes entanglement entropy?,}
  Phys.\ Rev.\ D {\bf 90}, 085021--085031 (2014),
  \arxiv{1407.4719}.
Cited on page 

\bibitem{AMO1}
P. Aviles and R.C. McOwen, {\em Complete conformal metrics with negative scalar
curvature in compact Riemannian manifolds.} Duke Math. J., 56, 395--398 (1988)
Cited on page

\bibitem{AMO2}
P. Aviles and R.C. McOwen,
{\em Conformal deformation to constant negative scalar curvature on noncompact Riemannian manifolds.} J. Differential Geom., 27, 225--239 (1988).
 Cited on page

\bibitem{AtiyahBottPatodi}
 \href{http://www.ams.org/mathscinet-getitem?mr=2051616}{M. Atiyah, R. Bott and V.K. Patodi}
 {\em On the Heat Equation and the Index Theorem}, Inventiones Math. {\bf 19} 279--330 (1973).
 Cited on page
 
 \bibitem{BEG} \href{http://www.ams.org/mathscinet-getitem?mr=1322223}{T.N.\ Bailey, M.G.\ Eastwood, and A.R.\ Gover,} {\em
    Thomas's structure bundle for conformal, projective and related
    structures}, Rocky Mountain J.\ Math.\ {\bf 24},
  1191--1217  (1994). Cited on pages
  

\bibitem{BC} B.D. Boe and D.H. Collingwood, {\em A comparison
    theory for the structure of induced representations}. J.\ Alg.
  {\bf 94}, 511--545 (1985).
Cited on page



\bibitem{BrGoCNV}
\msn{1867890}{T. Branson and A.R. Gover}, {\em Conformally invariant non-local operators}, Pacific J. Math. {\bf 201}, 19--60 (2001).
Cited on pages


  \bibitem{CapGoirred}\msn{2247867}{A.\ \v Cap, and A.R.\ Gover,} {\em Tractor
       bundles for irreducible parabolic geometries.  Global analysis
       and harmonic analysis}, S\'emin. Congr. {\bf 4}, 129, Soc. Math.
     France 2000. Cited on page

\bibitem{CapGoTAMS} \msn{1873017}{A.\ \v Cap, and A.R.\ Gover,} {\em Tractor calculi
    for parabolic geometries}, Trans.\ Amer.\ Math.\ Soc.\ {\bf 354}
  (2002), 1511--1548. Cited on page

 \bibitem{CapGoamb} \msn{1996768}{A.\ \v Cap, and A.R.\ Gover,} {\em Standard
     tractors and the conformal ambient metric construction},  Ann.\
     Global Anal.\ Geom.\  {\bf 24} (2003), 231--295, \maths{0207016}. Cited on page
    
     
 

\bibitem{CapGoH-duke} A.~{\v{C}}ap, A.R. Gover, and M.~Hammerl, {\em Holonomy
  reductions of Cartan geometries and curved orbit
  decompositions}, Duke Math. J. 163 (2014), no. 5, 1035--1070.
Cited on page

\bibitem{ChengYau} 
S.-Y. Cheng and S.-T. Yau,
{\em
On the existence of a complete K\"ahler metric on non-compact complex manifolds and the regularity of Fefferman's equation},
Comm. Pure  Appl. Math. {\bf 33} (1980), 507--544. 
Cited on page


\bibitem{cherrier} P.\ Cherrier, {\em Probl\`emes de Neumann non
lin\'eaires sur les vari\'et\'es riemanniennes}, J.\ Funct.\ Anal.\
{\bf 57} (1984), 154--206.
Cited on page


\bibitem{CurryG} {S.\ Curry, A.R. Gover,} {\em An introduction to
  conformal geometry and tractor calculus, with a view to applications
  in general relativity}, \arxiv{1412.7559}
Cited on page

\bibitem{EastwoodRice} 
  \href{http://www.ams.org/mathscinet-getitem?mr=880414}{M.G.~Eastwood and J.W.~Rice}, {\em Conformally invariant differential operators on Minkowski space and their curved analogues},
  Comm. Math. Phys. {\bf 109}, 207 (1987); {\it loc. cit.} {\bf 144}, 213, 1992. Cited on pages

 \bibitem{Esrni} \msn{1463509}{M.G.~Eastwood,}\ {\em Notes on conformal differential
  geometry},  
  Supp.\ Rend.\ Circ.\ Matem.\ Palermo, Ser.\ II, Suppl.,\ {\bf 43},
  (1996), 57--76. Cited on pages 

\bibitem{EastSlov}
\href{http://www.ams.org/mathscinet-getitem?mr=1483772}{M. Eastwood and J. Slov{\'a}k},
{\em Semiholonomic {V}erma modules}, {J. Algebra}, {\bf 197},  
(1997),
{424--448}.
Cited on page


\bibitem{F1} C.L. Fefferman,  
{\em Monge-Amp\`ere equations, the Bergman kernel, and geometry of 
pseudoconvex domains}, Ann. of Math. {\bf 103} (1976), 395--416. Correction, Ann. of Math. {\bf 104} (1976) 393--394.
  

 \bibitem{FGast} \href{http://www.ams.org/mathscinet-getitem?mr=837196}{C.\ Fefferman, and C.R.\ Graham,} {\em Conformal
    invariants} in: The mathematical heritage of \'{E}lie Cartan (Lyon,
  1984).  Ast\' erisque 1985, Numero Hors Serie, 95--116.  Cited on pages


\bibitem{FGrnew} \msn{2858236}{C.\ Fefferman, and C.R.\ Graham}, {\em The Ambient Metric},
  Annals of Mathematics Studies, {\bf 178}, Princeton University Press, \arxiv{0710.0919}. Cited on pages


\bibitem{GGHW} M.~Glaros, A.R.~Gover, M~Halbasch, and A.~Waldron,
{\em Variational calculus for hypersurface functionals:
singular Yamabe problem Willmore energies},
 arXiv:1508.01838  Cited on pages

\bibitem{GoSrni99} A.R. Gover, {\em  Aspects of parabolic
  invariant theory}, Rend.\ Circ.\ Mat.\ Palermo (2) Suppl.\  No. 59 (1999),
  25--47. Cited on pages


\bibitem{GoAdv}
A.R. Gover, {\em Invariant theory and calculus for conformal geometries}, Adv. Math. {\bf 163} (2001), 206. Cited on pages


\bibitem{powerslap} \href{2244375}{A.R. Gover,} {\em Laplacian operators and $Q$-curvature
  on conformally Einstein manifolds}, Math.\ Ann. {\bf 336} (2006),
  311--334, \maths{0506037}. Cited on page
  
  
  
\bibitem{GoIP} \href{http://www.ams.org/mathscinet-getitem?mr=2366922}{A.R. Gover,} {\em Conformal Dirichlet-Neumann Maps and
    Poincar\'e-Einstein Manifolds}, SIGMA Symmetry Integrability Geom.\
  Methods Appl.\  {\bf 3} (2007), Paper 100, \arxiv{0710.2585}. Cited on page



\bibitem{Goal} \href{http://www.ams.org/mathscinet-getitem?mr=2587388}{A.R. Gover,} {\em Almost Einstein and
    Poincar\'e-Einstein manifolds in Riemannian signature},   J.\
    Geometry and Physics, {\bf 60}, 182--204 (2010), \href{http://lanl.arxiv.org/abs/0803.3510}{arXiv:0803.3510}.
Cited on pages

\bibitem{GoHirachi}
\msn{2051616}{A.R.
Gover and K. Hirachi}, 
{\em Conformally invariant powers of the Laplacian--a complete nonexistence theorem}, 
J. Amer. Math. Soc. {\bf 17}, 389--405 (2004). 
Cited on page 




\bibitem{GLW}
A.R. Gover, E. Latini and  A. Waldron, 
{\em Poincar\'e-Einstein holography for forms via conformal geometry in the bulk},
Memoirs of the AMS, Volume 235, Number 1106,  arXiv:1205.3489. Cited on pages 




\bibitem{GoPetCMP} \msn{1822358}{A.R.\ Gover, and L.\ Peterson,} {\em Conformally
    invariant powers of the Laplacian,~$Q$-curvature, and tractor
    calculus} Comm.\ Math.\ Phys.\  {\bf 235},  339--378  (2003), \mathph{0201030}. 
    Cited on pages
  
\bibitem{GP-obstrn} 
A.R.~Gover, L.~J.~Peterson, Lawrence,
{\em The ambient obstruction tensor and the conformal deformation complex},
Pacific J. Math. {\bf 226}. 309--351  (2006). Cited on pages 
    


\bibitem{GW} A.R.~Gover, and A.~Waldron, {\em Boundary calculus for conformally compact manifolds}, 
 Indiana Univ. Math. J. {\bf 63}, 119--163  (2014),
  \arxiv{1104.2991}. Cited on pages
 


\bibitem{Us} 
  A.R.~Gover and A.~Waldron,
  {\em Generalising the Willmore equation: submanifold conformal invariants from a boundary Yamabe problem,} CRM Barcelona, Conference on Geometrical Analysis, Extended Abstract, 
  \arxiv{1407.6742}.
 Cited on page


\bibitem{volume}
A.R. Gover and A.~Waldron, \textsl{Renormalized Volume},
\arxiv{1603.07367}. Cited on page

\bibitem{GW-willII} A.R.~Gover and A.~Waldron, {\em 
A calculus for conformal hypersurfaces and new higher Willmore energy functionals}, \arxiv{xxx.xxxxx}.
Cited on pages

\bibitem{Grnon}
\msn{1190439}{C.R.  Graham}  {\em Conformally invariant powers of the Laplacian. II. Nonexistence}, J. London Math. Soc. (2) {\bf 46}, 566--576  (1992). 
Cited on page
  

\bibitem{Gra} C.R. Graham,
\textsl{Volume renormalization for singular Yamabe metrics},
\arxiv{1606.00069}. Cited on page


 \bibitem{GrHir}
{C.R.~Graham and K.~Hirachi,}
\textsl{The ambient obstruction tensor and $Q$-curvature}, in  \textsl{AdS/CFT correspondence: Einstein metrics and their conformal boundaries}, 
IRMA Lect. Math. Theor. Phys. {\bf 8}, 59--71, European Math. Society, Z\"urich, 2005, \maths{0405068}. 

\bibitem{GJMS}
\msn{1190438}{C.R.~Graham, R.~Jenne, Ralph, L.~Mason and G.~Sparling,}
{\em Conformally invariant powers of the Laplacian. I. Existence}, 
J. London Math. Soc. (2) {\bf 46} (1992), 557--565.
Cited on page(s):  

  

\bibitem{GrahamWitten} 
  C.R.~Graham and E.~Witten,
  {\em Conformal anomaly of submanifold observables in AdS / CFT correspondence,}
  Nucl.\ Phys.\ B {\bf 546}, 52--64 (1999),
  \hepth{9901021}.
Cited on page
 

\bibitem{Grant} D.~Grant, {\em A conformally invariant third order Neumann-type operator for hypersurfaces}, \href{http://www.math.auckland.ac.nz/mathwiki/images/5/51/GrantMSc.pdf}{M.Sc. Thesis}, University of Auckland, 2003; http://www.math.auckland.ac.nz/mathwiki/images /5/51/GrantMSc.pdf. 
 Cited on page


              
              

    
\bibitem{JV}
H.P. Jakobsen and M. Vergne, {\em Wave and Dirac Operators, and representations of the conformal group}, J. Funct. Anal. {\bf 24}, 52--106  (1977). 
Cited on page



\bibitem{Joung} 
  E.~Joung, M.~Taronna and A.~Waldron,
 {\em A Calculus for Higher Spin Interactions,}
  JHEP {\bf 1307} (2013), 186--210,
  \arxiv{1305.5809}.  
  Cited on page 
  
  
\bibitem{LeB-Heaven}
\msn{0652038}{C. R. LeBrun,} {\em~$\mathscr{H}$-Space with a Cosmological Constant},
Proc. R. Soc. Lond.  A {\bf  380},171--185  (1982). Cited on page

\bibitem{Lepowsky}
J. Lepowsky {\em A generalization of the Bernstein--Gelfand--Gelfand resolution}, J. Algebra {\bf 49}, 496--511  (1977). Cited on page 

\bibitem{AMO} C. Loewner and L. Nirenberg. {\em Partial Differential Equations Invariant under Conformal or Projective Transformations} in: contributions to Analysis, Academic Press, New York, 1974. Cited on page


\bibitem{Marques}
\msn{3152944}{F.C. Marques and A. Neves}, {\em Min-max theory and the {W}illmore conjecture},
 Ann. of Math. (2),
   {\bf 179},
    683--782 (2014). Cited on page
    
\bibitem{ot} \msn{917488}{R.\ Penrose, W.\ Rindler,} {\em Spinors and
  space-time. Vol. 1. Two-spinor calculus and relativistic
  fields}, Cambridge Monographs on Mathematical Physics. Cambridge
  University Press, Cambridge, 1984. Cited on page

\bibitem{Polyakov} 
  A.M.~Polyakov,
  {\em Fine Structure of Strings,}
  Nucl.\ Phys.\ B {\bf 268}, 406-412 (1986).
  Cited on page

\bibitem{Riviere}
\msn{2430975}
   {T. Rivi{\`e}re},
    {\em Analysis aspects of {W}illmore surfaces},
   {Invent. Math.},
  {\bf 174}, 1--45
      (2008).
Cited on page

\bibitem{RyuT1} 
  S.~Ryu and T.~Takayanagi,
  {\em Holographic derivation of entanglement entropy from AdS/CFT,}
  Phys.\ Rev.\ Lett.\  {\bf 96}, 181602 (2006),
  \hepth{0603001}.
  Cited on page
  
  \bibitem{RyuT2} 
  S.~Ryu and T.~Takayanagi,
  {\em Aspects of Holographic Entanglement Entropy,}
  JHEP {\bf 0608}, 045 (2006),
  \hepth{0605073}.
Cited on page



\bibitem{Stafford} R.~Stafford, {\em Tractor Calculus and Invariants for Conformal Sub-Manifolds}, \href{www.math.auckland.ac.nz/mathwiki/images/c/cf/StaffordMSc.pdf}{M.Sc. Thesis}, University of Auckland, 2005; www.math.auckland.ac.nz/mathwiki/images/c/cf/StaffordMSc.pdf.
 Cited on page

\bibitem{Thomas} T.Y.\ Thomas, {\em  On conformal geometry},
  Proc.\ Natl.\ Acad.\ Sci.\ USA
  {\bf 12}, 352--359 (1926).
  Cited on pages

\bibitem{YuriThesis} Y. Vyatkin,
\href{http://librarysearch.auckland.ac.nz/UOA2_A:uoa_alma21234030790002091}{\em Manufacturing conformal invariants of hypersurfaces},
Ph.D. Thesis, University of Auckland,
(2013).
Cited on pages 

\bibitem{Wald}
R. M. Wald, {\em General Relativity}, 
University of Chicago Press, 2010.
Cited on page

\bibitem{Willmore}
\msn{0202066}{T.J. Willmore},
    {\em Note on embedded surfaces},
  An. \c Sti. Univ. ``Al. I. Cuza'' Ia\c si Sec\c t. I a Mat. (N.S.) {\bf 11B}, 493--496 (1965).
 Cited on page    
 
\end{thebibliography}
\end{document}